\DeclareMathOperator{\Tr}{Tr}
\DeclareMathOperator{\diag}{diag}
\DeclareMathOperator{\MSE}{MSE}
\newcommand*{\symm}{\textbf{${\mathcal{S}_d }$}}
\newcommand*{\asymm}{\textbf{${\mathcal{A}_d }$}}
\newcommand*{\genm}{\textbf{${\mathcal{M}_d }$}}
\newcommand*{\dpos}{\textbf{${\mathcal{S}_d^{+,*}}$}}
\newcommand*{\posm}{\textbf{${\mathcal{S}_d^+}$}}
\newcommand*{\rbi}{\overline R_{\infty}}
\newcommand*{\qbi}{\overline Q_{\infty}}
\newcommand*{\E}{\mathbb{E}}
\newcommand*{\R}{\mathbb{R}}
\newcommand*{\N}{\mathbb{N}}
\newcommand*{\cF}{\mathcal{F}}
\newcommand*{\cL}{\mathcal{L}}
\newcommand*{\Px}{\mathbb{P}}
\newcommand*{\tp}{^\top}
\newcommand*{\geb}{\geqslant}
\newcommand*{\leb}{\leqslant}
\newenvironment{alphafootnotes}
  {\par\edef\savedfootnotenumber{\number\value{footnote}}
   
   \setcounter{footnote}{0}}
  {\par\setcounter{footnote}{\savedfootnotenumber}}
\newtheorem{mytheo}{Theorem}[section]
\newtheorem{prop}{Proposition}[section]
\newtheorem{lemme}{Lemma}[section]
\newtheorem{coro}{Corollary}[section]
\newtheorem{rmrk}{Remark}[section]
\begin{document}
\title{Maximum Likelihood Estimation for Wishart processes}
\author[1]{Aur\'elien Alfonsi}
\author[2]{Ahmed Kebaier}
\author[1]{Cl\'ement Rey}
\affil[1]{CERMICS, \'Ecole des Ponts, UPE, Projet MathRisk ENPC-INRIA-UMLV, Champs-sur-Marne, France}
\affil[2]{Universit\'{e} Paris 13, Sorbonne Paris Cit\'{e}, LAGA, CNRS (UMR 7539)}
\maketitle
\begin{alphafootnotes}
\footnote{e-mails : alfonsi@cermics.enpc.fr, kebaier@math.univ-paris13.fr, reyc@cermics.enpc.fr. This research benefited from the support of the ``Chaire Risques Financiers'', Fondation du Risque and the Laboratories of Excellence B\'ezout http://bezout.univ-paris-est.fr/ and MME-DII http://labex-mme-dii.u-cergy.fr/ .}
\end{alphafootnotes}

\begin{abstract}
In the last decade, there has been a growing interest to use Wishart processes for modelling, especially for financial applications. However, there are still few studies on the estimation of its parameters. Here, we study the Maximum Likelihood Estimator (MLE) in order to estimate the drift parameters of a Wishart process. We obtain precise convergence rates and limits for this estimator in the ergodic case and in some nonergodic cases.  We check that the MLE achieves the optimal convergence rate in each case. Motivated by this study, we also present new results on the Laplace transform that extend the recent findings of Gnoatto and Grasselli~\cite{GnoattoGrasselli} and are of independent interest. 
\end{abstract}

\noindent {\bf Keywords :} Wishart processes, Laplace transform, parameter inference, maximum likelihood, limit theorems, local asymptotic properties. \\
{\bf AMS MSC 2010:} 62F12, 44A10, 60F05, 91B70.

\section{Introduction and preliminary results}

The goal of this paper is to study the maximum likelihood estimation of the parameters of Wishart processes. These processes have been introduced by Bru~\cite{Bru} and take values in the set of positive semidefinite matrices. Let  $d \in \N^*$ denote the dimension,  $\mathcal{M}_d$ be the set of real $d$-square matrices, $\posm$ (resp. $\dpos$) be the subset of positive semidefinite (resp. definite) matrices, $\symm$ (resp. $\asymm$) the subset of symmetric (resp. antisymmetric) matrices. Wishart processes are defined by the following SDE
\begin{eqnarray}\label{SDE_Intro}
\left\{
    \begin{array}{ll}
        dX_t=\left[\alpha a\tp  a+bX_t+X_tb\tp  \right]dt+ \sqrt{X}_tdW_t a+a\tp  dW_t \tp \sqrt{X}_t , \quad t>0 \\
        X_0=x \in \posm,
    \end{array}
\right.
\end{eqnarray}
where $\alpha \geqslant d-1$, $ a \in \mathcal{M}_d$, $b \in \mathcal{M}_d$ and   $(W_t)_{t \geqslant 0}$ denotes a $d$-square matrix made of independent Brownian motions. We recall that for $x \in \posm$, $\sqrt{x}$ is the unique matrix in $\posm$ such that $\sqrt{x}^2=x$. It is shown by Bru~\cite{Bru} and Cuchiero et al.~\cite{CFMT} in a more general affine setting that the SDE~\eqref{SDE_Intro} has a unique strong solution when $\alpha \geqslant d+1$ and a unique weak solution when  $\alpha \geqslant d-1$. Besides, we have $X_t\in \dpos$ for any $t\geb 0$ when $x\in \dpos$ and $\alpha \geb d+1$. In this paper, we will denote by $WIS_d(x,\alpha,b,a)$ the law of $(X_t,t\geb 0)$ and  $WIS_d(x,\alpha,b,a;t)$ the law of $X_t$. In dimension~$d=1$, Wishart processes are known as Cox-Ingersoll-Ross processes in the literature. It is worth recalling that the law of $X$ only depends on $a$ through $a\tp  a$ since we have
$$WIS_d(x,\alpha,b,a)\underset{law}{=} WIS_d(x,\alpha,b,\sqrt{a\tp a}),$$
see e.g. equation~(12) in~\cite{AA2013}. Therefore, the parameters to estimate are $\alpha$, $b$ and $a\tp a$.

Wishart processes have been originally considered by Bru~\cite{Bruthesis}  to model some biological data. Recently, they have been widely used in financial models in order to describe the evolution of the dependence between assets. Namely, Gourieroux and Sufana~\cite{GourierouxSufana} and Da Fonseca et al.~\cite{Dafonseca} have proposed a stochastic volatility model for  a basket of assets that assumes that the instantaneous covariance between the assets follows a Wishart process. This extends the well-known Heston model~\cite{Heston} to many assets. Wishart processes have also been used for interest rates models. Affine term structure models involving these processes have been proposed for example by Gourieroux and Sufana~\cite{GourierouxSufana2}, Gnoatto~\cite{Gnoatto} and Ahdida et al.~\cite{AAP}. For these models, the question of estimating the parameters of the underlying Wishart process may be important for practical purposes and should be possible thanks to the profusion of financial data. This issue has been considered by Da Fonseca et al.~\cite{DFGI} for the model presented in~\cite{Dafonseca}. However, there is no dedicated study on the Maximum Likelihood Estimator (MLE) for Wishart processes. For the Cox-Ingersoll-Ross process, the estimation of parameters has been studied earlier, motivated in particular by its use for interest rates (see Fournié and Talay~\cite{FournieTalay}). Later on, the MLE has been studied by Overbeck~\cite{Overbeck} including some nonergodic cases, and more recently by Ben Alaya and Kebaier~\cite{BK2011,Alaya_Kebaier_2013}. This paper completes the literature by studying the MLE for Wishart processes.

In this paper, we will follow the theory developed in the books by Lipster and Shiryaev~\cite{LS_Book} and Kutoyants~\cite{KutoyantsBook} and assume that we observe the full path $(X_t, t\in [0,T])$ up to time~$T>0$. This choice will be convenient from a mathematical point of view to study the convergence of the MLE. Of course, in practice it can be relevant to study precisely the estimation when we only observe the process on a discrete time-grid. This is left for further research, but we already observe in our numerical experiments that the discrete approximation of the MLE gives a satisfactory estimation of Wishart parameters (see Section~\ref{Sec_Num_Study}). It is worth noticing that once we observe the path $(X_t, t\in [0,T])$, the parameter~$a\tp a$ is known. In fact, we can calculate the quadratic covariation (see for example Lemma 2 in~\cite{AA2013}) and get for $i,j,k,l \in  \{1, \cdots,d \}$
\begin{align}
\label{eq:crochet_general_de_X}
\langle X_{i,j}, X_{k,l} \rangle_T =&\int_0^T (a\tp a )_{j,l}(X_s)_{i,k}+(a\tp a )_{j,k}(X_s)_{i,l} +(a\tp a )_{i,l}(X_s)_{j,k}+(a\tp a )_{i,k}(X_s)_{j,l}ds.
\end{align}
This leads to 
\begin{align}
\label{eq:estim_crochet_a}
(a\tp a)_{i,i}&=\frac{1}{4} \langle X_{i,i}\rangle_T  \Big( \int_0^T (X_s)_{i,i} ds \Big)^{-1}, \\
(a\tp a)_{i,j}&=\left(\frac{1}{2} \langle  X_{i,j}, X_{i,i}\rangle_T - (a\tp a)_{i,i} \int_0^T (X_s)_{i,j} ds \right)\Big( \int_0^T (X_s)_{i,i} ds \Big)^{-1}, \nonumber
\end{align}
for $1\leb i,j\leb d$ and $j\not=i$. We note that these quantities are well defined as soon as the path $(X_t, t\in [0,T])$ has a finite quadratic variation and is such that $X_t\in \dpos$ $dt$-a.e., which is satisfied by the paths of Wishart processes (see Proposition~4 in~\cite{Bru}). We will assume that $a\tp a \in \dpos$ and denote by $a\in \genm$ an invertible matrix that matches the observed value of $a\tp a $: $a$ can be for example the square root of $a\tp a$ or the Cholesky decomposition of $a\tp a $. Then, we know that 
$Y_t=(a\tp)^{-1}X_ta^{-1}$ follows the law  $WIS_d((a\tp)^{-1}xa^{-1},\alpha,(a\tp)^{-1}ba\tp,I_d)$, see e.g. equation~(13) in~\cite{AA2013}. It is therefore sufficient to focus on the estimation of the parameters $\alpha$ and $b$ when $a=I_d$, which we consider now.

We first present the MLE of $\theta=(b,\alpha)$, and we denote by $\Px_\theta$   the original probability measure under which $X$ satisfies
\begin{equation}\label{SDE_Intro2}
        dX_t=\left[\alpha I_d +bX_t+X_tb\tp  \right]dt+ \sqrt{X}_tdW_t + dW_t \tp \sqrt{X}_t .
\end{equation}
When no confusion is possible, we also denote $\Px$ this probability.
We consider $\alpha_0 \geb d+1$ and set $\theta_0=(\alpha_0,0)$. We will assume for the joint estimation of $\alpha$ and $b$ that
\begin{equation}\label{cond_estim_j}
\alpha \geb d+1 \text{ and } x\in \dpos.
\end{equation}
The latter assumption is not restrictive in practice since the condition $\alpha \geb d+1$ ensures that $X_t \in \dpos$ for any $t>0$. Due to this assumption, we know by Theorem~4.1 in Mayerhofer~\cite{MayerhoferHammamet} that
\begin{align*}
\frac{d {\mathbb{P}}_{\theta_0,T}}{d\mathbb{P}_{\theta,T}}
&:=\exp\left({\int_0^T \Tr[H_sdW_s]-\frac{1}{2}\int_0^T \Tr[H_sH_s\tp ]ds}\right), \text{ with }
H_t=\frac{\alpha_0-\alpha}{2}(\sqrt{X_t})^{-1} -b \sqrt{X_t}
\end{align*}
defines a probability measure under which $\tilde{W}_t=W_t-\int_0^t H\tp_sds$ is a $d \times d$-Brownian motion, where $\mathbb{P}_{\theta,T}$ is the restriction of $\mathbb{P}_{\theta}$ to the $\sigma$-algebra $\sigma(W_s,s\in[0,T])$. We have
$$dX_t=\alpha_0 I_d dt+ \sqrt{X}_td\tilde{W}_t + d\tilde{W}_t \tp \sqrt{X}_t, $$
and the likelihood is then defined 
by (see Lipster and Shiryaev~\cite{LS_Book}, Chapter 7)
\begin{equation}\label{def_likelihood}L_T^{\theta,\theta_0}=\frac{1}{\E\left[ \exp\left({\int_0^T \Tr[H_sdW_s]-\frac{1}{2}\int_0^T \Tr[H_sH_s\tp ]ds}\right)\bigg| \cF^X_T \right]},
\end{equation}
where  $(\cF^X_t)_{t\geb 0}$ denote the filtration generated by the process~$X$.
\begin{prop}\label{prop_MLE_bgen} For $X\in \dpos$, let $\cL_X:\symm \rightarrow\symm$ be the linear application defined by $\cL_X(Y)=XY+YX$. It is invertible, and the likelihood of $(X_t,t\in[0,T])$ is given by
\begin{multline}
L_T^{\theta,\theta_0}=\exp\Bigl(\frac{\alpha-\alpha_0}{4}\log\left(\frac{\det[X_T]}{\det[x]}\right)
-\frac{\alpha-\alpha_0}{4}\big( \frac{\alpha+\alpha_0}{2}-1-d \big)\int_0^T \Tr[X_s^{-1}]ds-\frac{\alpha T}{2}\Tr[b] \\ +\frac{1}{2} \int_0^T \Tr\left[\mathcal{L}_{X_t}^{-1}\left(bX_t+X_tb\tp \right) dX_t \right]  -\frac{1}{4}  \int_0^T \Tr\left[\mathcal{L}_{X_t}^{-1}\left(bX_t+X_tb\tp \right)  (bX_t+X_tb\tp) \right]dt  \Bigr). \label{vs_gen_bgen}
\end{multline}
\end{prop}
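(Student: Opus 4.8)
The plan is to recognise $L_T^{\theta,\theta_0}$ as the likelihood ratio $d\Px_\theta/d\Px_{\theta_0}$ of the \emph{observed} diffusion $X$ and to obtain it by projecting the Girsanov density of~\eqref{def_likelihood} onto $\cF^X_T$. Writing $\tilde M_t:=X_t-x-\alpha_0\int_0^t I_d\,ds=\int_0^t(\sqrt{X_s}\,d\tilde W_s+d\tilde W_s\tp\sqrt{X_s})$ for the $\Px_{\theta_0}$-martingale part of $X$, and substituting $dW=d\tilde W+H\tp ds$ into the given density, one gets $d\Px_\theta/d\Px_{\theta_0}=\exp(-\int_0^T\Tr[H_s\,d\tilde W_s]-\tfrac12\int_0^T\Tr[H_sH_s\tp]\,ds)$ under $\Px_{\theta_0}$; a Bayes argument then identifies $L_T^{\theta,\theta_0}=\E_{\theta_0}[d\Px_\theta/d\Px_{\theta_0}\mid\cF^X_T]$. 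First I would record two structural facts. Invertibility of $\cL_X$ is immediate from diagonalising $X=O\,\diag(\lambda_1,\dots,\lambda_d)\,O\tp$ with $\lambda_i>0$: the eigenvalues of $\cL_X$ on $\symm$ are $\lambda_i+\lambda_j>0$. A direct computation from~\eqref{eq:crochet_general_de_X} with $a=I_d$ shows that, for $Y,Z\in\symm$, $\Tr[Y\,d\tilde M_t]=2\Tr[Y\sqrt{X_t}\,d\tilde W_t]$ and $d\langle\Tr[Y\tilde M],\Tr[Z\tilde M]\rangle_t=2\Tr[Y\cL_{X_t}(Z)]\,dt$, i.e. $X$ only sees the symmetric projection of the driving noise.

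The core step is the projection $H=2Y\sqrt X+H^\perp$, where $Y:=\tfrac12\cL_X^{-1}(\sqrt X H\tp+H\sqrt X)\in\symm$ is $\cF^X$-adapted and $H^\perp:=H-2Y\sqrt X$ satisfies $\sqrt X(H^\perp)\tp+H^\perp\sqrt X=0$ by construction (this is exactly the condition $\cL_X(Y)=\tfrac12(\sqrt X H\tp+H\sqrt X)$). A short calculation gives $\sqrt X H\tp+H\sqrt X=(\alpha_0-\alpha)I_d-(bX+Xb\tp)=:G$, so $Y=\tfrac12\cL_X^{-1}(G)$. With this split, $\int_0^T\Tr[2Y_s\sqrt{X_s}\,d\tilde W_s]=\int_0^T\Tr[Y_s\,d\tilde M_s]$ is $\cF^X_T$-measurable, while $\int_0^T\Tr[H^\perp_s\,d\tilde W_s]$ is orthogonal to $\tilde M$.

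The step demanding the most care is then $\E_{\theta_0}[\exp(-\int_0^T\Tr[H^\perp_s\,d\tilde W_s])\mid\cF^X_T]$. I would argue that the Dol\'eans exponential $\mathcal{E}_T=\exp(-\int_0^T\Tr[H^\perp_s\,d\tilde W_s]-\tfrac12\int_0^T\Tr[H^\perp_s(H^\perp_s)\tp]\,ds)$ defines a probability $\mathbb{Q}=\mathcal{E}_T\,\Px_{\theta_0}$ under which, by Girsanov, the drift added to $X$ is proportional to $\sqrt X(H^\perp)\tp+H^\perp\sqrt X=0$; hence $X$ keeps its law $WIS_d(x,\alpha_0,0,I_d)$, so $\mathbb{Q}$ and $\Px_{\theta_0}$ agree on $\cF^X_T$ and $\E_{\theta_0}[\mathcal{E}_T\mid\cF^X_T]=1$. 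Since the quadratic-variation term is $\cF^X_T$-measurable, this yields the conditional expectation $\exp(\tfrac12\int_0^T\Tr[H^\perp_s(H^\perp_s)\tp]\,ds)$; the delicate point is justifying the integrability that makes $\mathcal{E}$ a true martingale. Combining this with the expansion $\Tr[HH\tp]=4\Tr[YXY]+\Tr[H^\perp(H^\perp)\tp]$ (the cross term $4\Tr[Y\sqrt X(H^\perp)\tp]$ vanishes because $Y\in\symm$ while $\sqrt X(H^\perp)\tp$ is antisymmetric) gives the intermediate form $L_T^{\theta,\theta_0}=\exp(-\int_0^T\Tr[Y_s\,d\tilde M_s]-2\int_0^T\Tr[Y_sX_sY_s]\,ds)$.

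Finally I would substitute $Y=\tfrac12\cL_X^{-1}(G)$ and split $Y=Y^\alpha+Y^b$ via $\cL_X^{-1}(I_d)=\tfrac12X^{-1}$, so $Y^\alpha=\tfrac{\alpha_0-\alpha}{4}X^{-1}$ and $Y^b=-\tfrac12\cL_X^{-1}(bX+Xb\tp)$. For the $\alpha$-part, It\^o's formula for $\log\det X$ together with the variation correction computed from~\eqref{eq:crochet_general_de_X}, namely $\int_0^T\Tr[X_s^{-1}dX_s]=\log(\det X_T/\det x)+(d+1)\int_0^T\Tr[X_s^{-1}]\,ds$, produces the log-determinant term and, after collecting the linear and quadratic $\alpha$-contributions, the coefficient $-\tfrac{\alpha-\alpha_0}{4}(\tfrac{\alpha+\alpha_0}{2}-1-d)$ of $\int_0^T\Tr[X_s^{-1}]\,ds$. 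For the $b$-part I would use $d\tilde M=dX-\alpha_0 I_d\,dt$ and the two identities $\Tr[\cL_X^{-1}(S)]=\tfrac12\Tr[X^{-1}S]$ and $\Tr[\cL_X^{-1}(S)S]=2\Tr[\cL_X^{-1}(S)X\cL_X^{-1}(S)]$ (both following from $S=\cL_X(\cL_X^{-1}(S))$): the first collapses the $\Tr[b]$-linear contributions, including the cross term $-4\int\Tr[Y^\alpha XY^b]$, to exactly $-\tfrac{\alpha T}{2}\Tr[b]$; the term $-\int\Tr[Y^b\,d\tilde M]$ supplies $\tfrac12\int_0^T\Tr[\cL_{X_t}^{-1}(bX_t+X_tb\tp)\,dX_t]$; and the second identity turns $-2\int\Tr[Y^bX Y^b]$ into $-\tfrac14\int_0^T\Tr[\cL_{X_t}^{-1}(bX_t+X_tb\tp)(bX_t+X_tb\tp)]\,dt$. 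Collecting all contributions matches~\eqref{vs_gen_bgen}.
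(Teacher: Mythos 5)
Your overall strategy is the same as the paper's: identify $L_T^{\theta,\theta_0}$ with a conditional expectation given $\cF^X_T$, split the stochastic integral against the noise into a component that is a function of the observed path and a component orthogonal to the martingale part of $X$, and then do the It\^o/linear-algebra bookkeeping. Your unified projection $H=2Y\sqrt{X}+H^\perp$ with $Y=\frac12 \cL_X^{-1}(\sqrt{X}H\tp+H\sqrt{X})$ is a clean repackaging of what the paper does: the paper first splits $b=b^s+b^a$, observes that the $\alpha$- and $b^s$-contributions to $\int_0^T\Tr[H_s dW_s]$ are already $\cF^X_T$-measurable (in your language, $H^\perp$ vanishes on those pieces, since $\cL_X^{-1}(b^sX+Xb^s)=b^s$ and $\cL_X^{-1}(I_d)=\frac12 X^{-1}$), and only projects the antisymmetric part, obtaining $\Gamma_t=\cL_{X_t}^{-1}\bigl(\frac12(b^aX_t-X_tb^a)\bigr)$ — which is exactly (up to sign) the $b^a$-component of your $Y$, and your $H^\perp=(\cL_X^{-1}(b^aX-Xb^a)-b^a)\sqrt{X}$ coincides up to sign with the paper's residual integrand. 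Your Bayes identification $L_T^{\theta,\theta_0}=\E_{\theta_0}[d\Px_\theta/d\Px_{\theta_0}\,|\,\cF^X_T]$ is consistent with the definition~\eqref{def_likelihood}, and your closing algebra (the identity $\int_0^T\Tr[X_s^{-1}dX_s]=\log(\det X_T/\det x)+(d+1)\int_0^T\Tr[X_s^{-1}]ds$, the collapse of the $\Tr[b]$-linear terms, and $\Tr[\cL_X^{-1}(S)S]=2\Tr[\cL_X^{-1}(S)X\cL_X^{-1}(S)]$) is correct and does reproduce~\eqref{vs_gen_bgen}.

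The genuine gap is the one you flagged yourself and then left open: your evaluation of $\E_{\theta_0}\bigl[\exp(-\int_0^T\Tr[H^\perp_s d\tilde W_s])\,\big|\,\cF^X_T\bigr]$ via Girsanov needs the Dol\'eans exponential $\mathcal{E}$ of the orthogonal part to be a \emph{true} martingale, i.e. $\E_{\theta_0}[\mathcal{E}_T]=1$. A priori it is only a nonnegative local martingale, hence a supermartingale, which gives an inequality rather than the identity you need; and Novikov's criterion is not available for free here, because $\Tr[H^\perp_s (H^\perp_s)\tp]$ is unbounded (it is affine in $X_s$). The paper closes exactly this hole by a different device: since the residual local martingale has zero bracket with $\int_0^\cdot\Tr[\tilde\Gamma\, (dX_s-\mathrm{drift}\,ds)]$ for every $\tilde\Gamma\in\symm$, the pair (process $X$, residual) solves the same martingale problem as $(X,\int_0^\cdot\sqrt{v_s}\,d\beta_s)$, where $v$ is the $\cF^X$-adapted residual variance and $\beta$ is a Brownian motion \emph{independent} of $X$; by uniqueness of that martingale problem the two pairs have the same law, so conditionally on $\cF^X_T$ the residual is exactly a centered Gaussian with variance $\int_0^T v_s\,ds$, and the conditional expectation equals $\exp(\frac12\int_0^T v_s\,ds)$ with no extra integrability input — the true-martingale property of $\mathcal{E}$ then comes out as a byproduct instead of being needed as an input. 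To complete your argument, either import this martingale-problem step for your $H^\perp$ (it applies mutatis mutandis under $\Px_{\theta_0}$), or prove $\E_{\theta_0}[\mathcal{E}_T]=1$ directly, e.g. by a Rydberg-type localization of the kind the paper uses in the proof of Proposition~\ref{prop:lap_Wish}.
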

\noindent Lemmas~\ref{lemme-linear} and~\ref{lemme-autoadjoint} states some properties of $\cL_X$, and the proof of Proposition~\ref{prop_MLE_bgen} is given in Appendix~\ref{App_proof_mle_bgen}. In particular, we see from this proof that $\frac{d {\mathbb{P}}_{\theta_0,T}}{d\mathbb{P}_{\theta,T}}\in \cF^X_T$ if, and only if $b\in \symm$, in which case the likelihood has the following simpler form
\begin{multline}
L_T^{\theta,\theta_0}=\exp\Bigl(\frac{\alpha-\alpha_0}{4}\log\left(\frac{\det[X_T]}{\det[x]}\right)+\frac{\Tr[bX_T]-{\Tr[bx]}}{2}-\frac{1}{2}\int_0^T \Tr[b^2X_s]ds
\\
-\frac{\alpha-\alpha_0}{4}\big( \frac{\alpha+\alpha_0}{2}-1-d \big)\int_0^T \Tr[X_s^{-1}]ds-\frac{\alpha T}{2}\Tr[b] \Bigr), \label{vs_gen_bsym}
\end{multline}
since $\mathcal{L}_{X_t}^{-1}\left(bX_t+X_tb \right)=b$.

Now, we want to maximize the likelihood and observe that the quantity in the exponential~\eqref{vs_gen_bgen} is quadratic with respect to $(b,\alpha)$ and goes almost surely to $-\infty$ when $\|(b,\alpha)\|\rightarrow +\infty$. To do so, we first remark that $\Tr[b]=\Tr[\mathcal{L}_{X_t}^{-1}\left(bX_t+X_tb\tp \right)]$ by Lemma~\ref{lemme-linear}. Then, Cauchy-Schwarz inequality yields to
\begin{align}
|\Tr[\alpha b]|&=\left|\frac{1}{T} \int_0^T \Tr\left[\sqrt{2}  \mathcal{L}_{X_s}^{-1}\left(bX_s+X_sb\tp \right) \sqrt{X_s} \frac{\alpha}{\sqrt{2}} \sqrt{X_s^{-1}}    \right]ds\right| \nonumber \\
&\leb \frac{1}{T}\int_0^T \Tr\left[(\mathcal{L}_{X_s}^{-1}\left(bX_s+X_sb\tp \right))^2 X_s \right]ds+ \frac{\alpha^2}{4}\frac{1}{T} \int_0^T \Tr\left[  X_s^{-1}\right]ds \\
&=\frac{1}{2T}\int_0^T \Tr\left[\mathcal{L}_{X_s}^{-1}\left(bX_s+X_sb\tp \right)(bX_s+X_sb\tp) \right]ds+ \frac{\alpha^2}{4}\frac{1}{T} \int_0^T \Tr\left[  X_s^{-1}\right]ds , \nonumber 
\end{align}
and it is strict almost surely, which gives that the quadratic form in the exponential~\eqref{vs_gen_bgen} is negative definite. 
There is thus a unique global maximum of~\eqref{vs_gen_bgen} on $\R \times \genm$. We know from Lemma~\ref{lemme-autoadjoint} that $\mathcal{L}_{X_s}^{-1}$ is self-adjoint, and we get with straightforward calculations that the MLE  $\hat\theta_T=(\hat b_T,\hat\alpha_T)$ is characterized by the following equations:
\begin{align}\label{pt_crit_emvgen} \begin{cases}
 \frac{1}{4}\log\left(\frac{\det[X_T]}{\det[x]}\right)
-\frac{\hat\alpha_T-1-d}{4}\int_0^T \Tr[X_s^{-1}]ds-\frac{T}{2}\Tr[\hat b_T] =0,\\
\int_0^T\mathcal{L}_{X_s}^{-1}(dX_s)X_s- \int_0^T\mathcal{L}_{X_s}^{-1}(\hat b_TX_s+X_s\hat b_T\tp)X_sds  -\frac{\hat\alpha_T T}{2}I_d=0.
  \end{cases}
\end{align}
Unless in the ergodic case, we will not be able to obtain convergence results for this estimator. Instead, we will mostly work with the MLE estimator when $b$ is known to be symmetric. This enables us to work with more tractable formulas, even if the calculations are already quite involved in case. Analyzing the general case would require development of further arguments.  Besides, we can consider that Wishart processes with $b$ symmetric already form an interesting family of processes that may be rich enough in many applications. When $b\in \symm$, the unique global maximum $\hat\theta_T=(\hat b_T, \hat\alpha_T)$ of~\eqref{vs_gen_bsym} on $\R \times \symm$   is   characterized by the following equations:
\begin{align}\label{pt_crit_emv} \begin{cases}
 \frac{1}{4}\log\left(\frac{\det[X_T]}{\det[x]}\right)
-\frac{\hat\alpha_T-1-d}{4}\int_0^T \Tr[X_s^{-1}]ds-\frac{T}{2}\Tr[\hat b_T] =0,\\
\frac{X_T-{x}}{2}
-\frac{1}{2}\int_0^T(\hat b_TX_s+X_s\hat b_T)ds-\frac{\hat\alpha_T T}{2}I_d=0.
  \end{cases}
\end{align}
To get more explicit formulas, we have to invert this linear system. For $X \in \symm$ and $a\in \R$, we define the linear applications
\begin{align}\label{def_LXa}
\mathcal{L}_X: \begin{array}[t]{l}  \symm \rightarrow \symm \\ Y \mapsto YX+XY
\end{array} \text{ and } \mathcal{L}_{X,a}: \begin{array}[t]{l}  \symm \rightarrow \symm \\ Y \mapsto YX+XY -2a\Tr[Y]I_d.
\end{array}
\end{align}
We introduce the following shorthand notation
\begin{align}
\label{eq:def_quantities}
R_T:=\int_0^TX_sds,\quad &Q_T:=\left(\int_0^T\Tr[X_s^{-1}]ds\right)^{-1}, \quad  Z_T:=\log\left(\dfrac{\det[X_T]}{\det[x]}\right),
\end{align}
and note that $Q_T$ and $Z_T$ are defined only for $\alpha \geb d+1$ while $R_T$ is defined for $\alpha\geb d-1$ and belongs almost surely to~$\dpos$.\footnote{This is obvious when $\alpha>d-1$ since $X_t \in \dpos$ a.s. by Proposition~4 in~\cite{Bru}. For $\alpha=d-1$, we would have by contradiction the existence of $v_T \in \cF_T^X$ such that $\forall t \in [0, T], v_T \tp X_t v_T=0$. This is clearly not possible by using the connection with matrix-valued Ornstein-Uhlenbeck in this case, see eq.~(5.7) in~\cite{Bru}.}
By using the convexity property of the inverse, see e.g. Mond and Pecaric~\cite{MondPecaric}, we have when $\alpha \geb d+1$
\begin{equation}\label{ineg_Q_R}\Tr\left[\left( \frac{R_T}{T}\right)^{-1}\right]< \frac{Q_T^{-1}}{T}  , \ a.s.
\end{equation}
We get $\hat\alpha_T=1+d +Q_T\big( Z_T - 2T\Tr[\hat b_T ] \big)$ and $\mathcal{L}_{R_T,T^2Q_T}(\hat b_T)=X_T-{x}-T\left(Q_TZ_T
+1+d\right) I_d $. By~\eqref{ineg_Q_R} and Lemma~\ref{lemme-linear}, the latter equation can be inverted, which leads to
\begin{align}
\label{eq:estimator_couple}
\left\{
\begin{array}{l l l}
\hat\alpha_T&=&
1+d+Q_T\Big(Z_T - 2T\Tr\big[\mathcal{L}^{-1}_{R_T,T^2Q_T}\left({X_T-{x}}-T\left[Q_TZ_T
+1+d\right] I_d\right)   \big]\Big) \\\\
\hat b_T
&=&\mathcal{L}^{-1}_{R_T,T^2Q_T}\left({X_T-{x}}-T\left[Q_TZ_T+1+d\right] I_d\right) . 
\end{array}
\right.
\end{align}

The estimator of $\alpha$ when $\alpha \in [d-1,d+1)$ given by the MLE is no longer well defined. The same thing already occurs in dimension $d=1$ for the CIR process, see Ben Alaya and Kebaier~\cite{BK2011}. However, it is still possible to estimate the parameter $b\in\genm$ when $\alpha \geb d-1$ is known.  In this case, we denote $\theta=(b,\alpha)$ and $\theta_0=(0,\alpha)$ and get by repeating the same arguments that
\begin{multline*}
 L_T^{\theta,\theta_0}=\exp\Bigl(\frac{1}{2} \int_0^T \Tr\left[\mathcal{L}_{X_t}^{-1}\left(bX_t+X_tb\tp \right) dX_t \right]  \\
-\frac{1}{4}  \int_0^T \Tr\left[\mathcal{L}_{X_t}^{-1}\left(bX_t+X_tb\tp \right)  (bX_t+X_tb\tp) \right]dt  -\frac{\alpha T}{2}\Tr[b]\Bigr),
\end{multline*}
and the MLE is characterized by 
\begin{equation} \label{MLE_bseul_gen} \int_0^T\mathcal{L}_{X_s}^{-1}(dX_s)X_s- \int_0^T\mathcal{L}_{X_s}^{-1}(\hat b_TX_s+X_s\hat b_T\tp)X_sds  -\frac{\alpha T}{2}I_d=0.
\end{equation}
When $b$ is known a priori to be symmetric, the likelihood and the MLE are then given by
\begin{align}\label{likeli_bseul}
L_T^{\theta,\theta_0}&=\exp\Bigl( \frac{\Tr[bX_T]-{\Tr[bx]}}{2}-\frac{1}{2}\int_0^T \Tr[b^2X_s]ds -\frac{\alpha T}{2}\Tr[b] \Bigr),\\
\hat{b}_T&=\mathcal{L}^{-1}_{R_T}\left(X_T-x-\alpha T I_d\right). \label{MLE_bseul}
\end{align}

The goal of the paper is to study the convergence of the MLE under the original probability~$\Px_\theta$. To do so, we first consider the case where the Wishart process is ergodic.  By Lemma~\ref{lem_ergo}, this holds if $-(b+b\tp)\in \dpos$ when $b\in \genm$, and the ergodicity is equivalent to $-b \in \dpos$ when $b\in\symm$.   Then, we can use Birkhoff's ergodic theorem to determine the convergence of the MLE. Section~\ref{Sec_MLE_ergo} presents these results for~\eqref{eq:estimator_couple} when $\alpha\geb d+1$, for \eqref{vs_gen_bgen} when $\alpha>d+1$ and for both~\eqref{MLE_bseul} and~\eqref{MLE_bseul_gen} when $\alpha\geb d-1$. Section~\ref{Sec_nonergo} studies the convergence of the MLE in some nonergodic cases, namely when $b=\lambda_0 I_d$ with $\lambda_0\geb 0$ and when $b$ is known to be symmetric. More precisely, when $b=0$, we obtain convergence results  for~\eqref{eq:estimator_couple} when $\alpha\geb d+1$ and for~\eqref{MLE_bseul} when $\alpha\geb d-1$. When $\lambda_0>0$, we only obtain  convergence results for~\eqref{MLE_bseul} when $\alpha\geb d-1$. In all these cases, we analyse the convergence by the mean of Laplace transforms. Though limited to some nonergodic cases, we however recover and extend the recent convergence results obtained by Ben Alaya and Kebaier for the one-dimensional CIR process~\cite{BK2011,Alaya_Kebaier_2013}. In Section~\ref{Sec_LAN}, we check that the MLE achieves the optimal rate of convergence in the different cases by proving local asymptotic properties. Last, we study in Section~\ref{Sec_laplace} the Laplace transform of $(X_T,R_T)$. This study can be of independent interest and improves the recent results of Gnoatto and Grasselli~\cite{GnoattoGrasselli}.

\section[Statistical Inference:  the ergodic case]{Statistical Inference of the Wishart process:  the ergodic case}\label{Sec_MLE_ergo}

When $-(b+b\tp) \in \dpos$, the Wishart process $X_t$ converges  in law when $t\rightarrow +\infty$ to the stationary law $X_\infty\sim WIS_d(0,\alpha,0,\sqrt{2q_\infty};1/2)$  with $ q_\infty=\int_0^\infty e^{sb}e^{s b \tp}ds$  for any starting point $x \in \posm$ by Lemma~\ref{lem_ergo}. Therefore this is the unique stationary law which is thus extremal, and we know by Stroock (\cite{Stroock}, Theorem 7.4.8) that it is then ergodic, see also Pag\`es~\cite{Pages_ergo}, Annex A. We introduce the following quantity
$$\rbi:=\mathbb E_\theta (X_\infty).$$
 From the ergodic Birkhoff's theorem, we have 
\begin{equation}\label{Birk1}
\frac{R_T}{T}\overset{a.s.}{\longrightarrow}
\rbi ,\quad\mbox{ as
}T\rightarrow+\infty.
\end{equation}
Besides, when $\alpha \geb d+1$,  $\qbi=\frac{1}{\mathbb E_\theta(\Tr[X_\infty^{-1}])}$ is finite and satisfies 
\begin{equation}\label{ineg_qbi}\qbi \Tr[\rbi^{-1}]< 1,
\end{equation}
due to the convexity property of the inverse, see e.g. Mond and Pecaric~\cite{MondPecaric}.
Again, the ergodic Birkhoff's theorem gives
\begin{equation}\label{Birk2}
 TQ_T\overset{a.s.}{\longrightarrow}
\qbi=\frac{1}{\mathbb E_\theta(\Tr[X_\infty^{-1}])},\quad\mbox{ as
}T\rightarrow+\infty.
\end{equation}
This section is organized as follows. First, we study the MLE~\eqref{eq:estimator_couple} when $b$ is known to be symmetric in the cases  $\alpha> d+1$ and $\alpha=d+1$. Then, we focus on the MLE~\eqref{pt_crit_emvgen} when $b\in\genm$ and $\alpha>d+1$. The analysis follows the same steps and reuses some calculations made in the symmetric case. Last, we study the convergence of the MLE when $\alpha\geb d-1$ is known, in both symmetric and general cases. 

\subsection{The global MLE estimator of $\theta=( b,\alpha)$ when $b$ is known to be symmetric}
When $b\in \symm$, the ergodicity is by Lemma~\eqref{lem_ergo} equivalent to $-b\in \dpos$, which we assume in this subsection. We have $X_\infty\sim WIS_d(0,\alpha,0,\sqrt{-b^{-1}};1/2)$ and it is easy to get from~\eqref{SDE_Intro2} that $\alpha I_d +b \rbi +\rbi b =0$, which gives $\rbi=-\frac{\alpha}{2 } b^{-1} \in \dpos$. We will also show in the proof of Theorem~\ref{thm_MLE_global_ergo} that
\begin{equation}\label{val_qbi} \qbi:=\frac{\alpha-(1+d)}{2 \Tr[-b]}.
\end{equation}

We consider the convergence of the MLE given by~\eqref{eq:estimator_couple} when $\alpha\geb d+1$. We introduce the following martingales:
\begin{align}
\label{eq:def_Mt}
M_t &:= \int_0^t\sqrt{X_s}d{W}_s+\int_0^td{W}\tp _s
\sqrt{X}_s, \\
N_t&:=\int_0^t \Tr[(\sqrt{X_s})^{-1}d {W}_s].\label{eq:def_Nt}
\end{align}
We use the dynamics of $(X_t)_{t \geqslant 0}$  under  ${\mathbb P}_{\theta}$ and Itô's formula for  $(Z_t)_{t \geqslant 0}$ (see e.g. Bru~\cite{Bru}, equation~(2.6)) to get on the one hand
\begin{align} \label{eq_Z}
X_T&=x+\alpha T I_d+\mathcal{L}_{R_T}(b) + M_T, &
Z_T&=(\alpha-1-d) Q_T^{-1}+2\Tr[b]T +2 N_T.
\end{align}
On the other hand, we obtain from~\eqref{pt_crit_emv} and~\eqref{eq:def_quantities} that $X_T=x+ \hat \alpha_T T I_d+\mathcal{L}_{R_T}(\hat b_T)$ and  $Z_T=(\hat \alpha_T-1-d) Q_T^{-1}+2T\Tr[\hat b_T]$, which yields to
\begin{equation}\label{erreur_mle}
\left\{
\begin{array}{r c l}
\hat\alpha_T-\alpha &=& 2TQ_T\Tr[b-\hat b_T]+2Q_T N_T \\\\

\mathcal{L}_{R_T}(\hat b_T-b)&=&(\alpha-\hat \alpha_T)TI_d+ M_T= 2T^2Q_T\Tr[\hat b_T-b]I_d + M_T-2TQ_TN_TI_d.
\end{array}
\right.
\end{equation}

\begin{mytheo}\label{thm_MLE_global_ergo}
Assume that $-b\in S^{+,*}_d$ and  $\alpha > d+1$. Under $\Px_\theta$, $\left(\sqrt{T}( \hat b_T-b, \hat \alpha_T-\alpha)\right)$ converges in law when $T\rightarrow +\infty$ to the centered Gaussian vector $(\mathbf G, H)$ that takes values in $\symm \times \R$ and has the following Laplace transform: for $c,\lambda \in \symm \times \R$,
$$ \E_\theta \left[ \exp\left( \Tr[c \mathbf G] +\lambda H \right) \right]= \exp\left(  \frac{2 \qbi  \lambda^2}{1-\qbi \Tr[\rbi^{-1}]} -    \frac{2 \qbi \lambda}{1-\qbi \Tr[\rbi^{-1}]} \Tr[ c \rbi^{-1}]  + \Tr[  c  \mathcal{L}^{-1}_{\rbi ,\qbi}(c)]  \right).$$

\end{mytheo}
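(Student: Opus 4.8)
The plan is to solve the linearized system~\eqref{erreur_mle} for the scaled error, reduce everything to the asymptotics of the two driving martingales $M$ and $N$ from~\eqref{eq:def_Mt}--\eqref{eq:def_Nt}, and then identify the Gaussian limit through a martingale central limit theorem combined with the almost sure limits~\eqref{Birk1}--\eqref{Birk2}. First I would rescale~\eqref{erreur_mle}: using $\mathcal{L}_{R_T}=T\mathcal{L}_{R_T/T}$ and the definition~\eqref{def_LXa} of $\mathcal{L}_{X,a}$, dividing the second equation by $T$, multiplying by $\sqrt T$ and moving the $\Tr$-term to the left gives
$$\mathcal{L}_{R_T/T,\,TQ_T}\!\left(\sqrt T(\hat b_T-b)\right)=\frac{M_T}{\sqrt T}-2(TQ_T)\frac{N_T}{\sqrt T}I_d,$$
while the first equation becomes
$$\sqrt T(\hat\alpha_T-\alpha)=-2(TQ_T)\Tr\!\left[\sqrt T(\hat b_T-b)\right]+2(TQ_T)\frac{N_T}{\sqrt T}.$$
Since $R_T/T\to\rbi$ and $TQ_T\to\qbi$ almost surely by~\eqref{Birk1}--\eqref{Birk2}, and since $\qbi\Tr[\rbi^{-1}]<1$ by~\eqref{ineg_qbi} makes $\mathcal{L}_{\rbi,\qbi}$ invertible (Lemma~\ref{lemme-linear}), the random operator $\mathcal{L}_{R_T/T,\,TQ_T}^{-1}$ converges almost surely to the deterministic $\mathcal{L}_{\rbi,\qbi}^{-1}$. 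I would also record the value~\eqref{val_qbi} of $\qbi$ here, obtained by taking expectations in the second identity of~\eqref{eq_Z} and letting $T\to\infty$: since $\E_\theta[N_T]=0$, $\E_\theta[Z_T]=o(T)$ and $\E_\theta[Q_T^{-1}]/T\to1/\qbi$, one gets $0=(\alpha-1-d)/\qbi+2\Tr[b]$.

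The analytic heart is a joint central limit theorem for $(M_T/\sqrt T,\,N_T/\sqrt T)$. These are continuous martingales, so it suffices to show that their normalized brackets converge to a deterministic limit and invoke the continuous martingale CLT (Dambis--Dubins--Schwarz time change, or the version in Lipster--Shiryaev~\cite{LS_Book}). A direct It\^o computation, identical to~\eqref{eq:crochet_general_de_X} with $a\tp a=I_d$, gives $d\langle M_{ij},M_{kl}\rangle_t=\big[(X_t)_{ik}\delta_{jl}+(X_t)_{il}\delta_{jk}+(X_t)_{jk}\delta_{il}+(X_t)_{jl}\delta_{ik}\big]dt$, together with $\langle N\rangle_T=Q_T^{-1}$ and the exact cross-bracket $\langle N,M\rangle_T=2T\,I_d$. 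Dividing by $T$ and applying Birkhoff's theorem, $\tfrac1T\langle M_{ij},M_{kl}\rangle_T\to(\rbi)_{ik}\delta_{jl}+(\rbi)_{il}\delta_{jk}+(\rbi)_{jk}\delta_{il}+(\rbi)_{jl}\delta_{ik}$ and $\tfrac1T\langle N\rangle_T\to1/\qbi$, both deterministic, while $\tfrac1T\langle N,M\rangle_T=2I_d$. Hence $(M_T/\sqrt T,N_T/\sqrt T)$ converges in law to a centered Gaussian pair $(G_M,G_N)$ with, for symmetric test matrices $c$, the covariances $\mathrm{Var}(\Tr[cG_M])=4\Tr[\rbi c^2]=2\Tr[c\mathcal{L}_{\rbi}(c)]$, $\mathrm{Cov}(\Tr[cG_M],G_N)=2\Tr[c]$ and $\mathrm{Var}(G_N)=1/\qbi$.

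Combining the two ingredients by Slutsky's lemma — an almost surely convergent deterministic-limit operator applied to a weakly convergent vector — yields that $\sqrt T(\hat b_T-b)\to\mathbf G:=\mathcal{L}_{\rbi,\qbi}^{-1}(G_M-2\qbi G_N I_d)$ and $\sqrt T(\hat\alpha_T-\alpha)\to H:=-2\qbi\Tr[\mathbf G]+2\qbi G_N$ jointly in law, so $(\mathbf G,H)$ is the announced linear image of the Gaussian pair and is itself a centered Gaussian vector in $\symm\times\R$.

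It remains to identify the Laplace transform, which for a centered Gaussian equals $\exp(\tfrac12\mathrm{Var}(\Tr[c\mathbf G]+\lambda H))$. Writing $\Tr[c\mathbf G]+\lambda H$ as a linear functional of $(G_M,G_N)$ via the self-adjointness of $\mathcal{L}_{\rbi,\qbi}^{-1}$ (Lemma~\ref{lemme-autoadjoint}) and inserting the covariances above reduces the claim to a purely algebraic identity. The computation is organized around $\mathcal{L}_X^{-1}(I_d)=\tfrac12 X^{-1}$ and the Sherman--Morrison-type inversion of $\mathcal{L}_{X,a}$ from Lemma~\ref{lemme-linear}, which produce the denominator $1-\qbi\Tr[\rbi^{-1}]$; indeed $\Tr[\mathcal{L}_{\rbi,\qbi}^{-1}(I_d)]=\tfrac12\Tr[\rbi^{-1}]/(1-\qbi\Tr[\rbi^{-1}])$. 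The pure-$c$ term collapses to $\Tr[c\mathcal{L}_{\rbi,\qbi}^{-1}(c)]$, and the $\lambda$ and mixed terms assemble into the first two summands. I expect the main obstacle to be twofold: justifying the joint martingale CLT cleanly — checking the bracket convergence and that the random operator $\mathcal{L}_{R_T/T,TQ_T}^{-1}$ may be pulled through the weak limit without creating a mixing term, which is legitimate precisely because the brackets have deterministic limits — and then the careful bookkeeping of the coupling between $\mathbf G$ and $H$, both of which feed on $G_N$, so that the cross-terms combine into exactly the coefficients displayed in the statement.
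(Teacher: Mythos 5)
Your proposal follows essentially the same route as the paper's proof: the same rescaling of the error system~\eqref{erreur_mle} into $\mathcal{L}_{R_T/T,\,TQ_T}$ form, the same martingale CLT for $(M_T/\sqrt{T},N_T/\sqrt{T})$ based on the brackets~\eqref{crochetsMN}, the same Birkhoff--Slutsky step using the continuity of $(X,Y,a)\mapsto\mathcal{L}^{-1}_{X,a}(Y)$, and the same algebraic identification of the limiting Laplace transform via $\mathcal{L}^{-1}_{\rbi,\qbi}(I_d)=\frac{1}{2(1-\qbi\Tr[\rbi^{-1}])}\rbi^{-1}$ and the trace formula of Lemma~\ref{lemme-linear}. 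The only cosmetic deviation is deriving~\eqref{val_qbi} by taking expectations in~\eqref{eq_Z} rather than by the almost-sure limits, which is an inessential variant.
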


\begin{proof} 
By~\eqref{ineg_Q_R} and Lemma~\ref{lemme-linear}, we can rewrite the system~\eqref{erreur_mle} as follows
\[
\left\{\begin{array}{r c l}
\sqrt{T}(\hat\alpha_T-\alpha) &=&
2TQ_T\dfrac{N_T}{\sqrt{T}}- 2TQ_T\Tr\left[\mathcal{L}^{-1}_{\frac{R_T}{T},TQ_T}\left(\dfrac{M_T}{\sqrt{T}}-2TQ_TI_d\dfrac{N_T}{\sqrt{T}} \right) \right]\\\\
\sqrt{T}(\hat b_T-b)&=&\mathcal{L}^{-1}_{\frac{R_T}{T},TQ_T}\left(\dfrac{M_T}{\sqrt{T}}-2TQ_TI_d\dfrac{N_T}{\sqrt{T}} \right).
\end{array}
\right.
\]
Note that,  for $i,j,k,l\in\{1,\dots,d\}$ we have
\begin{eqnarray}\label{brackets} 
\langle M_{i,j},
M_{k,l}\rangle_t &=&\left[\delta_{jl}(R_t)_{i,k}+\delta_{jk}(R_t)_{i,l}+
\delta_{il}(R_t)_{j,k}+\delta_{ik}(R_t)_{j,l}\right], \nonumber\\
\langle M_{i,j}, N\rangle_t &=&2t\delta_{ij}\quad \mbox{ and }\quad \langle
 N\rangle_t=Q_t^{-1}, \label{crochetsMN}
\end{eqnarray}
where $\delta_{ij}$ stands for the Kronecker symbol.

So, it follows from the central limit theorem for martingales (see e.g., Kutoyants~\cite{KutoyantsBook}, Proposition 1.21), that  $(\frac{M_T}{\sqrt{T}},  \frac{N_T}{\sqrt{T}})$  converges in law under $\Px_\theta$  towards a centered Gaussian vector $(\tilde{\mathbf G}, \tilde{H})$ taking values in $\symm\times \R$ such that
\begin{align}
\E_\theta(\tilde{\mathbf{G}}_{i,j}\tilde{\mathbf{G}}_{k,l})&=\left[\delta_{jl}(\rbi)_{i,k}+\delta_{jk}(\rbi)_{i,l}+
\delta_{il}(\rbi)_{j,k}+\delta_{ik}(\rbi)_{j,l}\right], \label{loi_Gtilde}\\
\E_\theta(\tilde{\mathbf{G}}_{i,j}\tilde{H})&=2 \delta_{i,j}  \text{ and } \E_\theta(\tilde{H}^2)=  \qbi^{-1}. \nonumber
\end{align} 
From~\eqref{eq_Z} and~\eqref{Birk2}, we obtain~\eqref{val_qbi}.
From Lemma \ref{lemme-linear}, the function $(X,Y,a)\mapsto \mathcal{L}^{-1}_{X,a}(Y)$ is continuous, and we get by Slutsky's theorem that $(\sqrt{T}(\hat b_T-b),\sqrt{T}(\hat\alpha_T-\alpha)) $ converges in law to the Gaussian vector
$$(\mathbf G, H)=\left( \mathcal{L}^{-1}_{\rbi ,\qbi}\left(\tilde{\mathbf G}-2 \qbi \tilde{H} I_d \right), 2 \qbi \left( \tilde{H}-\Tr\left[ \mathcal{L}^{-1}_{\rbi ,\qbi}\left(\tilde{\mathbf G}-2 \qbi \tilde{H} I_d \right) \right] \right)\right). $$
We are interested to calculate the Laplace transform of this law. First, we calculate the Laplace transform of $(\tilde{\mathbf G},\tilde{H})$:
\begin{equation}\label{lap_tgth}
\forall c \in \symm, \lambda \in \R, \E_\theta \left[ \exp\left( \Tr[c \tilde{\mathbf G} ] +\lambda \tilde{H} \right) \right]=\exp\left(\frac{1}{2}\left( \lambda^2 \qbi^{-1} + 4 \lambda \Tr[c] + 4 \Tr[c^2 \rbi] \right) \right).
\end{equation}
We want to calculate for $c\in \symm$ and $\lambda \in \R$, 
\begin{align*}
 \E_\theta \left[ \exp\left( \Tr[c \mathbf G] +\lambda H \right) \right] &=\E_\theta \left[ \exp\left( \Tr[(c-2 \lambda  \qbi I_d) {\mathbf G}] + 2 \lambda \qbi \tilde{H} \right) \right].
\end{align*}
Due to~\eqref{ineg_qbi} and Lemma~\ref{lemme-linear}, we can introduce $\tilde{c}=\mathcal{L}^{-1}_{\rbi ,\qbi}(c-2 \lambda  \qbi I_d)$. We have
$$ \rbi \tilde{c}+\tilde{c} \rbi- 2 \qbi \Tr[\tilde{c}]I_d=c-2\lambda \qbi I_d,$$
and thus
\begin{align*}
\Tr[(c-2 \lambda  \qbi I_d) \mathbf G]&=\Tr[(\rbi \tilde{c}+\tilde{c} \rbi- 2 \qbi \Tr[\tilde{c}]I_d)  \mathbf G]\\
&=\Tr[\tilde{c} (\rbi \mathbf G+\mathbf G \rbi- 2 \qbi \Tr[\mathbf G]I_d)  ]=\Tr[\tilde{c} (\tilde{\mathbf G}-2 \qbi \tilde{H} I_d )].
\end{align*}
We therefore obtain from~\eqref{lap_tgth}
\begin{align*}
 &\E_\theta \left[ \exp\left( \Tr[c \mathbf G] +\lambda H \right) \right] \\
&=\E_\theta \left[ \exp\left( \Tr[ \tilde{c} (\tilde{\mathbf G}-2 \qbi \tilde{H} I_d )] + 2 \lambda \qbi \tilde{H} \right) \right] \\
&=\E_\theta \left[ \exp\left( \Tr[ \tilde{c} \tilde{\mathbf G}] + 2 \qbi (\lambda-\Tr[\tilde{c}]) \tilde{H} \right) \right] \\
&=\exp\left( 2 \left\{ (\lambda-\Tr[\tilde{c}])^2 \qbi + 2(\lambda-\Tr[\tilde{c}])\Tr[\tilde{c}]\qbi + \Tr[\tilde{c}^2 \rbi] \right\} \right).
\end{align*}
Since $2\Tr[\tilde{c}^2 \rbi]=\Tr[\tilde{c}(\tilde{c} \rbi+ \rbi \tilde{c})]=\Tr[\tilde{c}c]+2\qbi (\Tr[\tilde{c}]-\lambda) \Tr[\tilde{c}] $, we get
$$\E_\theta \left[ \exp\left( \Tr[c \mathbf G] +\lambda H \right) \right]=\exp\left( 2   \lambda  (\lambda-\Tr[\tilde{c}]) \qbi + \Tr[\tilde{c}c  ]  \right).$$
We now use that $\mathcal{L}^{-1}_{\rbi ,\qbi}(I_d)=\frac{1}{2(1-\qbi \Tr[\rbi^{-1}])}\rbi^{-1}$ to get
$\tilde{c}=\mathcal{L}^{-1}_{\rbi ,\qbi}(c)-  \lambda \frac{\qbi  \rbi^{-1}}{1-\qbi \Tr[\rbi^{-1}]}$. Since we have $\Tr[\mathcal{L}^{-1}_{\rbi ,\qbi}(c)]= \frac{\Tr[\rbi^{-1}c]}{2(1-\qbi \Tr[\rbi^{-1}])} $ by Lemma~\ref{lemme-linear}, this yields to the claimed result.
\end{proof}

When $\alpha=d+1$, the rate of convergence of the MLE of $\alpha$ is even better as stated by the following theorem. 
\begin{mytheo}\label{alphad}
 Assume $-b\in S^{+,*}_d $ and $\alpha=d+1$. Then, under $\Px_\theta$, $\left(\sqrt{T}(\hat
b_T-b),T(\hat \alpha_T-\alpha)\right)$ converges in law when $T\rightarrow + \infty$
to  $\left(\mathbf G    , -2\tau^{-1}_{_{-\Tr[b]}}\Tr [b]  \right)$, where $\tau_a=\inf\{t\geq 0,\;B_t=a\}$ with $(B_t)_{t\geq 0}$ a given one-dimensional standard Brownian motion and $\mathbf G$ is a Gaussian vector independent of~$B$ such that $\E_\theta \left[ \exp\left( \Tr[c \mathbf G]  \right) \right]= \exp\left(   \Tr[  c  \mathcal{L}^{-1}_{\rbi }(c)]  \right)$, $c\in \symm$.
\end{mytheo}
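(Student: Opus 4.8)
The plan is to reuse the exact error representation established in the proof of Theorem~\ref{thm_MLE_global_ergo}, specialized to the critical value $\alpha=d+1$, and to follow separately the two martingales $M_T$ and $N_T$ of~\eqref{eq:def_Mt}--\eqref{eq:def_Nt}. The decisive simplifications at $\alpha=d+1$ are that $\alpha-1-d=0$ and, by~\eqref{val_qbi}, $\qbi=0$, so that $TQ_T\overset{a.s.}{\to}0$ by~\eqref{Birk2}. Writing $a:=-\Tr[b]>0$ (we assume $-b\in\dpos$), the second identity in~\eqref{eq_Z} collapses to $N_T=aT+\tfrac12 Z_T$, and since ergodicity (Lemma~\ref{lem_ergo}) gives $X_T\Rightarrow X_\infty\in\dpos$, the variable $Z_T=\log(\det X_T/\det x)$ is tight. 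Inverting the MLE equations as in~\eqref{erreur_mle}, I would first record the two clean relations
$$\sqrt T(\hat b_T-b)=\mathcal{L}^{-1}_{R_T/T,\,TQ_T}\!\left(\tfrac{M_T}{\sqrt T}-2TQ_T\tfrac{N_T}{\sqrt T}I_d\right),\qquad T(\hat\alpha_T-\alpha)=2TQ_TN_T-2T^2Q_T\Tr[\hat b_T-b],$$
which isolate the role of $M_T/\sqrt T$ (feeding $\mathbf G$) from that of $N_T$ and the clock $\sigma_T:=Q_T^{-1}=\langle N\rangle_T$ (feeding the hitting time).

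The core of the argument is the clock limit $\sigma_T/T^2\Rightarrow\tau_a$, equivalently $T^2Q_T\Rightarrow 1/\tau_a$; I stress that this is purely a statement about $N$, independent of the estimators. I would use the Dambis--Dubins--Schwarz representation $N_t=B_{\sigma_t}$ for a standard Brownian motion $B$ (legitimate since $\sigma_T\to\infty$ a.s.), so that $N_T=aT+\tfrac12 Z_T$ reads $B_{\sigma_T}=aT+O_P(1)$. After the Brownian rescaling $B^T_v:=B_{T^2v}/T$, again a standard Brownian motion, this becomes $B^T_{\sigma_T/T^2}=a+O_P(T^{-1})$; identifying $\sigma_T/T^2$ with the first-passage time of $B^T$ at level $a$ and using $\tau_a\overset{d}{=}a^2\tau_1$ gives $\sigma_T/T^2\Rightarrow\tau_a$. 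Granting $T^2Q_T=O_P(1)$ (which this convergence provides), the second reduction identity yields, via $N_T=aT+\tfrac12 Z_T$ together with $TQ_TZ_T=o_P(1)$ and $T^2Q_T\Tr[\hat b_T-b]=(T^2Q_T)O_P(T^{-1/2})=o_P(1)$, the reduction $T(\hat\alpha_T-\alpha)=2a\,T^2Q_T+o_P(1)\Rightarrow 2a/\tau_a=-2\tau_{-\Tr[b]}^{-1}\Tr[b]$, as claimed.

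For the $\hat b_T$ component I would argue exactly as in Theorem~\ref{thm_MLE_global_ergo}: the martingale CLT with brackets~\eqref{brackets} and $R_T/T\to\rbi$ from~\eqref{Birk1} gives $M_T/\sqrt T\Rightarrow\tilde{\mathbf G}$ with covariance~\eqref{loi_Gtilde}, while the correction term $2TQ_TN_T/\sqrt T=2a\,T^{3/2}Q_T+\sqrt T Q_TZ_T$ is $o_P(1)$ once $T^2Q_T=O_P(1)$. Since $\mathcal{L}^{-1}_{X,a}$ is continuous (Lemma~\ref{lemme-linear}) and $TQ_T\to0$, Slutsky's theorem gives $\sqrt T(\hat b_T-b)\Rightarrow\mathbf G:=\mathcal{L}^{-1}_{\rbi}(\tilde{\mathbf G})$; the self-adjointness identity $2\Tr[\tilde c^2\rbi]=\Tr[\tilde c\,c]$ with $\tilde c=\mathcal{L}^{-1}_{\rbi}(c)$ then produces the stated Laplace transform $\exp(\Tr[c\,\mathcal{L}^{-1}_{\rbi}(c)])$. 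The independence of $\mathbf G$ and $\tau_a$ should come from the asymptotic decorrelation of $M$ and $N$: the cross-bracket $\langle M_{i,j},N\rangle_T=2T\delta_{ij}$ is negligible against $\sqrt{\langle M_{i,j}\rangle_T\langle N\rangle_T}\sim\sqrt{T\sigma_T}\gg T$ (as $\sigma_T\gg T$), so $M_T/\sqrt T$ and the clock $\sigma_T$ become asymptotically independent, and this propagates to $(\mathbf G,\tau_a)$.

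I expect the main obstacle to be the rigorous identification $\sigma_T/T^2\Rightarrow\tau_a$: the relation $B_{\sigma_T}=aT+O_P(1)$ pins the \emph{level} of $B$ but not the \emph{time}, and because $B$ is not monotone one must genuinely exclude that the increasing clock $\sigma_T$ overshoots the first-passage time at level $aT$ (the deep downward excursions of $Z_T$ are only controlled in probability, not pathwise). I would resolve this, in the spirit of Ben Alaya and Kebaier~\cite{BK2011,Alaya_Kebaier_2013}, either by sandwiching $\sigma_T$ between first-passage times of $B$ at levels $a(1\pm\varepsilon)T$, or more robustly by computing the Laplace transform $\E_\theta[\exp(-\lambda\sigma_T/T^2)]$ through the Wishart/affine machinery of Section~\ref{Sec_laplace} and checking convergence to $\E[\exp(-\lambda\tau_a)]=\exp(-a\sqrt{2\lambda})$; the same joint-transform computation would simultaneously deliver the asymptotic independence of $\mathbf G$ and $\tau_a$.
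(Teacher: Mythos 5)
Your setup and the routine parts are right, and they coincide with the paper's own reduction: your two displayed identities are the system~\eqref{mle_d+1}, the algebra giving $T(\hat\alpha_T-\alpha)=2aT^2Q_T+o_P(1)$ (with $a=-\Tr[b]$) and $\sqrt{T}(\hat b_T-b)=\mathcal{L}^{-1}_{R_T/T,\,TQ_T}\bigl(M_T/\sqrt{T}+o_P(1)\bigr)$ is correct once $T^2Q_T=O_P(1)$ is granted, and the Laplace transform of $\mathbf G=\mathcal{L}^{-1}_{\rbi}(\tilde{\mathbf G})$ is exactly Lemma~\ref{lemme_matgauss}. But the entire content of the theorem sits in the two facts you flag and do not prove: the convergence $Q_T^{-1}/T^2\Rightarrow\tau_a$ \emph{jointly} with $M_T/\sqrt{T}$, and the independence of the two limits. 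Your primary route is not salvageable as stated, and for a slightly different reason than the one you give: since $\sigma_t=\int_0^t\Tr[X_s^{-1}]ds$ is a continuous strictly increasing bijection from $[0,T]$ onto $[0,\sigma_T]$, one has $\sup_{u\leb\sigma_T}B_u=\sup_{t\leb T}N_t=\sup_{t\leb T}(at+Z_t/2)$, so the sandwich's upper bound $\sigma_T\leb\inf\{u:\ B_u=a(1+\varepsilon)T\}$ fails exactly on the event that $Z$ has an \emph{upward} excursion with $\max_{t\leb T}Z_t\gtrsim 2\varepsilon aT$; excluding this requires a maximal inequality or a uniform-in-time exponential moment bound for $\log\det X_t$ over the whole path, which is genuinely more than the terminal-time tightness of $Z_T$ you invoke (the lower bound, by contrast, does follow from that tightness). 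Your independence argument is also not a proof: vanishing normalized cross-brackets give asymptotic decorrelation, but the clock limit $\tau_a$ is not Gaussian, and uncorrelatedness does not imply asymptotic independence outside the jointly Gaussian framework.

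Your fallback is indeed the paper's actual method, but note that the tool you point to cannot do the job: Proposition~\ref{prop:lap_Wish} of Section~\ref{Sec_laplace} gives the transform of $(X_T,R_T)$ and contains no information on $Q_T^{-1}=\int_0^T\Tr[X_s^{-1}]ds$. The paper instead invokes Theorem~4.1 of Mayerhofer~\cite{MayerhoferHammamet} to write the exponential-martingale identity~\eqref{RM}, which couples precisely $N_T$, $Q_T^{-1}$ and $\Tr[\Gamma M_T]$ inside a single expectation equal to~$1$; it then inserts the corrections $\exp(\lambda N_T/T+\lambda\Tr[b])$ and $\exp(-\frac{2}{T}\int_0^T\Tr[\Gamma^2X_s]ds+2\Tr[\Gamma^2\rbi])$, shows they are asymptotically negligible via~\eqref{couple}, Cauchy--Schwarz and a uniform bound on $\E_\theta[(\det X_T/\det x)^{\lambda r/(2T)}]$ coming from stationarity, and concludes
\[
\lim_{T\rightarrow\infty}\E_\theta\left[\exp\left(-\frac{\lambda^2}{2T^2}Q_T^{-1}+\frac{1}{\sqrt{T}}\Tr[\Gamma M_T]\right)\right]=\exp\bigl(\lambda\Tr[b]\bigr)\,\exp\bigl(2\Tr[\Gamma^2\rbi]\bigr).
\]
Because $\E[\exp(-\frac{\lambda^2}{2}\tau_a)]=e^{-\lambda a}=e^{\lambda\Tr[b]}$, this single factorized limit delivers at once the identification of the clock limit, the Gaussian limit of $M_T/\sqrt{T}$, and their independence, i.e.~\eqref{cv_loi_3}, after which your Slutsky step finishes the proof. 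So the one contingency sentence at the end of your proposal is where the theorem actually lives; as written, the proposal has a genuine gap at its central step.
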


\begin{proof}
By~\eqref{ineg_Q_R} and Lemma~\ref{lemme-linear}, we can rewrite the system~\eqref{erreur_mle} as follows
\begin{equation}\label{mle_d+1}
\left\{\begin{array}{r c l}
T(\hat\alpha_T-\alpha) &=&
2T^2Q_T\left(\dfrac{N_T}{T}-\frac{1}{\sqrt{T}}
\Tr\left[\mathcal{L}^{-1}_{\frac{R_T}{T},TQ_T}\left(\dfrac{M_T}{\sqrt{T}}-2T^{3/2}Q_TI_d\dfrac{N_T}{T} \right)
\right]\right)\\\\
\sqrt{T}(\hat
b_T-b)&=&\mathcal{L}^{-1}_{\frac{R_T}{T},TQ_T}\left(\dfrac{M_T}{\sqrt{T}}-2T^{3/2}Q_TI_d\dfrac{N_T}{T}
\right).
\end{array}
\right.
\end{equation}
From~\eqref{eq_Z}, we have 
 $$\frac{N_T}{T}=\frac{1}{2T}\log\left(\dfrac{\det[X_T]}{\det[x]}\right)-\Tr[b].$$
As for $-b\in S^{+,*}_d$ the Wishart process $(X_t)_{t\geq 0}$ is stationary with invariant
limit distribution $X_{\infty}$ we easily deduce that $\frac{N_T}{T}$ converges in probability to $-\Tr[b]$ when $T\rightarrow\infty$.
Then, it follows from~\eqref{Birk1} that
\begin{equation}
\label{couple}
(T^{-1}R_T, T^{-1}N_T)\overset{\mathbb P_\theta}\rightarrow
(\rbi,-\Tr[b]),\quad{ as }\;T\rightarrow\infty.
\end{equation}
Hence, we only need to study the asymptotic behavior of the couple $(T^{-1/2}M_T,T^2Q_T)$.
According to Theorem~4.1 in Mayerhofer~\cite{MayerhoferHammamet}, we have  for $ \lambda\geb 0$ and $\Gamma\in
S_d$ 
\begin{equation}
\label{RM}
\mathbb
E_\theta \left[\exp\left(\frac{\lambda}{T}N_T-\frac{\lambda^2}{2T^2}Q_T^{-1}+\frac{1}{\sqrt{T}}\Tr[\Gamma M_T]-\frac{2}{T}\int_0^T\Tr[\Gamma^2
X_s]ds-\frac{2\lambda}{\sqrt{T}}\Tr[\Gamma]\right)\right]=1.
\end{equation}
Now, let us introduce the quantity
\begin{multline*}
A_T=\E_\theta\left[\exp\left(\lambda\frac{N_T}{T}+\lambda\Tr[b]\right)\exp\left(-\frac{\lambda^2}{2T^2}Q_T^{-1}+\frac{1}{\sqrt{T}}\Tr[\Gamma M_T]\right)\right.\\ \times \left.\exp\left(-\frac{2}{T}\int_0^T\Tr[\Gamma^2X_s]ds+2\Tr[\Gamma^2 \rbi]\right)\right].
\end{multline*}
Then, by \eqref{RM} we easily get
$A_T=\exp\left(\lambda\Tr[b]+2\Tr[\Gamma^2 \rbi]+\frac{2\lambda}{\sqrt{T}}\Tr[\Gamma]\right). $
We now write $A_T=\tilde{A}_T+\E_\theta\left[\exp\left(-\frac{\lambda^2}{2T^2}Q_T^{-1}+\frac{1}{\sqrt{T}}\Tr[\Gamma
M_T]\right)\right]$ with
\begin{align*}
\tilde{A}_{T}&=\E_\theta \left[ ( \exp\left(\xi_T \right)-1 )  \exp \left(-\frac{\lambda^2}{2T^2}Q_T^{-1}+\frac{1}{\sqrt{T}}\Tr[\Gamma M_T]\right)\right] \\
\xi_T&=\lambda\frac{N_T}{T}+\lambda\Tr[b] -\frac{2}{T}\int_0^T\Tr[\Gamma^2X_s]ds+2\Tr[\Gamma^2 \rbi ] .
\end{align*}
Cauchy-Schwarz inequality and $Q_T^{-1}>0$ give
$$|\tilde{A}_{T}|\leb \E_\theta^{1/2}[  \exp\left(2\xi_T \right)-2 \exp\left(\xi_T \right)+1  ] \E_\theta^{1/2}\left[\exp\left(\frac{2}{\sqrt{T}}\Tr[\Gamma
M_T]\right)\right]. $$
On the one hand,   Proposition~\ref{prop:lap_Wish} with $m=-b\in \dpos$ gives $$\E_\theta\left[\exp\left(\frac{2}{\sqrt{T}}\Tr[\Gamma M_T]\right)\right] \leb  \E_\theta\left[\exp\left( \frac{2}{T} \Tr[\Gamma^2 R_T]\right)\right]< \infty. $$   On the other hand, we have for any $r\geb 0$,
$$ \E_\theta [  \exp\left(r \xi_T \right)] \leb \E_\theta \left[  \exp\left( \frac{\lambda r }{T}  N_T\right)\right]  \exp(2r\Tr[\Gamma^2\rbi]).$$
From~\eqref{eq_Z}, we have  
$$\E_\theta \left[  \exp\left(  \frac{\lambda r }{T} N_T \right) \right]=\exp(- \lambda r  \Tr[b]) \E_\theta \left[\left(\frac{\det[X_T]}{\det[x]}\right)^{ \frac{\lambda r }{2T}} \right].$$ 
The sublinear growth of the coefficients of the Wishart SDE  and the convergence to a stationary law gives that $\E_\theta \left[\left(\frac{\det[X_T]}{\det[x]}\right)^{\tilde{\lambda}} \right]$ is uniformly bounded in $T>0, \ \tilde{\lambda}<1$ and therefore $\sup_{T>\frac{\lambda r }{2}}\E_\theta \left[\left(\frac{\det[X_T]}{\det[x]}\right)^{ \frac{\lambda r }{2T}} \right]<\infty$.
This gives the uniform integrability of the family $(\exp\left(2\xi_T \right),T>\lambda)$. Then, we deduce from~\eqref{couple} that $\E_\theta[  \exp\left(2\xi_T \right)-2 \exp\left(\xi_T \right)+1  ]\underset{T \rightarrow  + \infty }{\rightarrow } 0$ and thus $ \tilde{A}_{T}\underset{T \rightarrow  + \infty }{\rightarrow } 0$.

 Hence, we obtain
$$\lim_{T\rightarrow\infty}\E_\theta\left[\exp\left(-\frac{\lambda^2}{2T^2}Q_T^{-1}+\frac{1}{\sqrt{T}}\Tr[\Gamma
M_T]\right)\right] =\lim_{T\rightarrow\infty} A_T=\exp\left(\lambda\Tr[b]+2\Tr[\Gamma^2\rbi ]\right). $$
Therefore, we deduce by Lemma~\ref{lemme_matgauss} the following convergence in law
$$ \left(\dfrac { Q_T^{-1}}{T^2},\dfrac{M_T}{\sqrt{T}}\right) \Rightarrow \left(\tau_{_{-\Tr[b]}}, { \sqrt{\rbi} \tilde{\mathbf G}+ \tilde{\mathbf G}\tp  \sqrt{\rbi} }\right) \text{ as } T\rightarrow\infty,$$ where $\tilde{G}_{i,j}$ $1\leb i,j\leb d$ are independent standard normal variables.
Together with~\eqref{couple}, we obtain that \begin{equation}\label{cv_loi_3}
( T^{-1}R_T,T^2Q_T, T^{-1}N_T, T^{-1/2}M_T) \Rightarrow (\rbi,1/\tau_{_{-\Tr[b]}},-\Tr[b], { \sqrt{\rbi} \tilde{\mathbf G}+ \tilde{\mathbf G}\tp  \sqrt{\rbi} }),
\end{equation}
which gives the claim by~\eqref{mle_d+1} and  Lemma~\ref{lemme_matgauss}.
\end{proof}

\subsection{The global MLE estimator of $\theta=( b,\alpha)$ when $b\in\genm$}

We define the linear operators $\bar{\cL}_X,\bar{\cL}_{X,a}:\genm\rightarrow \genm$ by 
$$\bar{\cL}_X(Y)=\cL_X^{-1}(YX+XY\tp)X, \ \bar{\cL}_{X,a}(Y)=\bar{\cL}_X(Y)-a \Tr[Y]I_d.$$ From~\eqref{SDE_Intro2}, we get $Z_T=(\alpha-1-d) Q_T^{-1}+2\Tr[b]T +2 N_T$. This yields with~\eqref{pt_crit_emvgen} to
\begin{equation}\label{erreur_mle_gen}
\left\{
\begin{array}{r c l}
\hat\alpha_T-\alpha &=& 2TQ_T\Tr[b-\hat b_T]+2Q_T N_T \\\\
\int_0^T\bar{\cL}_{X_s}(\hat b_T-b) ds -T^2Q_T \Tr[\hat b_T-b] I_d &=&\int_0^T\cL_{X_s}^{-1}(dM_s)X_s-TQ_TN_TI_d.
\end{array}
\right.
\end{equation}
We now define $$\hat{\cL}_T(Y)=\frac{1}{T}\int_0^T\bar{\cL}_{X_s}(Y) ds -TQ_T \Tr[Y] I_d,$$ which is a linear operator on~$\genm$. By using the convexity of the inverse function, there exists $\gamma \in(0,1)$ that depends on $(X_s,s\in[0,T])$ such that $TQ_T=\frac{\gamma}{T} \int_0^T \frac{1}{\Tr[X_s^{-1}]}ds$. We get  $\hat{\cL}_T(Y)= \frac{1}{T}\int_0^T\bar{\cL}_{X_s,\frac{\gamma}{\Tr[X_s^{-1}]}}(Y)ds$. By Lemma~\ref{lemme-autoadjoint2},  $\hat{\cL}_T$ is self adjoint and positive. It is even positive definite since $\Tr[\hat{\cL}_T(Y)\tp Y]=0$ implies by using Lemmas~\ref{lemme-autoadjoint2} that $YX_s+X_sY\tp=0$ a.s. on $[0,T]$ under $\Px_\theta$, and therefore the quadratic variation of $\Tr[uYX_s]$ is equal to zero for any $u\in \symm$. This gives $\int_0^T\Tr[(uY+Y\tp u)X_s(uY+Y\tp u)]ds=0$ and thus $uY+ Y\tp u=0$ for all $u\in \symm$, which necessarily implies $Y=0$.  Thus, we rewrite~\eqref{erreur_mle_gen} as
\begin{equation}\label{erreur_mle_gen_2}
\left\{
\begin{array}{r c l}
\sqrt{T}(\hat\alpha_T-\alpha) &=& -2TQ_T\Tr[\sqrt{T}(\hat b_T-b)]+2TQ_T \frac{N_T}{\sqrt{T}}   \\
\sqrt{T}(\hat b_T-b)  &=&\hat{\cL}_T^{-1}\left( \frac{1}{\sqrt{T}}\int_0^T \cL_{X_s}^{-1}(dM_s)X_s  -TQ_T \frac{N_T}{\sqrt{T}}I_d \right).
\end{array}
\right.
\end{equation}
We will assume $-(b+b\tp)\in \dpos$ and know from  Lemma~\ref{lem_ergo} that $X_T$ converges in law under $\Px_\theta$ to the stationary law $X_\infty\sim WIS_d(0,\alpha,0,\sqrt{2q_\infty};1/2)$. We define \begin{equation}\label{ellhat}\hat{\cL}_\infty(Y)=\E_\theta[\bar{\cL}_{X_{\infty}}(Y)]-\qbi \Tr[Y]I_d.
\end{equation}
Note that for $Y\in\symm$, $\hat{\cL}_\infty(Y)=Y\rbi-\qbi\Tr[Y]I_d$.
From the convexity of the inverse function, $\qbi=\gamma \E_\theta[1/\Tr[X_\infty^{-1}]]$ with $\gamma\in(0,1)$, and thus $\hat{\cL}_\infty(Y)=\E_\theta[\bar{\cL}_{X_{\infty},\frac{\gamma}{\Tr[X_\infty^{-1}]}}(Y)]$ is a self-adjoint positive operator by Lemma~\ref{lemme-autoadjoint2}. It is even positive definite since $\Tr[\hat{\cL}_\infty(Y)\tp Y]=0$ implies by Lemma~\ref{lemme-autoadjoint2} that $YX_\infty+X_\infty Y\tp=0$ almost surely. Since the law $X_\infty$ has a positive density on $\dpos$, this gives $Yu+u Y\tp=0$ for any $u\in \symm$ and thus $Y=0$.

\begin{mytheo}\label{theo_b_gen}
 Assume $-(b+b\tp)\in S^{+,*}_d $ and $\alpha>d+1$. Then, under $\Px_\theta$, $\left(\sqrt{T}(\hat
b_T-b),T(\hat \alpha_T-\alpha)\right)$ converges in law when $T\rightarrow + \infty$
to the centered Gaussian vector $(\mathbf G, H)$ that takes values in $\genm \times \R$ and has the following Laplace transform: for $c,\lambda \in \genm \times \R$,
\begin{align}
\E_\theta[\exp(\Tr[c\tp {\mathbf{G}}]+\lambda {H})] \label{laplace_lim_bgen}
&=\exp\bigg(\frac{2 \qbi  \lambda^2}{1-\qbi \Tr[\rbi^{-1}]} -     \frac{2 \qbi \lambda}{1-\qbi \Tr[\rbi^{-1}]} \Tr[ c \rbi^{-1}]  \\
&+\frac{1}{4}\E_\theta[\Tr[\cL_{X_\infty }^{-1}( \hat{\cL}_\infty^{-1}(c)X_\infty + X_\infty  \hat{\cL}_\infty^{-1}(c)\tp ) ( \hat{\cL}_\infty^{-1}(c)X_\infty + X_\infty  \hat{\cL}_\infty^{-1}(c)\tp ) ]] \nonumber \\
&+\frac{\Tr[c \rbi^{-1}]\qbi}{1-\qbi\Tr[\rbi^{-1}]}\left( \frac{1}{2} \frac{\Tr[c \rbi^{-1}]}{1-\qbi\Tr[\rbi^{-1}]}-\Tr[\hat{\cL}_\infty^{-1}(c)]\right)   \bigg) \nonumber
\end{align}

\end{mytheo}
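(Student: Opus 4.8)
The plan is to adapt the proof of Theorem~\ref{thm_MLE_global_ergo} to the general setting, starting from the reduced error system~\eqref{erreur_mle_gen_2}. There both normalized errors are expressed through the real martingale $N_T$, the $\genm$-valued martingale $\Pi_T:=\int_0^T\cL_{X_s}^{-1}(dM_s)X_s$, the ergodic averages $R_T/T$ and $TQ_T$, and the random operator $\hat\cL_T$. I would first prove a joint central limit theorem for $(\Pi_T/\sqrt T,\,N_T/\sqrt T)$, then combine it with the almost sure limits $R_T/T\to\rbi$ and $TQ_T\to\qbi$ (from~\eqref{Birk1}--\eqref{Birk2}) and with $\hat\cL_T\to\hat\cL_\infty$, pass to the limit in~\eqref{erreur_mle_gen_2} via Slutsky's theorem and the continuity of operator inversion, and finally compute the Laplace transform of the limit $(\mathbf G,H)$.

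For the martingale CLT, I would compute the brackets of $\Pi$ and $N$. Fixing $Y\in\genm$ and writing $\Tr[Y\tp\Pi_t]=\int_0^t\Tr[\psi_s\,dM_s]$ with $\psi_s=\tfrac12\cL_{X_s}^{-1}(YX_s+X_sY\tp)\in\symm$ (this uses the self-adjointness of $\cL_{X_s}^{-1}$ from Lemma~\ref{lemme-autoadjoint}), a direct calculation from $dM_s=\sqrt{X_s}dW_s+dW_s\tp\sqrt{X_s}$ yields $d\langle\Tr[Y\tp\Pi]\rangle_s=\Tr[(\cL_{X_s}^{-1}(A_s))^2X_s]\,ds$ with $A_s:=YX_s+X_sY\tp$, while~\eqref{crochetsMN} gives $d\langle\Pi_{i,j},N\rangle_s=2(\cL_{X_s}^{-1}(I_d)X_s)_{i,j}\,ds$ and $\langle N\rangle_s=Q_s^{-1}$. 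Dividing by $T$ and invoking Birkhoff's theorem, each normalized bracket converges almost surely to a deterministic limit given as an expectation under the stationary law $X_\infty$; the exponential moment bound of Proposition~\ref{prop:lap_Wish}, used exactly as in the proof of Theorem~\ref{alphad}, provides the uniform integrability needed to secure the Gaussian limit. The martingale CLT (Kutoyants~\cite{KutoyantsBook}, Proposition~1.21) then gives $(\Pi_T/\sqrt T,\,N_T/\sqrt T)\Rightarrow(\tilde{\mathbf G},\tilde H)$, a centered Gaussian with $\E_\theta[\tilde H^2]=\qbi^{-1}$, $\E_\theta[\tilde{\mathbf G}\tilde H]=2\E_\theta[\cL_{X_\infty}^{-1}(I_d)X_\infty]$ and $\E_\theta[(\Tr[Y\tp\tilde{\mathbf G}])^2]=\tfrac12\E_\theta[\Tr[\cL_{X_\infty}^{-1}(A_\infty)A_\infty]]$, where $A_\infty:=YX_\infty+X_\infty Y\tp$.

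Since $\hat\cL_T$ is positive definite and converges to the positive definite operator $\hat\cL_\infty$ of~\eqref{ellhat} (both facts are established just before the statement), inversion is continuous at $\hat\cL_\infty$, and Slutsky's theorem applied to~\eqref{erreur_mle_gen_2} identifies the limiting errors as $\mathbf G=\hat\cL_\infty^{-1}(\tilde{\mathbf G}-\qbi\tilde H\,I_d)$ and $H=2\qbi(\tilde H-\Tr[\mathbf G])$, with joint convergence to $(\mathbf G,H)$. To obtain~\eqref{laplace_lim_bgen}, I would mirror the symmetric computation: rewrite $\Tr[c\tp\mathbf G]+\lambda H=\Tr[(c-2\lambda\qbi I_d)\tp\mathbf G]+2\lambda\qbi\tilde H$, set $\tilde c=\hat\cL_\infty^{-1}(c-2\lambda\qbi I_d)$ so that self-adjointness turns the exponent into $\Tr[\tilde c\tp\tilde{\mathbf G}]+\qbi(2\lambda-\Tr[\tilde c])\tilde H$, and apply the Gaussian Laplace formula. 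Substituting $\tilde c=\hat\cL_\infty^{-1}(c)-\tfrac{2\lambda\qbi}{1-\qbi\Tr[\rbi^{-1}]}\rbi^{-1}$, which relies on the identity $\hat\cL_\infty^{-1}(I_d)=\tfrac{1}{1-\qbi\Tr[\rbi^{-1}]}\rbi^{-1}$, the $\lambda$-free part of the variance contribution $\tfrac12\E_\theta[(\Tr[\tilde c\tp\tilde{\mathbf G}])^2]$ reproduces exactly the $\tfrac14\E_\theta[\cdots]$ term of~\eqref{laplace_lim_bgen}, while the $\lambda$-linear and $\lambda^2$ parts combine with the covariance and $\tilde H$-variance contributions and the trace identities $\Tr[\hat\cL_\infty^{-1}(c)]$ and $\Tr[\hat\cL_\infty^{-1}(c)\rbi^{-1}]$ of Lemma~\ref{lemme-autoadjoint2} to produce the remaining terms.

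The principal difficulty is the martingale-CLT step. Unlike the symmetric case, where the driving martingale is $M$ and its bracket is the ergodic average $R_T$, here the operator-weighted martingale $\Pi_T$ has bracket $\int_0^T\Tr[(\cL_{X_s}^{-1}(A_s))^2X_s]\,ds$, a genuinely nonlinear functional of the path; identifying its almost sure limit under $X_\infty$ and establishing the joint uniform integrability that legitimizes the Gaussian limit is where the bulk of the work lies. Once the joint convergence of $(\Pi_T/\sqrt T,\,N_T/\sqrt T,\,R_T/T,\,TQ_T,\,\hat\cL_T)$ is secured, the passage to $(\mathbf G,H)$ is a continuity argument, and the reconciliation of the cross terms in~\eqref{laplace_lim_bgen}---relating the variance-derived and covariance-derived contributions to the clean $\rbi^{-1}$ form---is lengthy but routine linear algebra of the same nature as in Theorem~\ref{thm_MLE_global_ergo}.
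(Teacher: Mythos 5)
Your proposal follows the paper's proof essentially step for step: your $\Pi_T$ is the paper's $\hat M_T=\int_0^T\cL_{X_s}^{-1}(dM_s)X_s$, and your bracket computations, martingale CLT for $(\hat M_T/\sqrt T, N_T/\sqrt T)$, Birkhoff--Slutsky passage through~\eqref{erreur_mle_gen_2}, and substitution $\tilde c=\hat\cL_\infty^{-1}(c-2\lambda\qbi I_d)$ with $\hat\cL_\infty^{-1}(I_d)=\frac{1}{1-\qbi\Tr[\rbi^{-1}]}\rbi^{-1}$ coincide with the published argument, which performs the same final bookkeeping via~\eqref{lapl_intermed_bgen} and~\eqref{calcul_trace_intermed_gen} with $m=\hat\cL_\infty^{-1}(c)$ and $s=-\frac{2\lambda\qbi}{1-\qbi\Tr[\rbi^{-1}]}\rbi^{-1}$. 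The one deviation is that your uniform-integrability step via Proposition~\ref{prop:lap_Wish} is superfluous rather than the principal difficulty: the martingale CLT needs only convergence in probability of the normalized brackets, which Birkhoff's theorem delivers once the bracket integrand is integrable under the stationary law (e.g. from the bound $\Tr[(\cL_X^{-1}(YX+XY\tp))^2X]\leb\Tr[Y\tp YX]$), which is why the paper dispenses with it.
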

\begin{proof}

 From the ergodic Birkhoff's theorem,  $\hat{\cL}_T$ converges almost surely to $\hat{\cL}_\infty$, and thus  $\hat{\cL}_T^{-1}$ converges almost surely to $\hat{\cL}_\infty^{-1}$. We define the martingale $\hat{M}_T=\int_0^T\cL_{X_s}^{-1}(dM_s)X_s$. We have
\begin{align*}
d (\hat{M}_s)_{i,j} &=\Tr[e_{i,j}\tp\cL_{X_s}^{-1}(dM_s)X_s  ]=\frac{1}{2}\Tr[\cL_{X_s}^{-1}( e_{i,j}X_s+ X_s e_{i,j}\tp )dM_s] \\&=\frac{1}{2}\sum_{1\le k,l\le d} (\cL_{X_s}^{-1}( e_{i,j}X_s+ X_s e_{i,j}\tp ))_{k,l} (dM_s)_{k,l}.
\end{align*}
We get from~\eqref{brackets} that
\begin{align*}
\langle d (\hat{M}_s)_{i,j},d (\hat{M}_s)_{i',j'} \rangle &=\frac{1}{2} \Tr\left[\cL_{X_s}^{-1}( e_{i,j}X_s+ X_s e_{i,j}\tp ) ( e_{i',j'}X_s+ X_s e_{i',j'}\tp ) \right]ds,\\
\langle d (\hat{M}_s)_{i,j},d N_s \rangle &= \Tr[\cL_{X_s}^{-1}( e_{i,j}X_s+ X_s e_{i,j}\tp )]ds =\delta_{i,j}ds.
\end{align*}
From the central limit theorem for martingales,    $(\frac{\hat{M}_T}{\sqrt{T}},  \frac{N_T}{\sqrt{T}})$  converges in law under $\Px_\theta$  towards a centered Gaussian vector $(\hat{\mathbf G}, \hat{H})$ taking values in $\genm\times \R$ such that
\begin{align}
&\E_\theta(\hat{\mathbf{G}}_{i,j}\hat{\mathbf{G}}_{i',j'})=\frac{1}{2}
\E_\theta\left[  \Tr\left[ \cL_{X_\infty }^{-1}( e_{i,j}X_\infty + X_\infty  e_{i,j}\tp ) ( e_{i',j'}X_\infty + X_\infty  e_{i',j'}\tp ) \right] \right] \label{loi_Ghat}\\
&\E_\theta(\hat{\mathbf{G}}_{i,j}\hat{H})=\delta_{i,j}  \text{ and } \E_\theta(\hat{H}^2)=  \qbi^{-1}. \nonumber
\end{align}
We thus have the following Laplace transform for $c\in \genm$, $\lambda \in \R$,
\begin{align}\label{lapl_intermed_bgen} \E_\theta[\exp(\Tr[c\tp\hat{\mathbf{G}}]+\lambda \hat{H})]=\exp\left(\frac{1}{4}\E_\theta[\Tr[\cL_{X_\infty }^{-1}( cX_\infty + X_\infty  c\tp ) ( cX_\infty + X_\infty  c\tp ) ]] +\lambda \Tr[ c ] +\frac{1}{2}\lambda^2\qbi^{-1} \right).
\end{align}
By using Slutsky's Theorem, we get from~\eqref{erreur_mle_gen_2} that  $\left(\sqrt{T}( \hat b_T-b, \hat \alpha_T-\alpha)\right)$ converges in law under $\Px_\theta$  when $T\rightarrow +\infty$ to the centered Gaussian vector 
$$(\mathbf G, H)=\left(\hat{\cL}_\infty^{-1}(\hat{\mathbf G}- \qbi \hat{H} I_d ) , 2 \qbi \left(\hat{H}-\Tr[\hat{\cL}_\infty^{-1}(\hat{\mathbf G}- \qbi \hat{H} I_d )] \right) \right) .$$ 
Now, we use that $\hat{\cL}_\infty^{-1}(I_d)=\frac{1}{1-\qbi\Tr[\rbi^{-1}]}\rbi^{-1}$ and that $\hat{\cL}_\infty$ is self-adjoint to get for $c\in \genm$, 
$$\Tr[c \tp {\mathbf G}]=\Tr[\hat{\cL}_\infty^{-1}(c) \tp \hat{\mathbf G}]-\frac{\qbi\hat{H}\Tr[c\tp \rbi^{-1}]}{1-\qbi\Tr[\rbi^{-1}]}, \ H=\frac{2\qbi }{1-\qbi\Tr[\rbi^{-1}]} (\hat{H}-\Tr[\rbi^{-1} \hat{\mathbf G}]) ].$$
From~\eqref{lapl_intermed_bgen}, we obtain after some calculations~\eqref{laplace_lim_bgen}, using in particular that for $m\in \genm,s\in \symm$,
\begin{align}
&\E_\theta[\Tr[\cL_{X_\infty }^{-1}( (m+s)X_\infty + X_\infty  (m+s)\tp ) ( (m+s)X_\infty + X_\infty  (m+s)\tp )]] \label{calcul_trace_intermed_gen}\\
&=\E_\theta[\Tr[\cL_{X_\infty }^{-1}( mX_\infty + X_\infty  m\tp ) ( mX_\infty + X_\infty  m\tp )]]+2\Tr[s(m \rbi+ \rbi m\tp)]+2\Tr[s^2\rbi]\nonumber
\end{align}
and taking $m=\hat{\cL}_\infty^{-1}(c)$ and $s=-\lambda \frac{2 \qbi}{1-\qbi\Tr[\rbi^{-1}]} \rbi^{-1}$. 
\end{proof}

\begin{rmrk} It is interesting to compare Theorems~\ref{thm_MLE_global_ergo} and~\ref{theo_b_gen} and see that the asymptotic variance of $\sqrt{T}(\hat{\alpha}_T-\alpha)$ is the same in both cases. Instead, for the estimation of the symmetric part of $b$, we can check that the asymptotic variance is greater when we do not know a priori that $b$ is symmetric. For $c \in \symm$, we have  $c=\frac{1}{2}[\hat{\cL}_\infty^{-1}(c)\rbi +\rbi \hat{\cL}_\infty^{-1}(c) \tp ]-\qbi \Tr[\hat{\cL}_\infty^{-1}(c)]I_d$ and   $c=\mathcal{L}^{-1}_{\rbi ,\qbi}(c)\rbi +\rbi\mathcal{L}^{-1}_{\rbi ,\qbi}(c)-2\qbi\Tr[\mathcal{L}^{-1}_{\rbi ,\qbi}(c)]I_d.$ Multiplying by $\rbi^{-1}$, we get 
$$\Tr[\hat{\cL}_\infty^{-1}(c)]=\frac{\Tr[c \rbi^{-1}]}{1-\qbi\Tr[\rbi^{-1}]}=2\Tr[\mathcal{L}^{-1}_{\rbi ,\qbi}(c)],$$
and then $\hat{\cL}_\infty^{-1}(c)=2\mathcal{L}^{-1}_{\rbi ,\qbi}(c)+\Delta$ with $\Delta \rbi+\rbi \Delta\tp=0$. This gives from~\eqref{calcul_trace_intermed_gen}
\begin{align*}
&\frac{1}{4}\E_\theta[\Tr[\cL_{X_\infty }^{-1}( \hat{\cL}_\infty^{-1}(c)X_\infty + X_\infty  \hat{\cL}_\infty^{-1}(c)\tp ) ( \hat{\cL}_\infty^{-1}(c)X_\infty + X_\infty  \hat{\cL}_\infty^{-1}(c)\tp ) ]] \\
&+\frac{\Tr[c \rbi^{-1}]\qbi}{1-\qbi\Tr[\rbi^{-1}]}\left( \frac{1}{2} \frac{\Tr[c \rbi^{-1}]}{1-\qbi\Tr[\rbi^{-1}]}-\Tr[\hat{\cL}_\infty^{-1}(c)]\right)\\
=& 2\Tr[\mathcal{L}^{-1}_{\rbi ,\qbi}(c)^2\rbi] + \frac{1}{4}\E_\theta[\Tr[\cL_{X_\infty }^{-1}( \Delta X_\infty + X_\infty  \Delta \tp ) ( \Delta X_\infty + X_\infty  \Delta\tp ) ]] -2\qbi \Tr[\mathcal{L}^{-1}_{\rbi ,\qbi}(c)]^2\\
=& \Tr[\mathcal{L}^{-1}_{\rbi ,\qbi}(c) c] +\frac{1}{4}\E_\theta[\Tr[\cL_{X_\infty }^{-1}( \Delta X_\infty + X_\infty  \Delta \tp ) ( \Delta X_\infty + X_\infty  \Delta\tp ) ]] \ge \Tr[\mathcal{L}^{-1}_{\rbi ,\qbi}(c) c] ,
\end{align*}
since $\cL_{X_\infty }^{-1}$ is a self-adjoint positive operator. 
\end{rmrk}

\subsection{The MLE estimator of $b$}

When $\alpha \in [d-1,d+1)$, we are no longer able to study the convergence of the MLE of $\alpha$. It is however still possible to get the speed of convergence of the MLE of~$b$.

\begin{mytheo}\label{thm_bseul_ergo}
Assume that $-b\in S^{+,*}_d$ and  $\alpha \geb d-1$. For $T>0$,  we consider $\hat{b}_T$ defined by (\ref{MLE_bseul}). Then, under $\Px_\theta$, $\sqrt{T}(\hat{b}_T-b)$ converges in law to a centered Gaussian vector $\mathbf G$ on $\symm$ with the following Laplace transform $\E_\theta \left[ \exp\left( \Tr[c \mathbf G]  \right) \right]= \exp\left(   \Tr[  c  \mathcal{L}^{-1}_{\rbi }(c)]  \right)$, $c\in \symm$.

Assume that $-(b+b\tp)\in S^{+,*}_d$ and  $\alpha \geb d-1$. For $T>0$,  we consider $\hat{b}_T$ defined by (\ref{MLE_bseul_gen}). Then, under $\Px_\theta$, $\sqrt{T}(\hat{b}_T-b)$ converges in law to a centered Gaussian vector $\mathbf G$ on $\genm$ with the following Laplace transform: for  $c\in \genm$, $\E_\theta \left[ \exp\left( \Tr[c\tp \mathbf G]  \right) \right]$
$$= \exp\left(  \frac{1}{4}\E_\theta[\Tr[\cL_{X_\infty }^{-1}( \check{\cL}_\infty^{-1}(c)X_\infty + X_\infty  \check{\cL}_\infty^{-1}(c)\tp ) ( \check{\cL}_\infty^{-1}(c)X_\infty + X_\infty  \check{\cL}_\infty^{-1}(c)\tp ) ]]  \right),$$
with $\check{\cL}_\infty(c)=\E_\theta[\bar{\cL}_{X_{\infty}}(Y)]$.
\end{mytheo}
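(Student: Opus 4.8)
The plan is to linearize the estimation error in both cases, writing $\sqrt{T}(\hat b_T-b)$ as a random linear operator applied to a rescaled martingale, and then to combine Birkhoff's ergodic theorem (for the operator) with the martingale central limit theorem (for the martingale) through Slutsky's theorem, exactly as in the proofs of Theorems~\ref{thm_MLE_global_ergo} and~\ref{theo_b_gen}. The point that enlarges the range to $\alpha\geb d-1$ is that estimating $b$ alone never involves $Q_T$, $N_T$ or $Z_T$ (all of which require $\alpha\geb d+1$); only $R_T$ and the martingale $M_T$ (resp.\ $\hat M_T$) appear, and these are well-defined with finite limiting second moments as soon as $\rbi=\E_\theta(X_\infty)$ is finite, which holds for $\alpha\geb d-1$.

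For the symmetric case, I would start from~\eqref{eq_Z}, which gives $X_T-x-\alpha T I_d=\mathcal{L}_{R_T}(b)+M_T$, so that~\eqref{MLE_bseul} yields at once $\hat b_T-b=\mathcal{L}_{R_T}^{-1}(M_T)$ and, by the linearity of $X\mapsto\mathcal{L}_X$, $\sqrt{T}(\hat b_T-b)=\mathcal{L}_{R_T/T}^{-1}(M_T/\sqrt{T})$. By~\eqref{Birk1}, $R_T/T\to\rbi\in\dpos$ almost surely, and by the martingale central limit theorem $M_T/\sqrt{T}$ converges in law to the Gaussian $\tilde{\mathbf G}$ of~\eqref{loi_Gtilde}; since $(X,Y)\mapsto\mathcal{L}_X^{-1}(Y)$ is continuous by Lemma~\ref{lemme-linear}, Slutsky's theorem gives $\sqrt{T}(\hat b_T-b)\Rightarrow\mathcal{L}_{\rbi}^{-1}(\tilde{\mathbf G})$. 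To identify the limit law I would put $\tilde c=\mathcal{L}_{\rbi}^{-1}(c)$, use the self-adjointness of $\mathcal{L}_{\rbi}^{-1}$ (Lemma~\ref{lemme-autoadjoint}) to get $\Tr[c\,\mathcal{L}_{\rbi}^{-1}(\tilde{\mathbf G})]=\Tr[\tilde c\,\tilde{\mathbf G}]$, then set $\lambda=0$ in~\eqref{lap_tgth} to obtain $\exp(2\Tr[\tilde c^2\rbi])$, and finally simplify $2\Tr[\tilde c^2\rbi]=\Tr[\tilde c\,\mathcal{L}_{\rbi}(\tilde c)]=\Tr[\tilde c\,c]=\Tr[c\,\mathcal{L}_{\rbi}^{-1}(c)]$, which is the announced transform (the $\alpha=d+1$ limit of Theorem~\ref{thm_MLE_global_ergo}).

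For the general case, substituting $dX_s=(\alpha I_d+bX_s+X_sb\tp)ds+dM_s$ into~\eqref{MLE_bseul_gen} and using $\mathcal{L}_{X_s}^{-1}(\alpha I_d)X_s=\tfrac{\alpha}{2}I_d$ together with the definition of $\bar{\cL}_X$, the drift and the $\tfrac{\alpha T}{2}I_d$ term cancel, leaving $\int_0^T\bar{\cL}_{X_s}(\hat b_T-b)\,ds=\hat M_T$, i.e.\ $\sqrt{T}(\hat b_T-b)=\check{\cL}_T^{-1}(\hat M_T/\sqrt{T})$ with $\check{\cL}_T(Y)=\tfrac1T\int_0^T\bar{\cL}_{X_s}(Y)\,ds$. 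Birkhoff's theorem gives $\check{\cL}_T\to\check{\cL}_\infty$ almost surely, and I must check that $\check{\cL}_\infty$ is invertible. This is the analogue of the argument for $\hat{\cL}_\infty$ in Theorem~\ref{theo_b_gen}, but without the $\qbi$ term which is unavailable for $\alpha<d+1$: since $\bar{\cL}_{X_\infty}$ is self-adjoint and positive (the $a=0$ instance of Lemma~\ref{lemme-autoadjoint2}), so is $\check{\cL}_\infty$, and $\Tr[\check{\cL}_\infty(Y)\tp Y]=0$ forces $\Tr[\bar{\cL}_{X_\infty}(Y)\tp Y]=0$ a.s., hence $YX_\infty+X_\infty Y\tp=0$ a.s.; as $X_\infty$ has a positive density on $\dpos$ this gives $Yu+uY\tp=0$ for all $u\in\symm$ and thus $Y=0$. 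The martingale central limit theorem then produces the Gaussian $\hat{\mathbf G}$ of~\eqref{loi_Ghat}, whose Laplace transform is~\eqref{lapl_intermed_bgen} at $\lambda=0$, Slutsky gives $\sqrt{T}(\hat b_T-b)\Rightarrow\check{\cL}_\infty^{-1}(\hat{\mathbf G})$, and putting $\tilde c=\check{\cL}_\infty^{-1}(c)$ and using self-adjointness yields the stated transform directly.

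The only substantive step beyond bookkeeping is the finiteness of the limiting covariances for $\alpha\in[d-1,d+1)$, i.e.\ that the martingale central limit theorem still applies in this wider range. In the symmetric case this is immediate since~\eqref{loi_Gtilde} is linear in $\rbi$. In the general case the bracket of $\hat M$ involves $\mathcal{L}_{X_s}^{-1}(e_{i,j}X_s+X_se_{i,j}\tp)$; although $\mathcal{L}_{X_s}^{-1}$ blows up like $X_s^{-1}$ as the eigenvalues of $X_s$ vanish, the argument $e_{i,j}X_s+X_se_{i,j}\tp$ vanishes at the same linear rate, so in the eigenbasis of $X_s$ (with eigenvalues $\mu_p$) the $(p,q)$ entry of $\mathcal{L}_{X_s}^{-1}(e_{i,j}X_s+X_se_{i,j}\tp)$ is a combination of $\mu_p$ and $\mu_q$ divided by $\mu_p+\mu_q$, hence bounded uniformly in $X_s$; the remaining factor grows only linearly in $X_s$, so the bracket integrand is $O(\|X_s\|)$ and its stationary expectation is finite precisely because $\rbi$ is finite. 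This is what allows the result down to $\alpha=d-1$ and is the step I expect to require the most care; it should be complemented by the a.s.\ invertibility of $R_T$ (cf.\ the footnote) and of $\check{\cL}_T$, which guarantee that $\hat b_T$ is well-defined for each fixed $T$ before passing to the limit.
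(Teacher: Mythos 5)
Your proof is correct and follows essentially the same route as the paper's: write the error as $\mathcal{L}_{R_T/T}^{-1}(M_T/\sqrt{T})$ in the symmetric case and $\check{\cL}_T^{-1}(\hat M_T/\sqrt{T})$ in the general case, then combine Birkhoff's ergodic theorem, the martingale central limit theorem and Slutsky's theorem, identifying the limit Laplace transform through the self-adjointness of $\mathcal{L}_{\rbi}^{-1}$ and $\check{\cL}_\infty^{-1}$. The only (cosmetic) difference is that you establish positive definiteness of $\check{\cL}_\infty$ directly from Lemma~\ref{lemme-autoadjoint2} with $a=0$ and the positive density of $X_\infty$, and you make explicit the finiteness of the limiting brackets on the range $\alpha\in[d-1,d+1)$, whereas the paper argues invertibility by comparison with $\hat{\cL}_T$ and $\hat{\cL}_\infty$ and leaves those integrability checks implicit; your version is, if anything, slightly more self-contained for $\alpha\geb d-1$.
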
 
\begin{proof}
We could prove the result for (\ref{MLE_bseul}) by using the explicit Laplace transform Proposition~\ref{prop:lap_Wish}. Here, we use the same arguments as before based on the ergodic property. From~\eqref{MLE_bseul}, we have 
$$\sqrt{T}(\hat{b}_T-b)= \mathcal{L}_{\frac{R_T}{T}}^{-1}\left(\frac{1}{\sqrt{T}}\left(X_T-x-bR_T-R_Tb -\alpha T I_d \right) \right)=\mathcal{L}_{\frac{R_T}{T}}^{-1}\left(\frac{M_T}{\sqrt{T}}\right).$$
 As in the proof of Theorem~\ref{thm_MLE_global_ergo},  $\frac{M_T}{\sqrt{T}}$ converges in law to the centered Gaussian vector $\tilde{\mathbf{G}}$ defined by~\eqref{loi_Gtilde}. Slutsky's theorem and~\eqref{Birk1} give then the convergence of $\sqrt{T}(\hat{b}_T-b)$ to $\mathbf G=\mathcal{L}_{\rbi}^{-1}\left(\tilde{\mathbf{G}}\right)$, whose Laplace transform is given by Lemma~\ref{lemme_matgauss}.

Similarly, we get from~\eqref{MLE_bseul_gen} that $\sqrt{T}(\hat{b}_T-b)= \check{\cL}_T^{-1}(\hat{M}_T/\sqrt{T})$ with $\check{\cL}_T(Y)=\frac{1}{T}\int_0^T\bar{\cL}_{X_s}(Y) ds$. We easily check that $\Tr[\check{\cL}_T(Y)Y]\ge\Tr[\hat{\cL}_T(Y)Y]$ and $\Tr[\check{\cL}_\infty(Y)Y]\ge\Tr[\hat{\cL}_\infty(Y)Y]$ for $Y\in \genm$. Therefore, $\check{\cL}_T$ and $\check{\cL}_\infty$ are positive definite and thus invertible. Besides, the ergodic theorem gives that $\check{\cL}_T^{-1}$ converges almost surely to $\check{\cL}_\infty$. The result follows from~\eqref{lapl_intermed_bgen} and the self-adjoint property of $\check{\cL}_\infty$.
\end{proof}

\section[Statistical Inference:  some nonergodic cases] {Statistical Inference of the Wishart process:  some nonergodic cases}\label{Sec_nonergo}

This section studies the convergence of the MLE in the case $b=b_0 I_d$ with $b_0\geb 0$. When $b_0=0$ and $\alpha \geb d+1$, we are able to describe the rate of convergence of the MLE of $( b,\alpha)$ given by~\eqref{eq:estimator_couple}, when $b$ is known to be symmetric. When $b_0>0$ and $\alpha\geb d-1$, we can also obtain the rate of convergence of the MLE of $b$ given by~\eqref{MLE_bseul}. Last, when $b$ is known a priori to be diagonal,  the MLE of $b$ has a simpler form and we can describe precisely its convergence.

\subsection{The global MLE of $\theta=( b,\alpha)$ when $b=0$}
The following result provides the asymptotic behavior of the estimator of the couple when $\alpha >d+1$ and $b=0$ in (\ref{SDE_Intro2}).

\begin{mytheo}\label{b0}
Assume that $b=0$ and  $\alpha>d+1$. Let $(\hat{b}_T, \hat{\alpha}_T)$ be the MLE defined by~(\ref{eq:estimator_couple}). Then, $( T(\hat b_T -b ),\sqrt{\log(T)}(\hat \alpha_T-\alpha))$ converges in law under $\Px_\theta$  when $T\rightarrow +\infty$ to $$\left(\mathcal{L}^{-1}_{R_1^0}\left( X_1^0-\alpha I_d  \right) ,2 \sqrt{\frac{\alpha -(d+1)}{d}}G \right),$$ where 
$X^0_t=\alpha t I_d + \int_0^t \sqrt{X^0_s} dW_s +dW_s\tp \sqrt{X^0_s}$ is a Wishart process with the same parameters but starting from~$0$, $R_t^0 =\int_0^t X_s^0ds$ and $G\sim \mathcal{N}(0,1)$ is an independent standard Normal variable. 
\end{mytheo}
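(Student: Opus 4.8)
The plan is to exploit the Brownian scaling of the driftless ($b=0$) Wishart equation together with the exponential martingale behind~\eqref{RM}. Since $b=0$ is symmetric, I would start from the error equations~\eqref{erreur_mle}, which here read $\hat\alpha_T-\alpha=-2TQ_T\Tr[\hat b_T]+2Q_TN_T$ and $\mathcal{L}_{R_T}(\hat b_T)=2T^2Q_T\Tr[\hat b_T]I_d+M_T-2TQ_TN_TI_d$. First I would rescale: setting $\tilde X_u=X_{Tu}/T$ for $u\in[0,1]$, one checks from~\eqref{SDE_Intro2} that $\tilde X$ is again a Wishart process with the same $\alpha$ and $b=0$ but started from $x/T\to 0$, so that $R_T=T^2\tilde R_1$, $X_T=T\tilde X_1$ and $M_T=T\tilde M_1$ with $\tilde M_1=\tilde X_1-\tilde X_0-\alpha I_d$. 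Dividing the second equation by $T$ and writing $\beta_T=T\hat b_T$ turns it into $\mathcal{L}_{\tilde R_1}(\beta_T)=2Q_T\Tr[\beta_T]I_d+M_T/T-2Q_TN_TI_d$, while the first gives $\sqrt{\log T}(\hat\alpha_T-\alpha)=-2\sqrt{\log T}\,Q_T\Tr[\beta_T]+2\sqrt{\log T}\,Q_TN_T$. Everything then reduces to the joint asymptotics of $(\tilde X_1,\tilde R_1,M_T/T,N_T/\sqrt{\log T})$ together with the exact rate of $Q_T$.

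To pin down $Q_T$, I would introduce the time change $U_u=e^{-u}X_{e^u}$. A direct computation shows that $U$ solves the Wishart equation with drift $\alpha I_d-U_u$, i.e. it is the ergodic Wishart process with $b=-\tfrac12 I_d$; its stationary law is exactly $WIS_d(0,\alpha,0,I_d;1)$, and by~\eqref{val_qbi} applied to $U$ one gets $\E_\theta[\Tr[U_\infty^{-1}]]=\tfrac{d}{\alpha-d-1}$, finite precisely because $\alpha>d+1$. Since $Q_T^{-1}=\int_0^T\Tr[X_s^{-1}]ds=\int_0^1\Tr[X_s^{-1}]ds+\int_0^{\log T}\Tr[U_u^{-1}]du$, Birkhoff's theorem for $U$ yields $Q_T^{-1}/\log T\to\kappa:=\tfrac{d}{\alpha-d-1}$ a.s. In particular $Q_T\to 0$, and since $\langle N\rangle_T=Q_T^{-1}$ forces $N_T=O_{\Px_\theta}(\sqrt{\log T})$, the terms $2Q_TN_TI_d$, $2Q_T\Tr[\beta_T]I_d$ and $2\sqrt{\log T}\,Q_T\Tr[\beta_T]$ are all negligible. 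Hence the two scales decouple: $\mathcal{L}_{\tilde R_1}(\beta_T)\to\mathcal{L}_{R_1^0}(\beta_\infty)=X_1^0-\alpha I_d$, giving the first coordinate through the invertibility of $\mathcal{L}_{R_1^0}$ (Lemma~\ref{lemme-linear}, using $R_1^0\in\dpos$ a.s.), whereas $\sqrt{\log T}(\hat\alpha_T-\alpha)\sim\tfrac{2}{\kappa}\,N_T/\sqrt{\log T}$.

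The heart of the argument is the joint convergence $(\tilde X_1,\tilde R_1,M_T/T,N_T/\sqrt{\log T})\Rightarrow(X_1^0,R_1^0,X_1^0-\alpha I_d,\sqrt{\kappa}\,G)$ with $G\sim\mathcal N(0,1)$ \emph{independent} of $(X_1^0,R_1^0)$. I would derive this from the exponential martingale underlying~\eqref{RM}, specialised to $b=0$: using the brackets~\eqref{brackets} one has, for $\Gamma\in\symm$, $\lambda\in\R$ (and $T$ large enough for validity, via Mayerhofer's Theorem~4.1 and the exponential moments of Proposition~\ref{prop:lap_Wish}), the identity $\E_\theta[\exp(\lambda N_T+\Tr[\Gamma M_T]-\tfrac{\lambda^2}{2}Q_T^{-1}-2\lambda T\Tr[\Gamma]-2\Tr[\Gamma^2R_T])]=1$. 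Replacing $\Gamma$ by $\Gamma/T$ and $\lambda$ by $\lambda/\sqrt{\log T}$ and using $R_T=T^2\tilde R_1$, the correction terms converge: $\tfrac{\lambda^2}{2\log T}Q_T^{-1}\to\tfrac{\kappa\lambda^2}{2}$, $\tfrac{2\lambda\Tr[\Gamma]}{\sqrt{\log T}}\to 0$, and $2\Tr[\Gamma^2\tilde R_1]\to 2\Tr[\Gamma^2R_1^0]$. Following the template of the proof of Theorem~\ref{alphad} (isolating the random corrections and controlling them by Cauchy--Schwarz plus the uniform integrability coming from Proposition~\ref{prop:lap_Wish} and from $Q_T^{-1}/\log T\to\kappa$), the identity passes to the limit and gives $\E[\exp(\Tr[\Gamma(X_1^0-\alpha I_d)]+\lambda\mathcal G-2\Tr[\Gamma^2R_1^0])]=e^{\kappa\lambda^2/2}$, where $\mathcal G$ is the limit in law of $N_T/\sqrt{\log T}$. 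The case $\lambda=0$ is just the exponential martingale identity for the Wishart started at $0$, so the weight $\exp(\Tr[\Gamma(X_1^0-\alpha I_d)]-2\Tr[\Gamma^2R_1^0])$ has mean one; tilting $\Px_\theta$ by it and reading off that the conditional Laplace transform of $\mathcal G$ equals $e^{\kappa\lambda^2/2}$ for \emph{every} $\Gamma$ forces $\mathcal G=\sqrt{\kappa}\,G$ to be centered Gaussian with variance $\kappa$ and independent of $(X_1^0,R_1^0)$.

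Finally I would assemble the pieces: by Slutsky's theorem and the continuity of $(X,Y)\mapsto\mathcal{L}^{-1}_X(Y)$ (Lemma~\ref{lemme-linear}), $\beta_T=T\hat b_T\to\mathcal{L}^{-1}_{R_1^0}(X_1^0-\alpha I_d)$ and $\sqrt{\log T}(\hat\alpha_T-\alpha)\to\tfrac{2}{\kappa}\sqrt{\kappa}\,G=2\sqrt{\tfrac{\alpha-(d+1)}{d}}\,G$, jointly and with $G$ independent of the first coordinate, which is exactly the claim. I expect the main obstacle to be precisely the independence in the third step: unlike the ergodic Theorems~\ref{thm_MLE_global_ergo}--\ref{alphad}, here the limit $R_1^0$ of $R_T/T^2$ is \emph{random}, so the limiting transform does not factor over a deterministic $\rbi$, and independence must instead be extracted from the factorisation produced by the exponential martingale. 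The delicate points are the uniform-integrability control of the random correction $\exp(-2\Tr[\Gamma^2\tilde R_1])$ and of the terminal term $Q_T^{-1}/\log T$, and the verification that the exponential weights $\exp(\Tr[\Gamma(X_1^0-\alpha I_d)]-2\Tr[\Gamma^2R_1^0])$, $\Gamma\in\symm$, form a determining family for the law of $(X_1^0,R_1^0)$; conceptually, the independence reflects that the normalised cross-bracket $\langle M_{i,j}/T,\,N_T/\sqrt{\log T}\rangle=2\delta_{ij}/\sqrt{\log T}\to 0$.
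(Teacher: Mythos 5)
Your scaffolding --- the scaling $\tilde X_u=X_{Tu}/T$, the exponential time change giving $Q_T^{-1}/\log T\to\kappa=\tfrac{d}{\alpha-(d+1)}$ a.s., the reduction via the error equations~\eqref{erreur_mle}, and the final Slutsky/continuity step --- is sound and parallels the paper's proof (the paper establishes the $Q_T$ rate in Lemma~\ref{lem_yor} by essentially your time change). The genuine gap is the one you flag yourself: the independence of $G$ from $(X_1^0,R_1^0)$. Your only tool there is the exponential-martingale identity, in which the coefficient of $R_T$ is \emph{forced} to equal $-2\Gamma^2$, where $\Gamma$ is the coefficient of $M_T$. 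After the limit passage you therefore only know $\E\bigl[\exp\bigl(\lambda\mathcal G+\Tr[\Gamma\mathcal M]-2\Tr[\Gamma^2\mathcal R]\bigr)\bigr]=e^{\kappa\lambda^2/2}$ along the curved parameter family $\{(\Gamma,-2\Gamma^2):\Gamma\in\symm\}$, not on an open set of pairs of coefficients. Setting $h_\lambda:=\E[e^{\lambda\mathcal G}\mid\mathcal M,\mathcal R]$, your identity reads $\E[(h_\lambda-e^{\kappa\lambda^2/2})W_\Gamma]=0$ with $W_\Gamma=\exp(\Tr[\Gamma\mathcal M]-2\Tr[\Gamma^2\mathcal R])$, and to conclude $h_\lambda=e^{\kappa\lambda^2/2}$ a.s.\ you need the family $\{W_\Gamma,\ \Gamma\in\symm\}$ to be complete (separating) for $\sigma(\mathcal M,\mathcal R)$. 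That is precisely a completeness statement for a \emph{curved} exponential family, and such families are in general not complete (compare the classical $N(\theta,\theta^2)$ example); nothing in your proposal establishes it for the law of $(X_1^0-\alpha I_d,R_1^0)$. A related, smaller hole: your martingale identity never identifies the law of $(\mathcal M,\mathcal R)$ itself --- the case $\lambda=0$ yields a single scalar constraint, not a Laplace transform --- so you implicitly rely on continuity of the Wishart law in the initial condition $x/T\to0$, which you do not justify.

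The paper closes both holes with one move that your plan almost contains but does not use: tilt only in the $N_T$ direction. By Theorem~4.1 of Mayerhofer~\cite{MayerhoferHammamet}, $\exp\bigl(\mu N_T/\sqrt{\log T}-\tfrac{\mu^2}{2Q_T\log T}\bigr)$ is a Girsanov density under which $X$ is again a Wishart process, with degree $\alpha+\mu/\sqrt{\log T}$; the joint Laplace transform of $(X_T/T,R_T/T^2)$ under the tilted measure is then given \emph{explicitly} by Proposition~\ref{prop:lap_Wish} with free parameters $\lambda_1,\lambda_2\in\dpos$, and it converges to $\det[V]^{-\alpha/2}$, which is the transform of $(X_1^0,R_1^0)$ (the factor $\exp(-\tfrac1{2T}\Tr[V'V^{-1}x])\to1$ also settles the initial-condition point). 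Removing the $\tfrac{\mu^2}{2Q_T\log T}$ correction by exactly your Cauchy--Schwarz/uniform-integrability scheme, one obtains the limit $\exp\bigl(\tfrac{\mu^2 d}{2(\alpha-(d+1))}\bigr)\det[V]^{-\alpha/2}$ for all $\mu\geb0$ and $\lambda_1,\lambda_2\in\dpos$: a transform that factorizes over a \emph{product} of parameter sets with nonempty interior, and such a factorization does characterize independence. So to repair your proof you must either prove completeness of the weights $W_\Gamma$ for this specific law, or (much simpler) decouple the coefficient of $R_T$ from that of $M_T$, which is precisely what the paper's change-of-measure-plus-explicit-Laplace-transform argument achieves.
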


\begin{proof}
From~\eqref{eq:estimator_couple} and~\eqref{erreur_mle}, we obtain
\[
\left\{\begin{array}{r c l}
\sqrt{\log(T)}(\hat\alpha_T-\alpha) &=&
-2 \frac{T \Tr[\hat b_T]}{\sqrt{\log(T)}} \log(T)Q_T  +2 \log(T)Q_T \frac{N_T}{\sqrt{\log(T)}}\\\\
T\hat b_T &=&\mathcal{L}^{-1}_{\frac{R_T}{T^2},Q_T}\left(\frac{X_T}{T}- \frac{x}{T}-(Q_TZ_T+1+d)I_d \right),
\end{array}
\right.
\]
and we are interested in studying the convergence in law of $\left(\frac{N_T}{\sqrt{\log(T)}},\frac{X_T}{T},\frac{R_T}{T^2} \right).$ By Theorem~4.1 in~\cite{MayerhoferHammamet}, for $\mu \geb 0$ and $T>1$, 
$$\frac{d \overline{\Px}}{d \Px_\theta}= \exp \left( \frac{\mu N_T}{\sqrt{\log(T)}}- \frac{\mu^2}{2 Q_T \log(T)} \right)$$
defines a change of probability and $(X_t)_{t\in [0,T]}$ is a Wishart process with degree $\alpha +\frac{\mu }{\sqrt{\log(T)}}$ under $\overline{\Px}$. Let $\lambda_1,\lambda_2 \in \dpos$ and
$$A_T=\E_\theta \left[ \exp \left( \frac{\mu N_T}{\sqrt{\log(T)}}- \frac{\mu^2}{2 Q_T \log(T)}  \right)\exp \left( - \Tr \left[\frac{\lambda_2}{T}X_T  \right] - \Tr \left[ \frac{\lambda_1}{T^2} R_T\right] \right) \right].$$
 By Proposition~\ref{prop:lap_Wish}, we have
\begin{align}
A_T
&=  
\overline{\E}\left[ \exp \left( - \Tr \left[\frac{\lambda_2}{T}X_T  \right] - \Tr \left[ \frac{\lambda_1}{T^2} R_T\right] \right)\right] \nonumber \\
&=
\frac{1}{ \det[V]^{ \frac{\alpha+\mu/\sqrt{\log(T)}}{2}}} \exp \Big(- \frac{1}{2T}\Tr \big[ V'V^{-1} x  \big] \Big)\underset{T \rightarrow +\infty}\rightarrow  \frac{1}{\det[V]^{ \frac{\alpha}{2}}}, \label{lim_A_T}
\end{align}
where 
\begin{equation}\label{defV_Vprime}
V=(\sqrt{2\lambda_1})^{-1}\sinh(\sqrt{2\lambda_1})2 \lambda_2 +  \cosh(\sqrt{2\lambda_1}), \ V'  =   2\cosh(\sqrt{2\lambda_1})\lambda_2 +  \sqrt{2\lambda_1}\sinh(\sqrt{2\lambda_1}).
\end{equation}
We note that this limit does not depend on $\mu$ and is the Laplace transform of $(X^0_1,R^0_1)$ by Proposition~\ref{prop:lap_Wish}.

We now use that $\frac{1}{ Q_T \log(T)}\underset{T \rightarrow +\infty}\rightarrow \frac{d}{\alpha -(d+1)} $ a.s., see Lemma~\ref{lem_yor} and we define
\begin{align*}
\tilde{A}_T&=\E_{\theta} \left[ \exp (\xi_T)\right] \text{ and } \xi_T= \frac{\mu N_T}{\sqrt{\log(T)}}- \frac{\mu^2 d}{2(\alpha -(d+1))} - \Tr \left[\frac{\lambda_2}{T}X_T  \right] - \Tr \left[ \frac{\lambda_1}{T^2} R_T \right]
\end{align*}
that is finite by using equation~\eqref{ui1} of Lemma~\ref{lem_yor} since $\xi_T\leb \frac{\mu N_T}{\sqrt{\log(T)}}$. We have
$$A_T=\tilde{A}_T+\E_{\theta} \left[ \left\{ \exp \left( \frac{\mu^2 d}{2(\alpha -(d+1))} - \frac{\mu^2}{2 Q_T \log(T)}\right)-1 \right\}\exp \left( \xi_T \right) \right].$$
 The Cauchy-Schwarz inequality 
gives $$(A_T-\tilde{A}_T)^2\leb \E_{\theta} \left[ \left\{ \exp \left( \frac{\mu^2 d}{2(\alpha -(d+1))} - \frac{\mu^2}{2 Q_T \log(T)}\right)-1 \right\}^2\right]\E_{\theta} \left[\exp \left( 2\xi_T \right) \right].$$ Since $Q_T \log(T)$ is positive for $T>1$ and converges a.s. to $\frac{\alpha -(d+1)}{d}$, the first expectation goes to~$0$ while the second one is bounded by using again~\eqref{ui1}. Therefore,
$A_T-\tilde{A}_T\underset{T \rightarrow +\infty}\rightarrow 0$, and we  get
$$\E_\theta \left[ \exp \left( \frac{\mu N_T}{\sqrt{\log(T)}} - \Tr \left[\frac{\lambda_2}{T}X_T  \right] - \Tr \left[ \frac{\lambda_1}{T^2} R_T\right] \right) \right] \underset{T \rightarrow +\infty}\rightarrow  \frac{\exp \left( \frac{\mu^2 d}{2(\alpha -(d+1))} \right) }{\det[V]^{ \frac{\alpha}{2}}}.$$
Thus, $\left( \frac{  N_T}{\sqrt{\log(T)}}, \frac{ X_T}{T}, \frac{R_T}{T^2} \right)$ converges in law to $(\sqrt{\frac{d}{\alpha -(d+1)}}G,X^0_1,R^0_1)$, where $G\sim \mathcal{N}(0,1)$ is independent of $X^0$. From~\eqref{eq_Z}, we have
\begin{align*}
Q_T Z_T +1+d=2\frac{1}{\sqrt{\log(T)}} \log(T) Q_T  \frac{N_T}{\sqrt{\log(T)}} +\alpha,
\end{align*}
and therefore $Q_T Z_T +1+d$ converges in probability to $\alpha$. Slutsky's theorem gives then the following convergence in law: as $T\rightarrow +\infty$,
\begin{equation}\label{cv_loi_1}\left( \frac{  N_T}{\sqrt{\log(T)}}, \frac{ X_T}{T}, \frac{R_T}{T^2} , Q_T Z_T +1+d , Q_T \log(T) \right) \Rightarrow \left(\sqrt{\frac{d}{\alpha -(d+1)}}G,X^0_1,R^0_1,\alpha, \frac{\alpha-(d+1)}{d} \right).
\end{equation}
This gives the claimed convergence for $(\hat{\alpha}_T,\hat{b}_T)$ due to the continuity property given in Lemma~\ref{lemme-linear}.
\end{proof}

\begin{mytheo}\label{b0_alphadplusun}
Assume that $b=0$ and  $\alpha=d+1$. Let $(\hat{b}_T, \hat{\alpha}_T)$ be the MLE defined by~(\ref{eq:estimator_couple}). Then, $( T(\hat b_T -b ),\log(T)(\hat \alpha_T-\alpha))$ converges in law under $\Px_\theta$  when $T\rightarrow +\infty$ to 
$$\left(\mathcal{L}^{-1}_{R_1^0}\left( X_1^0- \alpha I_d  
\right) ,\frac{4}{d\tau_1} \right),$$ where 
$X^0_t=\alpha t I_d + \int_0^t \sqrt{X^0_s} dW_s +dW_s\tp \sqrt{X^0_s}$ is a Wishart process with the same parameters but starting from~$0$, $R_t^0 =\int_0^t X_s^0ds$ and $\tau_1=\inf \{t\geb 0, B_t=1\}$ where $B$ is a standard Brownian motion independent from~$W$. 
\end{mytheo}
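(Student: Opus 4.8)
The plan is to follow the proof of Theorem~\ref{b0} for the supercritical case, replacing the $\sqrt{\log T}$ normalization of $\hat\alpha_T$ by $\log T$ and handling the degeneracy of the clock $Q_T^{-1}$ at the critical degree. With $b=0$, \eqref{eq_Z} gives $Z_T=2N_T$ (since $\alpha-1-d=0$ and $\Tr[b]=0$), so starting from \eqref{erreur_mle} and \eqref{eq:estimator_couple} (as rewritten in the proof of Theorem~\ref{b0}) I would write
\begin{align*}
\log(T)(\hat\alpha_T-\alpha)&=-2\log(T)Q_T\Tr[T\hat b_T]+\log(T)Q_TZ_T,\\
T\hat b_T&=\mathcal{L}^{-1}_{\frac{R_T}{T^2},Q_T}\Big(\tfrac{X_T}{T}-\tfrac{x}{T}-(Q_TZ_T+1+d)I_d\Big).
\end{align*}
Everything then reduces to the joint convergence in law of $\big(\frac{X_T}{T},\frac{R_T}{T^2},\frac{Q_T^{-1}}{(\log T)^2}\big)$, together with $Z_T/\log T\to d$ in probability (immediate from $X_T/T\Rightarrow X^0_1\in\dpos$ via $Z_T=d\log T+\log\det(X_T/T)-\log\det x$) and with $\log(T)Q_T\to0$, $Q_TZ_T\to0$, both following from $Q_T^{-1}\sim(\log T)^2$.

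The core of the argument is to identify the limiting law of the clock $\frac{Q_T^{-1}}{(\log T)^2}=\frac{1}{(\log T)^2}\int_0^T\Tr[X_s^{-1}]ds$, for which the almost sure limit of $\frac{1}{Q_T\log T}$ used when $\alpha>d+1$ degenerates. I would capture it through a change of measure that nudges the degree just above criticality: for $\mu\geq0$, Theorem~4.1 in Mayerhofer~\cite{MayerhoferHammamet} shows that $\exp\big(\frac{\mu N_T}{\log T}-\frac{\mu^2}{2(\log T)^2}Q_T^{-1}\big)$ is the density of a probability $\overline{\Px}$ under which $X$ is a Wishart process of degree $\alpha+\frac{2\mu}{\log T}$. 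Exactly as for $A_T$ in the proof of Theorem~\ref{b0}, Proposition~\ref{prop:lap_Wish} then gives, for $\lambda_1,\lambda_2\in\dpos$,
$$A_T:=\E_\theta\!\Big[\exp\big(\tfrac{\mu N_T}{\log T}-\tfrac{\mu^2}{2(\log T)^2}Q_T^{-1}\big)e^{-\Tr[\frac{\lambda_2}{T}X_T]-\Tr[\frac{\lambda_1}{T^2}R_T]}\Big]=\frac{\exp(-\frac{1}{2T}\Tr[V'V^{-1}x])}{\det[V]^{(\alpha+2\mu/\log T)/2}}\xrightarrow[T\to\infty]{}\frac{1}{\det[V]^{\alpha/2}},$$
with $V,V'$ as in \eqref{defV_Vprime}; this limit is the Laplace transform of $(X^0_1,R^0_1)$ and does not depend on $\mu$.

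Next I would detach the martingale factor. Since $\frac{\mu N_T}{\log T}=\frac{\mu Z_T}{2\log T}\to\frac{\mu d}{2}$ in probability and the complementary factor lies in $[0,1]$, a Cauchy--Schwarz estimate together with the uniform integrability of $\exp(\frac{\mu N_T}{\log T})=(\det X_T/\det x)^{\mu/(2\log T)}$ (established as in the proof of Theorem~\ref{b0}, using the moment bounds behind Lemma~\ref{lem_yor}) shows that $A_T$ and $e^{\mu d/2}\,\E_\theta[\exp(-\frac{\mu^2}{2}\frac{Q_T^{-1}}{(\log T)^2})e^{-\Tr[\frac{\lambda_2}{T}X_T]-\Tr[\frac{\lambda_1}{T^2}R_T]}]$ share the same limit. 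Hence, for all $\mu\geq0$ and $\lambda_1,\lambda_2\in\dpos$,
$$\E_\theta\!\Big[\exp\big(-\tfrac{\mu^2}{2}\tfrac{Q_T^{-1}}{(\log T)^2}\big)e^{-\Tr[\frac{\lambda_2}{T}X_T]-\Tr[\frac{\lambda_1}{T^2}R_T]}\Big]\xrightarrow[T\to\infty]{}e^{-\mu d/2}\,\frac{1}{\det[V]^{\alpha/2}}.$$
The product form shows that in the limit the clock becomes independent of $(X^0_1,R^0_1)$ and, writing $s=\mu^2/2$, that its Laplace transform is $e^{-\frac{d}{2}\sqrt{2s}}=\E[e^{-s\frac{d^2}{4}\tau_1}]$, so $\frac{Q_T^{-1}}{(\log T)^2}\Rightarrow\frac{d^2}{4}\tau_1$ with $\tau_1$ independent of $(X^0_1,R^0_1)$. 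Feeding $\big(\frac{X_T}{T},\frac{R_T}{T^2},\frac{Q_T^{-1}}{(\log T)^2},\frac{Z_T}{\log T}\big)\Rightarrow(X^0_1,R^0_1,\frac{d^2}{4}\tau_1,d)$ into the two identities above, Slutsky's theorem and the continuity of $(X,Y,a)\mapsto\mathcal{L}^{-1}_{X,a}(Y)$ from Lemma~\ref{lemme-linear} give $T\hat b_T\Rightarrow\mathcal{L}^{-1}_{R^0_1}(X^0_1-\alpha I_d)$, $\log(T)Q_T\Tr[T\hat b_T]\to0$, and $\log(T)Q_TZ_T=\frac{Z_T/\log T}{Q_T^{-1}/(\log T)^2}\Rightarrow\frac{d}{d^2\tau_1/4}=\frac{4}{d\tau_1}$, which is the claim.

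I expect the main obstacle to be this identification of the random clock limit: unlike in the case $\alpha>d+1$, one cannot replace $\frac{Q_T^{-1}}{(\log T)^2}$ by a constant, and its law must be read off from the factorized limit of the joint Laplace transform. The two technical keys are (i) the correctly normalized change of measure, which pushes the degree to $\alpha+\frac{2\mu}{\log T}$ so that the $\mu$-dependence enters only through $\det[V]^{-\mu/\log T}\to1$, and (ii) the uniform-integrability control needed to detach $\exp(\frac{\mu N_T}{\log T})$; it is through $\E[e^{-s\tau_1}]=e^{-\sqrt{2s}}$ that the Brownian hitting time $\tau_1$ appears.
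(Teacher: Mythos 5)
Your proposal is correct and follows essentially the same route as the paper's own proof: the change of measure $\exp\bigl(\frac{\mu N_T}{\log T}-\frac{\mu^2}{2(\log T)^2}Q_T^{-1}\bigr)$ combined with Proposition~\ref{prop:lap_Wish}, detaching the factor $\exp(\frac{\mu N_T}{\log T})$ via its convergence in probability to $e^{\mu d/2}$ and the uniform integrability~\eqref{ui2}, reading off the clock limit $\frac{Q_T^{-1}}{(\log T)^2}\Rightarrow\frac{d^2}{4}\tau_1$ (independent of $(X_1^0,R_1^0)$) from the factorized Laplace transform, and concluding by Slutsky and Lemma~\ref{lemme-linear}. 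The only cosmetic differences are that you obtain $Z_T/\log T\to d$ directly from $X_T/T\Rightarrow X_1^0$ instead of invoking Lemma~\ref{lem_yor}, and you state the shifted degree as $\alpha+\frac{2\mu}{\log T}$ (which is in fact the correct Girsanov shift; the paper's $\alpha+\frac{\mu}{\log T}$ is a harmless slip since both vanish in the limit).
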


\begin{proof}
The proof follows the same line as the one of Theorem~\ref{b0}, but we now write 
$$ \log(T)(\hat\alpha_T-\alpha) = -2 \frac{T \Tr[\hat b_T]}{\log(T)} \log(T)^2Q_T  +2 \log(T)^2Q_T \frac{N_T}{\log(T)},$$
while we still have $T\hat b_T =\mathcal{L}^{-1}_{\frac{R_T}{T^2},Q_T}\left(\frac{X_T}{T}- \frac{x}{T}-(Q_TZ_T+1+d)I_d \right)$.
By Theorem 4.1 in~\cite{MayerhoferHammamet}, for $\mu \geb 0$ and $T>1$, $\frac{d \overline{\Px}}{d \Px}= \exp \left( \frac{\mu N_T}{\log(T)}- \frac{\mu^2}{2 Q_T \log(T)^2} \right)$ defines a change of probability, and we define for $\lambda_1,\lambda_2\in \dpos$,
$$A_T=\E_{\theta} \left[ \exp \left( \frac{\mu N_T}{{\log(T)}}- \frac{\mu^2}{2 Q_T \log(T)^2}  \right)\exp \left( - \Tr \left[\frac{\lambda_2}{T}X_T  \right] - \Tr \left[ \frac{\lambda_1}{T^2} R_T\right] \right) \right].$$
By Proposition~\ref{prop:lap_Wish}, we have
\begin{align*}
A_T 
&= 
\frac{1}{ \det[V]^{ \frac{\alpha+\mu/{\log(T)}}{2}}} \exp \Big(- \frac{1}{2T}\Tr \big[ V' V^{-1} x  \big] \Big)\underset{T \rightarrow +\infty}\rightarrow  \frac{1}{\det[V]^{ \frac{\alpha}{2}}},
\end{align*}
where $V$ and $V'$ are defined by~\eqref{defV_Vprime}.

We now use that $\frac{N_T}{\log(T)}\rightarrow \frac{d}{2}$ in probability, see Lemma~\ref{lem_yor}, and define
\begin{align*}
\tilde{A}_T&=\E_{\theta} \left[ \exp (\xi_T)\right],\ \xi_T=   \frac{\mu d}{2 } - \frac{\mu^2}{2 Q_T \log(T)^2} - \Tr \left[\frac{\lambda_2}{T}X_T  \right] - \Tr \left[ \frac{\lambda_1}{T^2} R_T \right]
\end{align*}
and have
$$A_T=\tilde{A}_T+\E_{\theta} \left[ \left\{ \exp \left( \frac{\mu N_T}{ \log(T) } - \frac{\mu d}{2}\right)-1 \right\}\exp \left( \xi_T \right) \right].$$
We note that $\exp (\xi_T)\leb \exp \left(\frac{\mu d}{2 } \right)$. By using Lemma~\ref{lem_yor} and the uniform integrability~\eqref{ui2}, we get that $A_T-\tilde{A}_T \underset{T \rightarrow +\infty}{\rightarrow} 0$ and therefore 
$$ \E_{\theta} \left[ \exp \left(- \frac{\mu^2}{2 Q_T \log(T)^2} - \Tr \left[\frac{\lambda_2}{T}X_T  \right] - \Tr \left[ \frac{\lambda_1}{T^2} R_T \right]\right) \right]\underset{T \rightarrow +\infty}\rightarrow \frac{\exp(-\mu d/2)}{\det[V]^{ \frac{\alpha}{2}}}. $$
Therefore, $\left(  \frac{ X_T}{T}, \frac{R_T}{T^2}, Q_T\log(T)^2 \right)$ converges in law to $\left(X^0_1,R^0_1, \left(\frac{2}{d} \right)^2 \frac{1}{\tau_1} \right)$, where $\tau_1$ is independent of $X^0$.  We observe that
$Q_TZ_T=\frac{1}{\log(T)} Q_T\log(T)^2 \frac{Z_T}{\log(T)} $. 
Lemma~\ref{lem_yor} and Slutsky's theorem gives 
\begin{equation}\label{cv_loi_2}\left( \frac{  N_T}{ \log(T)}, \frac{ X_T}{T}, \frac{R_T}{T^2} , Q_T Z_T +1+d , Q_T \log(T)^2 \right) \Rightarrow \left( \frac{d}{2}, X^0_1,R^0_1,d+1, \left(\frac{2}{d} \right)^2 \frac{1}{\tau_1} \right), 
\end{equation}
which gives the claim by using the formulas for $T\hat b_T$ and $\log(T)(\hat\alpha_T-\alpha)$. 
\end{proof}

\subsection{The MLE of $b$}

Until the end of this section we consider that $\alpha \geb d-1$ is known and study the speed of convergence of the estimator of $b$ defined by~\eqref{MLE_bseul}. 

\subsubsection{Case $b =0 $.}

\begin{mytheo}
\label{th:estim_b=0_b}
Assume that $b=0$ and  $\alpha \geb d-1$. For $T>0$, let $\hat{b}_T$ be defined by (\ref{MLE_bseul}). When $ T\rightarrow +\infty$, $T( \hat b_T-b)$ converges in law under $\Px_\theta$ to 
$\mathcal{L}^{-1}_{R_1^0}\left( X_1^0-\alpha I_d  \right)$,
where $(X_t^0)_{t \geqslant 0}$ is the solution to $X^0_t=\alpha t I_d + \int_0^t \sqrt{X^0_s} dW_s +dW_s\tp \sqrt{X^0_s}$  and $R_t^0 =\int_0^t X_s^0ds$.
\end{mytheo}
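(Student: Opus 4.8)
The plan is to reduce everything to the joint behaviour of the pair $(X_T/T, R_T/T^2)$, whose limit has essentially already been identified in the proof of Theorem~\ref{b0}. Since $b=0$, equation~\eqref{eq_Z} gives $X_T = x + \alpha T I_d + M_T$, so the estimator~\eqref{MLE_bseul} simplifies to $\hat{b}_T = \mathcal{L}^{-1}_{R_T}(M_T)$. Using the homogeneity $\mathcal{L}_{cX} = c\,\mathcal{L}_X$, and hence $\mathcal{L}^{-1}_{cX} = \frac{1}{c}\mathcal{L}^{-1}_X$, I would rewrite
\[
T(\hat{b}_T - b) = T\hat{b}_T = \mathcal{L}^{-1}_{R_T/T^2}\!\left(\frac{M_T}{T}\right), \qquad \frac{M_T}{T} = \frac{X_T}{T} - \frac{x}{T} - \alpha I_d,
\]
which already displays the announced limit once one can pass to the limit jointly in $(X_T/T, R_T/T^2)$.

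First I would establish the joint convergence $(X_T/T, R_T/T^2) \Rightarrow (X_1^0, R_1^0)$ under $\Px_\theta$. This is exactly the content of the computation~\eqref{lim_A_T} specialised to $\mu=0$: for $\lambda_1,\lambda_2\in\dpos$, Proposition~\ref{prop:lap_Wish} yields
\[
\E_\theta\!\left[\exp\!\left(-\Tr\!\left[\tfrac{\lambda_2}{T}X_T\right] - \Tr\!\left[\tfrac{\lambda_1}{T^2}R_T\right]\right)\right] = \frac{1}{\det[V]^{\alpha/2}}\exp\!\left(-\tfrac{1}{2T}\Tr[V'V^{-1}x]\right) \underset{T\rightarrow+\infty}{\longrightarrow} \frac{1}{\det[V]^{\alpha/2}},
\]
with $V,V'$ given by~\eqref{defV_Vprime}, the limit being precisely the Laplace transform of $(X_1^0,R_1^0)$. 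The key simplification relative to Theorem~\ref{b0} is that no change of measure and no control of $N_T$ is needed here, since we only estimate $b$. In particular the argument requires only the Laplace transform of $(X_T,R_T)$ from Proposition~\ref{prop:lap_Wish}, which is available throughout the range $\alpha \geb d-1$ (where $R_T\in\dpos$ a.s.), and the starting-point term $\exp(-\tfrac{1}{2T}\Tr[V'V^{-1}x])$ tends to $1$, so that the dependence on $x$ is asymptotically negligible.

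To conclude, I would combine this weak convergence with $x/T \to 0$ to obtain $(R_T/T^2,\, M_T/T) \Rightarrow (R_1^0,\, X_1^0 - \alpha I_d)$, and then invoke the continuity of the map $(X,Y)\mapsto \mathcal{L}^{-1}_X(Y)$ from Lemma~\ref{lemme-linear} together with the continuous mapping (or Slutsky) theorem to reach $T(\hat b_T-b)\Rightarrow \mathcal{L}^{-1}_{R_1^0}(X_1^0-\alpha I_d)$. Here one uses that $R_T\in\dpos$ a.s.\ and that the limit $R_1^0\in\dpos$ a.s.\ (by the same reasoning as in the footnote), so that all the inverse operators are well defined. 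The only point deserving care is this invertibility and the genuinely \emph{joint} (rather than marginal) nature of the convergence, which is what allows the continuous map to be applied; but since the limit is read off directly from the rescaled identity above and the Laplace-transform limit is borrowed from the proof of Theorem~\ref{b0}, I do not expect a substantial obstacle.
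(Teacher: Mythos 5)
Your proposal is correct and follows essentially the same route as the paper's own proof: rewrite $T\hat b_T=\mathcal{L}^{-1}_{R_T/T^2}\left(X_T/T-x/T-\alpha I_d\right)$ via the homogeneity of $\mathcal{L}$, identify the limit of $(X_T/T,R_T/T^2)$ through the Laplace transform of Proposition~\ref{prop:lap_Wish} (the $\mu=0$ case of~\eqref{lim_A_T}), and conclude by the continuity of $(X,Y)\mapsto\mathcal{L}^{-1}_X(Y)$ from Lemma~\ref{lemme-linear}. The extra care you take about joint convergence and a.s.\ invertibility of $R_T$ and $R_1^0$ is implicit but unstated in the paper's two-line argument.
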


\begin{proof}
From~\eqref{MLE_bseul}, we have $T \hat{b}_T= \mathcal{L}^{-1}_{\frac{R_T}{T^2}}\left(\frac{X_T}{T} - \frac{x}{T}- \alpha I_d\right)$. Let $V$ and $V'$ be defined by~\eqref{defV_Vprime}. Similarly to~\eqref{lim_A_T}, we have by Proposition~\ref{prop:lap_Wish} for $\lambda_1,\lambda_2 \in \dpos$
$$\E_{\theta} \left[ \exp \left( - \Tr \left[\frac{\lambda_2}{T}X_T  \right] - \Tr \left[ \frac{\lambda_1}{T^2} R_T\right] \right) \right]= \frac{1}{ \det[V]^{ \frac{\alpha}{2}}} \exp \Big(- \frac{1}{2T}\Tr \big[ V'V^{-1} x  \big] \Big)\underset{T \rightarrow +\infty}\rightarrow  \frac{1}{\det[V]^{ \frac{\alpha}{2}}}.$$
This gives the convergence in law of $\left(\frac{ X_T}{T}, \frac{R_T}{T^2}\right)$ to $(X^0_1,R^0_1)$ and then the claimed result.
\end{proof}

\subsubsection{Case $b=b_0I_d, b_0>0$.}

In this case $b=b_0I_d$ with $b_0>0$. In order to identify the speed of convergence and the limit law, we use the Laplace transform approach. We have the following result,
\begin{mytheo}\label{theo_b0_pos}
Assume that $b=b_0 I_d$, $b_0 >0$, and  $\alpha \geb d-1$. For $T>0$ let $\hat{b}_T$ defined by (\ref{MLE_bseul}). When $T \rightarrow + \infty$, $\exp(b_0 T)( \hat b_T-b)$ converges in law under~$\Px_\theta$ to  $\mathcal{L}^{-1}_{X}\left(\sqrt{X}  \tilde{\mathbf{G}}+\tilde{\mathbf{G}} \sqrt{X} \right) $ where $X \sim WIS_d\left(\frac{x}{2b_0},\alpha,0,I_d;\frac{1}{4b_0^2} \right) $  and  $\tilde{\mathbf{G}}$ is an independent $d$-square matrix whose elements are independent standard Normal variables. 
\end{mytheo}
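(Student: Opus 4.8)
The plan is to reduce the statement to the joint asymptotics of the pair $(e^{-2b_0T}R_T,\,e^{-b_0T}M_T)$ and then to pass to the limit through $\mathcal{L}^{-1}$. Since $b=b_0I_d$ we have $\mathcal{L}_{R_T}(b)=2b_0R_T$, so \eqref{eq_Z} reads $X_T=x+\alpha TI_d+2b_0R_T+M_T$. Inserting this into the definition \eqref{MLE_bseul} of $\hat b_T$ gives $\hat b_T=b+\mathcal{L}_{R_T}^{-1}(M_T)$, hence $\hat b_T-b=\mathcal{L}_{R_T}^{-1}(M_T)$. Using the homogeneity $\mathcal{L}_{cX}^{-1}=c^{-1}\mathcal{L}_X^{-1}$ for $c>0$ and the linearity of $\mathcal{L}_X^{-1}$ in its argument, I would rewrite
\[
\exp(b_0T)(\hat b_T-b)=\mathcal{L}^{-1}_{e^{-2b_0T}R_T}\big(e^{-b_0T}M_T\big).
\]
It then suffices to find the joint limit law of $(e^{-2b_0T}R_T,\,e^{-b_0T}M_T)$ and to invoke the continuity of $(A,Z)\mapsto\mathcal{L}^{-1}_A(Z)$ from Lemma~\ref{lemme-linear}.

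First I would identify the limit of $e^{-2b_0T}R_T$. Setting $Y_t=e^{-2b_0t}X_t$, Itô's formula turns \eqref{SDE_Intro2} into $dY_t=\alpha e^{-2b_0t}I_d\,dt+e^{-b_0t}(\sqrt{Y_t}\,dW_t+dW_t\tp\sqrt{Y_t})$, whose martingale part has an integrable total quadratic variation; hence $Y_t$ converges a.s.\ and in $L^1$ to a limit $Y_\infty$, and the substitution $u=T-s$ gives $e^{-2b_0T}R_T=\int_0^Te^{-2b_0u}\big(e^{-2b_0(T-u)}X_{T-u}\big)\,du\to \tfrac{1}{2b_0}Y_\infty=:\Xi$ a.s. A time change with $d\tau=e^{-2b_0t}dt$ identifies $Y_\infty\overset{law}{=}WIS_d(x,\alpha,0,I_d;\tfrac{1}{2b_0})$, and the Brownian space-time scaling $X_{\lambda s}\overset{law}{=}\lambda\,WIS_d(\tfrac x\lambda,\alpha,0,I_d;s)$ with $\lambda=2b_0$ then yields $\Xi\overset{law}{=}WIS_d(\tfrac{x}{2b_0},\alpha,0,I_d;\tfrac{1}{4b_0^2})$, i.e. $\Xi\overset{law}{=}X$; alternatively this law is read off directly from the Laplace transform of Proposition~\ref{prop:lap_Wish}.

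For the martingale component I would mimic the proof of Theorem~\ref{alphad}. For $\Gamma\in\symm$ the process $\exp\big(e^{-b_0T}\Tr[\Gamma M_T]-2e^{-2b_0T}\Tr[\Gamma^2R_T]\big)$ is a mean-one exponential martingale (its exponential compensator is $2e^{-2b_0T}\Tr[\Gamma^2R_T]$ by \eqref{brackets}, since $\langle\Tr[\Gamma M]\rangle_T=4\Tr[\Gamma^2R_T]$); changing measure with it turns $X$ into a Wishart process with symmetric drift parameter $b_0I_d+2e^{-b_0T}\Gamma\to b_0I_d$, and Proposition~\ref{prop:lap_Wish} lets me evaluate, for $\mu\in\dpos$,
\[
\E_\theta\Big[\exp\big(\Tr[\Gamma e^{-b_0T}M_T]-\Tr[\mu e^{-2b_0T}R_T]\big)\Big]\xrightarrow[T\to\infty]{}\E\big[\exp\big(2\Tr[\Gamma^2\Xi]-\Tr[\mu\Xi]\big)\big].
\]
Since $\E[\exp(\Tr[\Gamma(\sqrt\Xi\,\tilde{\mathbf G}+\tilde{\mathbf G}\tp\sqrt\Xi)])\mid\Xi]=\exp(2\Tr[\Gamma^2\Xi])$, with $\tilde{\mathbf G}$ a matrix of independent $\mathcal N(0,1)$ entries independent of $\Xi$, the right-hand side is exactly the joint Laplace transform of $(\Xi,\sqrt\Xi\,\tilde{\mathbf G}+\tilde{\mathbf G}\tp\sqrt\Xi)$, and Lemma~\ref{lemme_matgauss} upgrades this into the joint convergence in law $(e^{-2b_0T}R_T,e^{-b_0T}M_T)\Rightarrow(\Xi,\sqrt\Xi\,\tilde{\mathbf G}+\tilde{\mathbf G}\tp\sqrt\Xi)$. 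Combining it with the continuity of $\mathcal{L}^{-1}$ (Lemma~\ref{lemme-linear}) and Slutsky's theorem yields the announced limit $\mathcal{L}_X^{-1}(\sqrt X\,\tilde{\mathbf G}+\tilde{\mathbf G}\tp\sqrt X)$.

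The main obstacle is the Laplace-transform limit in the third step: after the change of measure the drift of $X$ is the $T$-dependent perturbation $b_0I_d+2e^{-b_0T}\Gamma$, and although this is $O(e^{-b_0T})$ it is integrated against $R_T=O(e^{2b_0T})$, so it cannot simply be replaced by $b_0I_d$. The delicate part is therefore to use the exact matrix $\cosh/\sinh$ expression of Proposition~\ref{prop:lap_Wish}, to track the exponential scalings so that the perturbation does not contribute at leading order, and to establish the uniform integrability needed to pass to the limit (in the spirit of the $\tilde A_T$ argument of Theorem~\ref{alphad}); one also has to check that $\Xi\in\dpos$ almost surely so that $\mathcal{L}_\Xi^{-1}$ is well defined.
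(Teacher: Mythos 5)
Your first two steps are correct and consistent with the paper: the identity $\hat b_T-b=\mathcal{L}_{R_T}^{-1}(M_T)$ obtained from \eqref{MLE_bseul} and \eqref{eq_Z}, and the rescaling $e^{b_0T}(\hat b_T-b)=\mathcal{L}^{-1}_{e^{-2b_0T}R_T}\bigl(e^{-b_0T}M_T\bigr)$, are exactly the paper's starting point, and your almost sure identification of $\lim_T e^{-2b_0T}R_T=\frac{1}{2b_0}Y_\infty$ via $Y_t=e^{-2b_0t}X_t$ and a deterministic time change is a valid (and nice) complement, since the paper never proves this pathwise statement and instead reads the marginal law off the limiting Laplace transform. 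The genuine gap is your third step, the one you yourself label ``the main obstacle'': the convergence
\begin{equation*}
\E_\theta\Bigl[\exp\bigl(\Tr[\Gamma e^{-b_0T}M_T]-\Tr[\mu e^{-2b_0T}R_T]\bigr)\Bigr]\;\longrightarrow\;\E\bigl[\exp\bigl(-\Tr[(\mu-2\Gamma^2)\Xi]\bigr)\bigr]
\end{equation*}
is asserted but never established, and it is not a technical detail: it is the entire analytic content of the paper's proof. There, one applies Proposition~\ref{prop:lap_Wish} with $\epsilon_T=e^{-b_0T}$, $w_T=2\lambda_2\epsilon_T$, $v_T=2\lambda_1\epsilon_T^2-2(b\lambda_2+\lambda_2 b)\epsilon_T$ (finiteness guaranteed by condition~\eqref{dom_lap} with the choice $m=-\epsilon_T\lambda_2$, under $\lambda_1-2\lambda_2^2\in\dpos$), then expands $\sqrt{\tilde v_T}=b_0I_d-2\epsilon_T\lambda_2+\frac{\epsilon_T^2}{b_0}(\lambda_1-2\lambda_2^2)+O(\epsilon_T^3)$, observes that $\sqrt{\tilde v_T}+\tilde w_T=O(\epsilon_T^2)$, and computes the limits of $\epsilon_T^{-1}V_{v_T,w_T}(T)$ and $\epsilon_T^{-1}V'_{v_T,w_T}(T)$, arriving at the explicit limit~\eqref{lap_b0_pos}. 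Until this (or an equivalent) computation is carried out, what you have is a correct plan, not a proof.

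Moreover, the Girsanov detour is what manufactures the obstacle you then cannot remove. Since $M_T=X_T-x-\alpha TI_d-2b_0R_T$ is an affine function of $(X_T,R_T)$, the expectation above equals, up to the deterministic factor $\exp\bigl(-\epsilon_T\Tr[\Gamma(x+\alpha TI_d)]\bigr)$, a joint Laplace transform of $(X_T,R_T)$ under the original measure $\Px_\theta$, and Proposition~\ref{prop:lap_Wish} applies to it directly; the only subtlety is that the relevant $v_T$ is not positive semidefinite, which is precisely why the paper formulates the domain condition~\eqref{dom_lap} with a free matrix $m$ instead of requiring $v,w\in\posm$. On this direct route there is no $T$-dependent reference measure, no perturbed drift $b_0I_d+2e^{-b_0T}\Gamma$, and no uniform integrability argument at all: the proof reduces to a deterministic limit of an explicit matrix expression. (If you insist on the change of measure, whose mean-one property is indeed available via Corollary~\ref{cor_girsanov}, applying Proposition~\ref{prop:lap_Wish} under the new measure with drift $b_0I_d+2\epsilon_T\Gamma$, $w=0$ and $v=2\epsilon_T^2(\mu-2\Gamma^2)$ produces exactly the same matrices $\tilde v_T$, $\tilde w_T$ as the direct route, so nothing is gained and the matrix asymptotics still must be done; note also that you then need $\mu-2\Gamma^2\in\dpos$, not merely $\mu\in\dpos$.) Finally, your last caveat is well placed: $\Xi\in\dpos$ a.s.\ is needed both for Lemma~\ref{lemme_matgauss} and for the continuity of $(A,Z)\mapsto\mathcal{L}_A^{-1}(Z)$ from Lemma~\ref{lemme-linear}; for $\alpha>d-1$ this follows from $X_t\in\dpos$ a.s.\ (Proposition~4 of Bru, recalled in the paper's footnote), whereas the boundary case $\alpha=d-1$ is genuinely delicate, since already in dimension one the law $WIS_1(x_0,0,0,1;t)$ has an atom at~$0$.
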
 
The proof of this results relies on the explicit calculation of the Laplace transform of $(X_T,R_T)$ and is postponed to Subsection~\ref{subsec_lapl_MLE}. 

Obviously, the case $b=b_0 I_d$ is very particular. One would like to consider more general nonergodic cases or ideally to be able to state a general convergence results of $\hat{b}_T$ towards $b$ for any $b\in \symm$. Despite our efforts, we have not been able to get such a result. The reason why we can handle the ergodic case and the nonergodic case $b=b_0 I_d$ with $b_0\geb 0$ is that the convergence of all the matrix terms occurs at the same speed, namely $1/\sqrt{T}$ for the ergodic case, $1/T$ for $b=0$ and $e^{-b_0T}$ when $b_0>0$. In the other cases, there is no such a simple scalar rescaling. Heuristically, there may be different speeds of convergence that are difficult to disentangle because of the different matrix products. To get an idea of this, we present now the case of the estimation of $b$ when $b$ is known to be a diagonal matrix. In this case, we obtain different speed of convergence for each diagonal terms.

\subsubsection{The MLE of $b$ when $b$ is known a priori to be diagonal.}

We assume that $\alpha \geb d-1$ is known and that $b$ is a diagonal matrix, i.e. $b=\diag(b_1, \cdots , b_d)$. We want to estimate the diagonal elements by maximizing the likelihood. We denote $\theta_0=(0,\alpha)$. As in~\eqref{likeli_bseul}, we have 
\begin{equation*}
L_T^{\theta,\theta_0}=\exp\Bigl( \frac{\Tr[bX_T]-{\Tr[bx]}}{2}-\frac{1}{2}\int_0^T \Tr[b^2X_s]ds -\frac{\alpha T}{2}\Tr[b] \Bigr).
\end{equation*}
By differentiating this with respect to $b_i$, $1\leb i \leb d$, we get
\begin{align*}
\frac{\partial_{b_i} L_T^{\theta,\theta_0}}{L_T^{\theta,\theta_0}} =\frac{1}{2} \Big((X_T)_{i,i}-x_{i,i}- \alpha t I_d - 2b_i \int_0^T (X_s)_{i,i}ds  \Big),
\end{align*}
and therefore the MLE of $b$ is  given by
\begin{align}
(\hat{b}_T)_i = \frac{(X_T)_{i,i}-x_{i,i}- \alpha T}{2 (R_T)_{i,i}}.
\label{eq:estimator_diag_b}
\end{align}
We therefore obtain
\begin{align}
(\hat{b}_T)_i -b_i= \frac{(X_T)_{i,i}-x_{i,i}- \alpha T -2b_i (R_T)_{i,i} }{2 (R_T)_{i,i} }.
\label{eq:erreur_estim_b_diag}
\end{align}
Let us observe that this estimator is precisely the one obtained by Ben Alaya and Kebaier~\cite{Alaya_Kebaier_2013} for the CIR process. This is not very surprising since we know from~\eqref{SDE_Intro2}, \eqref{eq:crochet_general_de_X} and $b$ diagonal that there exists independent Brownian motions $\beta^i$, $1\leb i \leb d$ such that
$$d(X_t)_{i,i}=(\alpha+2b_i(X_t)_{i,i})dt+2 \sqrt{(X_t)_{i,i}}d\beta^i_t.$$
Thus, the diagonal elements follow independent CIR processes, and the observation of the non diagonal elements does not improve the ML estimation. We can obtain the asymptotic convergence by applying Theorem~1 in~\cite{BK2011}, up to a small correction in the nonergodic case which is given by our Theorem~\ref{theo_b0_pos} in dimension $d=1$. This yields to the following proposition. 
\begin{prop}Let $\alpha \geb d-1$ and $b$ a diagonal matrix. 
Let $\epsilon_t = \diag(\epsilon_t^1, \cdots, \epsilon_t^d)$ be a diagonal matrix with

\[
{\epsilon_t^i =}\left\{
\begin{array}{r c l}
t^{-\frac{1}{2}}  \quad & \text{if} & b_i<0 \\ 
t^{-1}  \quad &  \text{if} & b_i=0 \\
\exp(-b_i t)  \quad & \text{if} & b_i>0 
\end{array}
\right.
\]
Then, under $\Px_\theta$, $\epsilon^{-1}_T \diag(( \hat b_T)_1-b_1, \cdots,(\hat b_T)_d-b_d )$ converges in law to a diagonal matrix  $\mathbf D$ made with independent elements. Each diagonal element  $\mathbf D_i$ is distributed as follows:
\[
{\forall i \in \{1, \cdots,d \}, \quad   \mathbf D_i  \underset{law}{=} }\left\{
\begin{array}{r c l}
& \sqrt{\frac{-2 b_i}{\alpha}} \mathbf G    \quad & \mbox{if }  b_i<0 \\ 
&  \frac{X_1^0-\alpha }{2R_1^0}  \quad & \mbox{if }  b_i=0 \\
& \frac{\mathbf G  }{ \sqrt{X^{x_{i,i}/(2b_i)}_{1/(4b_i^2)}}},  \quad & \mbox{if } b_i>0 
\end{array}
\right.
\]
where $X^x_t=x+\alpha t + 2 \int_0^t \sqrt{X^x_s}dW_s$, $R_t^0 =\int_0^t X_s^0ds$, and $\mathbf G \sim \mathcal{N}(0, 1)$ is independent of~$X$.
\end{prop}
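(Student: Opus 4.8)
The plan is to exploit that a diagonal $b$ decouples the problem into $d$ independent one-dimensional CIR estimation problems, and then to quote the scalar theory regime by regime. First I would make the decoupling rigorous. With $a^\top a=I_d$, the bracket formula~\eqref{eq:crochet_general_de_X} gives $\langle X_{i,i}\rangle_T=4\int_0^T(X_s)_{i,i}\,ds$ and $\langle X_{i,i},X_{j,j}\rangle_T=0$ for $i\neq j$, while the drift of $(X_t)_{i,i}$ is $\alpha+2b_i(X_t)_{i,i}$ since $b$ is diagonal. Writing $d(X_t)_{i,i}=(\alpha+2b_i(X_t)_{i,i})\,dt+2\sqrt{(X_t)_{i,i}}\,d\beta^i_t$, the $\beta^i$ are continuous local martingales with $\langle\beta^i,\beta^j\rangle_t=\delta_{ij}t$, so by L\'evy's characterization $(\beta^1,\dots,\beta^d)$ is a $d$-dimensional Brownian motion with independent components. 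Each scalar SDE is closed and driven by $\beta^i$ alone, so the diagonal processes $((X_t)_{i,i})_{t\geq 0}$ are independent, and each is a $WIS_1(x_{i,i},\alpha,b_i,1)$ process.

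The second step is to recognize from~\eqref{eq:estimator_diag_b} that $(\hat b_T)_i$ is a functional of the single path $((X_s)_{i,i})_{s\in[0,T]}$, hence the scalars $(\epsilon_T^i)^{-1}((\hat b_T)_i-b_i)$ are mutually independent across $i$, and that in dimension one $(\hat b_T)_i$ is precisely the CIR MLE $((X_T)_{i,i}-x_{i,i}-\alpha T)/(2(R_T)_{i,i})$. It then suffices to prove the marginal convergence in each regime by specializing our earlier $d$-dimensional theorems to $d=1$. For $b_i<0$ I would invoke Theorem~\ref{thm_bseul_ergo} with $d=1$ (equivalently Theorem~1 in~\cite{BK2011}): there $\mathcal{L}_{\rbi}(Y)=2\rbi Y$ with $\rbi=-\alpha/(2b_i)$, so the Gaussian limit has variance $1/\rbi=-2b_i/\alpha$, matching $\sqrt{-2b_i/\alpha}\,\mathbf G$. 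For $b_i=0$ the claim is exactly Theorem~\ref{th:estim_b=0_b} in dimension one, where $\mathcal{L}^{-1}_{R_1^0}(X_1^0-\alpha)=(X_1^0-\alpha)/(2R_1^0)$. For $b_i>0$ it is Theorem~\ref{theo_b0_pos} in dimension one: since $\mathcal{L}_X(Y)=2XY$, the limit $\mathcal{L}^{-1}_X(\sqrt X\tilde{\mathbf G}+\tilde{\mathbf G}\sqrt X)$ reduces to $\tilde{\mathbf G}/\sqrt X$ with $X\sim WIS_1(x_{i,i}/(2b_i),\alpha,0,1;1/(4b_i^2))$, which is the law of $X^{x_{i,i}/(2b_i)}_{1/(4b_i^2)}$.

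Finally I would assemble the joint statement from the marginals: for fixed $T$ the entries are independent, so their characteristic functions factorize and the marginal convergences yield convergence in law of $\epsilon_T^{-1}\diag((\hat b_T)_1-b_1,\dots,(\hat b_T)_d-b_d)$ to the diagonal matrix $\mathbf D$ with independent entries of the stated laws. I do not expect a serious obstacle here, because the decoupling turns the proposition into a corollary of the scalar results; the only point requiring genuine care is the supercritical regime $b_i>0$, where a direct appeal to~\cite{BK2011} would give the wrong limiting distribution and one must substitute the corrected scalar version furnished by Theorem~\ref{theo_b0_pos}, together with the bookkeeping needed to match the one-dimensional limits with the matrix objects $\mathbf G$, $\tilde{\mathbf G}$, $X^0$, $R^0$ and $X^{x}$ of the statement.
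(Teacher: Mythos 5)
Your proposal is correct and follows essentially the same route as the paper: the paper likewise observes that for diagonal $b$ the diagonal entries $(X_t)_{i,i}$ are independent CIR processes driven by independent Brownian motions, that the estimator \eqref{eq:estimator_diag_b} is exactly the scalar CIR MLE, and then concludes regime by regime from Theorem~1 of \cite{BK2011}, with the nonergodic case $b_i>0$ corrected by Theorem~\ref{theo_b0_pos} specialized to dimension one. Your extra details (L\'evy's characterization for the independence of the $\beta^i$, strong uniqueness to get independence of the diagonal processes, and factorization of the joint limit from independent marginals) simply make explicit what the paper leaves implicit.
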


\section{Optimality of the  MLE}\label{Sec_LAN}
In parametric estimation theory, a fundamental role is played by the local asymptotic normality
 ({\bf LAN})  property  since the work of Le Cam \cite{Lecam}. This general concept developed 
by Le Cam  is extended later by 
Le Cam and Yang   \cite{LecYan} and  Jeganathan  \cite{Jeg}  to local asymptotic mixed normality ({\bf LAMN}) and  local asymptotic quadraticity ({\bf LAQ}) properties. These notions are mainly dedicated to study the asymptotic  efficiency of estimators of a given parametric model. 
The aim of this section is to  check the validity of either {\bf LAN},  {\bf LAMN} or  {\bf LAQ} properties for the global model in order to get the  asymptotic efficiency of our maximum likelihood estimators studied in the previous section. Here we prove these properties only for the global model $\theta=(b,\alpha)$ when $b$ is known to be symmetric. The same technique applies for all the other cases considered in this paper where we have been able to obtain the corresponding local asymptotic property. 

Let us consider the Wishart process $(X_t)_{t \geqslant 0} \in  \mathcal{S}_d^+$ with parameters  $\theta:=(b,\alpha)$, with  $\alpha\geq d+1$ and $b \in
\mathcal{S}_d$. 
\begin{eqnarray}\label{Wish}
\left\{
    \begin{array}{ll}
        dX_t=\left[\alpha I+bX_t+X_tb \right]dt+ \sqrt{X}_tdW_t+dW_t \tp \sqrt{X}_t , t>0 \\
        X_0 \in \mathcal{S}_d^+.
    \end{array}
\right.
\end{eqnarray}
We recall that $\mathbb P_{\theta}$ denotes the distributions induced by the solutions of \eqref{Wish} on 
canonical space $C(\mathbb R_+, \posm )$  with the natural filtration $\mathcal F^X_t:=\sigma(X_s, s\leq t)$ and  
$\mathbb P_{\theta,t}={\mathbb P_{\theta}}_{|\mathcal F^X_t} $ denotes the restriction of $\mathbb P_{\theta}$ on the filtration $\mathcal F_t$.

For $\tilde \alpha\geq d+1$ and $\tilde b\in\mathcal{S}_d^+$, we set
$\tilde \theta=(\tilde b, \tilde\alpha)$,
$$H_t=\frac{\tilde \alpha-\alpha}{2}(\sqrt{X_t})^{-1} +(\tilde b-b) \sqrt{X_t},$$ and  we introduce the log-likelihood  function  
\begin{eqnarray}\label{log-likelihood}
 \ell^{\theta}_T(\tilde \theta)&=&\log\left(\frac{d\mathbb P_{\tilde \theta,T}}{d\mathbb P_{\theta,T}}\right)
 ={\int_0^T \Tr[H_sd
    W_s]-\frac{1}{2}\int_0^T \Tr[H_sH_s^\top]ds}
\end{eqnarray}

The process $(\tilde{W}_t=W_t-\int_0^t H^\top_sds, t\leq T)$ is a $d \times d$-Brownian motion
under ${\mathbb{P}}_{\tilde\theta,T}$.
In the sequel, let us introduce the quantity $\delta_T:=(\delta_{1,T},\delta_{2,T})\in \R^2$ where  
for $i\in\{1,2\}$  the  localizing rates satisfy  $|\delta_{i,T}|\rightarrow 0$ when $T\rightarrow\infty$.  For all  $u:=(u_1,u_2)\in\R\times \mathcal{S}_d $, we define $\delta_{T}\cdot u
:=(\delta_{1,T}u_{1},\delta_{2,T} u_{2})\in \R\times \mathcal{S}_d$.
Now, we rewrite  \eqref{log-likelihood}  with $\tilde\theta=\theta+\delta_{T}\cdot u$ 
\begin{eqnarray*}
\ell^{\theta}_T(\theta+\delta_{T}\cdot u)&=&\int_0^T\Tr\left[\frac{\delta_{1,T}u_1(\sqrt{X_s})^{-1}}{2}d W_s
+\delta_{2,T}u_2\sqrt{X_s} d W_s\right]\\
&&- \frac{1}{2}\int_0^T \Tr\left[ \delta_{2,T}u_2X_s \delta_{2,T} u_2 \right]ds
- \frac{T}{2}\delta_{1,T}u_1 \Tr\left[\delta_{2,T}u_2 \right]\\
&&- \frac{1}{8}\int_0^T (\delta_{1,T}u_1)^2\Tr\left[(X_s)^{-1}\right]ds.
\end{eqnarray*}
Hence, by using the definitions~\eqref{eq:def_quantities}, \eqref{eq:def_Mt} and~\eqref{eq:def_Nt} of the martingales processes $(N_t)_{t\geq 0}$ and $(M_t)_{t\geq 0}$ and the processes $(R_t)_{t\geq 0}$ and $(Q_t)_{t\geq 0}$, it is easy to check that
\begin{eqnarray}
 \ell_T^{\theta}(\theta +\delta_{T}\cdot u)&=& \frac{1}{2}\Bigl(\delta_{1,T}u_1 N_T+\Tr\bigl[\delta_{2,T
}u_2M_T\bigr]\Bigr)  
 -\frac{1}{2} \Tr\left[ \delta_{2,T}u_2R_T \delta_{2,T} u_2 \right]  \nonumber\\
&&- \frac{T}{2}\delta_{1,T}u_1 \Tr\left[\delta_{2,T} u_2 \right]-
 \frac{1}{8}(\delta_{1,T}u_1)^2(Q_T)^{-1}  \nonumber\\
&=&\Lambda_T(u) - \frac{1}{2} \Gamma_T(u), \label{dev}
\end{eqnarray}
where 
$
 \Lambda_T(u)=\frac{1}{2}\Bigl(\delta_{1,T}u_1 N_T+\Tr\bigl[\delta_{2,T} u_2M_T\bigr]\Bigr)  
$
is a linear random function  with respect to $u\in\R\times\posm$  with quadratic variation 
\begin{equation*}
 \Gamma_T(u)= \delta_{2,T}^2 \Tr[u_2^2 R_T] 
+ T \delta_{1,T} \delta_{2,T}u_1 \Tr\left[u_2 \right]+ \frac{1}{4}  \delta_{1,T}^2u_1^2 Q_T^{-1}.
\end{equation*}

\subsection{Case $-b\in S^+_d$ and $\alpha\geb d+1$}
We first consider $\alpha>d+1$.
In this ergodic case, we set $\delta_{i,T}=T^{-1/2}$ for $i\in\{1,2\}$, and we get from~\eqref{Birk1} and~\eqref{Birk2}
\begin{equation}\label{raykov}
 \Gamma_T(u)\overset{a.s.}{\longrightarrow} \overline\Gamma_{\infty}(u):= \Tr\left[ u^2_2\overline R_{\infty} \right] 
 + u_1 \Tr\left[u_2 \right]+
 \frac{1}{4}u_1^2\Tr\left[(\overline Q_{\infty})^{-1}\right],\quad\mbox{ as
}T\rightarrow+\infty.
\end{equation}
This yields the validity of the so called Raykov type condition. 
Hence, according to Theorem~1 in \cite{Luc92},  relations \eqref{dev} and \eqref{raykov}  ensure the validity of the local asymptotic normality ({\bf LAN}) property, that is  under $\mathbb P_{\theta}$ we have 
\begin{equation}\label{lan}
\left(\Lambda_T(u) , \Gamma_T(u) \right)\Rightarrow \left(\overline\Gamma_{\infty}^{1/2}(u)Z,\overline\Gamma_{\infty}(u)\right),\;\; \mbox{as } T\rightarrow\infty,
\end{equation}
with $Z$ a standard normal real random variable. 
It is worth noting that the above convergence  can also be obtained using the proof of Theorem \ref{thm_MLE_global_ergo}.  In fact,  we have already proven that under $\mathbb P_{\theta}$  
\begin{equation}\label{limlaw}
\left(\frac{N_T}{\sqrt{T}},\frac{M_T}{\sqrt{T}}\right)\Rightarrow (\tilde{\mathbf G}, \tilde{H})
\end{equation}
where $(\tilde G, \tilde H)$ is a centered Gaussian vector taking values in $\R\times S^+_d$ such that
 \begin{align*}
 \E_\theta(\tilde{\mathbf{G}}_{i,j}\tilde{\mathbf{G}}_{k,l})&=\left[\delta_{jl}(\rbi)_{i,k}+\delta_{jk}(\rbi)_{i,l}+
 \delta_{il}(\rbi)_{j,k}+\delta_{ik}(\rbi)_{j,l}\right], \label{loi_Gtilde}\\
 \E_\theta(\tilde{\mathbf{G}}_{i,j}\tilde{H})&=2 \delta_{i,j}  \text{ and } \E_\theta(\tilde{H}^2)= 
 \qbi^{-1}. \nonumber
 \end{align*}
Therefore, {\bf LAN} property \eqref{lan} follows from relations  \eqref{raykov} and \eqref{limlaw}. 

We now consider the case  $\alpha=d+1$ and set  $\delta_{1,T}=T^{-1}$ and   $\delta_{2,T}=T^{-1/2}$. By using~\eqref{cv_loi_3}, we get that 
under $\mathbb P_{\theta}$,
\begin{equation*}
\left(\Lambda_T(u) , \Gamma_T(u) \right)\Rightarrow 
\left(-\frac{u_1}{2}\Tr[b]+\Tr\left[u_2 \sqrt{\rbi} \tilde{\mathbf G}\right] ,
\Tr[u_2^2\overline R_{\infty}]+\frac{1}{4}u_1^2\tau_{_{-\Tr[b]}}\right),\;\; \mbox{as } 
T\rightarrow\infty,
\end{equation*}
where $\tau_{_{-\Tr[b]}}$ is defined as in Theorem~\ref{alphad} and $ \tilde{G}$ is an independent matrix, whose elements $\tilde{G}_{i,j}$, $1\leb i,j\leb d$, are independent standard normal variables.
Hence, according to  Le Cam and Yang  \cite{LecYan} and  Jeganathan  \cite{Jeg} this last convergence yields the {\bf LAQ} property for this ergodic case.
\subsection{Case $b=0$ and $\alpha \geb d+1$}
We first assume $\alpha>d+1$. From~\eqref{Wish} with $b=0$ and~\eqref{eq:def_Mt}, we have $M_T=X_T-x-\alpha I_dT$. From~\eqref{cv_loi_1}, it follows that as $T\rightarrow\infty$ 
$$\left( \frac{  N_T}{\sqrt{\log(T)}}, \frac{ M_T}{T}, \frac{R_T}{T^2}, \frac{ Q^{-1}_T}{\log(T)} \right) \Rightarrow\left(\sqrt{\frac{d}{\alpha -(d+1)}}G,X^0_1-\alpha I_d,R^0_1,\frac{d}{\alpha-(d+1)} \right),$$
where $X^0_1$ and $R^0_1$ are defined as in Theorem~\ref{b0}.
Thus, in the same way as in the previous case if we set
 $\delta_{1,T}=\frac{1}{\sqrt{\log(T)}}$ and   $\delta_{2,T}=T^{-1}$, then $\left(\Lambda_T(u) , \Gamma_T(u) \right)$ converges in law under $\Px_\theta$ to
\begin{multline*}
\left(\frac{1}{2}\sqrt{\frac{d}{\alpha -(d+1)}}u_1G +\frac{1}{2}\Tr\left[u_2 (X_1^0-\alpha I_d)\right] ,
\Tr\left[u_2^2\overline R_1^0\right]+ \frac{u_1^2d}{4(\alpha-(d+1))}\right),\;\; \mbox{as } 
T\rightarrow\infty.
\end{multline*}
This ensures the validity of the  {\bf LAQ}  property in this non-ergodic case.

When $\alpha=d+1$, we use the notation of Theorem~\ref{b0_alphadplusun} and  get from~\eqref{cv_loi_2}
$$\left( \frac{  N_T}{\log(T)}, \frac{ M_T}{T}, \frac{R_T}{T^2}, \frac{ Q^{-1}_T}{\log(T)^2} \right) \Rightarrow\left( \frac{d}{2},X^0_1-\alpha I_d,R^0_1, \left(\frac{d}{2}\right)^2 \tau_1 \right).$$
With $\delta_{1,T}=\frac{1}{\log(T)}$ and   $\delta_{2,T}=T^{-1}$, we get that $\left(\Lambda_T(u) , \Gamma_T(u) \right)$ converges in law under $\Px_\theta$ to
\begin{align*}
\left(\frac{d}{4} u_1+\frac{1}{2}\Tr[u_2(X^0_1-\alpha I_d)] ,
\Tr\left[u_2^2\overline R_1^0\right]+ \frac{d^2 u_1^2}{8}   \tau_1\right),\;\; \mbox{as } 
T\rightarrow\infty.
\end{align*}
This gives again the {\bf LAQ}  property.
\section{The Laplace transform and its use to study the MLE}\label{Sec_laplace}

\subsection{The Laplace transform of $(X_T, R_T)$}

We present our main result on the joint Laplace transform of $(X_T,R_T)$, that can be of independent interest. This Laplace transform is given by Bru~\cite{Bru}, eq.~(4.7) when $b=0$ and has been recently studied and obtained explicitly by Gnoatto and Grasselli~\cite{GnoattoGrasselli}. Here, we present another proof that enables us to get the Laplace transform for any $\alpha\geb d-1$, as well as a more precise result concerning its set of convergence, see Remarks~\ref{rk_setcv} and~\ref{rk_ext_Aff} below for a further discussion.  

\begin{prop}\label{prop:lap_Wish} Let $\alpha\geb d-1$, $x\in \posm$, $b\in \symm$ and $X\sim WIS_d(x,\alpha,b,I_d)$.
Let $v,w \in \mathcal{S}_d$ be such that 
\begin{equation}\label{dom_lap}\exists m \in \symm, \ \frac{v}{2}-mb-bm-2m^2 \in \posm \text{ and } \frac{w}{2}+m \in \posm. 
\end{equation}
Then, we have for $t\geb0$
\begin{align}
\mathbb{E}& \Big[ \exp \Big( -\frac{1}{2}\Tr [wX_t]- \frac{1}{2} \Tr [v R_t\big]\Big) \Big]  \nonumber \\
	&= \frac{\exp \big(- \frac{\alpha}{2} \Tr[b] t\big)}{\det[V_{v,w}(t)]^{ \frac{\alpha}{2}}} \exp \Big(- \frac{1}{2}\Tr \big[ (V'_{v,w}(t)V_{v,w}(t)^{-1}+b  )x  \big] \Big),\label{laplace_Wis_jointe}
\end{align}
with
\begin{align*}
    	V_{v,w}(t)= \left( \sum_{k=0}^\infty t^{2k+1} \frac{\tilde{v}^{k}}{(2k+1)!} \right)\tilde{w}+\sum_{k=0}^\infty t^{2k} \frac{\tilde{v}^k}{(2k)!}, \quad	\tilde{v}=v+b^2, \quad \mbox{and} \quad		\tilde{w}=w-b.
\end{align*}
If besides $\tilde{v}=v+b^2\in \dpos$, we have $V_{v,w}(t)= (\sqrt{\tilde{v}})^{-1} \sinh(\sqrt{\tilde{v}} t)\tilde{w}+\cosh(\sqrt{\tilde{v}} t)$ and then $V'_{v,w}(t)=  \cosh(\sqrt{\tilde{v}} t)\tilde{w}+\sinh(\sqrt{\tilde{v}} t)\sqrt{\tilde{v}}$.
\end{prop}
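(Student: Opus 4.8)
The plan is to exploit the affine structure of the Wishart process: I would search for the Laplace transform in exponential-affine form, derive a matrix Riccati equation for the ``$X_t$-coefficient'', solve it by linearisation, and only at the end pin down the domain of validity~\eqref{dom_lap} by a domination argument built from the auxiliary matrix $m$. Fix $t>0$; for $s\in[0,t]$ I would posit, via the Markov property,
$$\E\Big[\exp\Big(-\tfrac12\Tr[wX_t]-\tfrac12\Tr\big[v\textstyle\int_s^tX_u\,du\big]\Big)\,\Big|\,\mathcal F^X_s\Big]=\exp\Big(-\phi(t-s)-\tfrac12\Tr[\psi(t-s)X_s]\Big)$$
for some $\phi:[0,t]\to\R$ and $\psi:[0,t]\to\symm$ with $\phi(0)=0$, $\psi(0)=w$. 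Equivalently, $m_s:=\exp(-\tfrac12\Tr[vR_s])\exp(-\phi(t-s)-\tfrac12\Tr[\psi(t-s)X_s])$ should be a martingale. Applying It\^o's formula to this exponential, with the dynamics~\eqref{SDE_Intro2} and the covariation structure~\eqref{eq:crochet_general_de_X}--\eqref{brackets} (for $a=I_d$), and cancelling the finite-variation part, I would obtain the scalar equation $\phi'=\tfrac{\alpha}{2}\Tr[\psi]$ together with the matrix Riccati equation $\psi'=v+b\psi+\psi b-\psi^2$, $\psi(0)=w$. With both equations satisfied, $m_s$ is a local martingale by construction.

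\textbf{Solving the system.} Completing the square with $\eta:=\psi-b$ turns the Riccati equation into $\eta'=\tilde v-\eta^2$, $\eta(0)=\tilde w$, where $\tilde v=v+b^2$ and $\tilde w=w-b$. The substitution $\eta=V'V^{-1}$ linearises it: a direct computation gives $(V'V^{-1})'=\tilde v-(V'V^{-1})^2$ as soon as $V''=\tilde v V$, so I would let $V$ solve $V''=\tilde v V$ with $V(0)=I_d$, $V'(0)=\tilde w$. Its power series is exactly $V_{v,w}(t)$, collapsing to the $\sinh/\cosh$ form when $\tilde v\in\dpos$; one also checks $V'_{v,w}=\tfrac{d}{dt}V_{v,w}$ and $\psi=b+V'V^{-1}$. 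Since $\tfrac{d}{dt}\log\det V=\Tr[V'V^{-1}]$, integrating $\phi'=\tfrac{\alpha}{2}(\Tr[b]+\Tr[V'V^{-1}])$ from $\phi(0)=0$ yields $\phi(t)=\tfrac{\alpha}{2}\Tr[b]\,t+\tfrac{\alpha}{2}\log\det V(t)$, reproducing both the prefactor and the exponent $-\tfrac12\Tr[(V'V^{-1}+b)x]$ of~\eqref{laplace_Wis_jointe}.

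\textbf{The domain of convergence (the hard part).} The above is a formal identity until one shows $m_s$ is a \emph{true} martingale, equivalently that the Laplace transform is finite on~\eqref{dom_lap}; this is where $m$ enters. Applying It\^o's formula to $\exp(\Tr[mX_s])$ shows that
$$\tilde D_s:=\exp\Big(\Tr[m(X_s-x)]-\alpha\Tr[m]s-\int_0^s\Tr[(mb+bm+2m^2)X_u]\,du\Big)$$
is the Dol\'eans--Dade exponential of $2\int_0^\cdot\Tr[m\sqrt{X_u}\,dW_u]$, hence a nonnegative local martingale with $\E[\tilde D_s]\leb1$. A rearrangement then gives the exact pathwise identity
$$\exp\Big(-\tfrac12\Tr[wX_t]-\tfrac12\Tr[vR_t]\Big)=\exp\big(\Tr[mx]+\alpha\Tr[m]t\big)\,\tilde D_t\,\exp\Big(-\Tr\big[(\tfrac w2+m)X_t\big]\Big)\,\exp\Big(-\int_0^t\Tr\big[(\tfrac v2-mb-bm-2m^2)X_s\big]\,ds\Big),$$
in which the last two exponentials are bounded by $1$ \emph{precisely} under~\eqref{dom_lap}, since $X_s\in\posm$. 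This already yields finiteness, $\E[\exp(-\tfrac12\Tr[wX_t]-\tfrac12\Tr[vR_t])]\leb\exp(\Tr[mx]+\alpha\Tr[m]t)$.

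To get the exact value, I would read $\tilde D_t$ as a change of measure (Theorem~4.1 in~\cite{MayerhoferHammamet}): under $d\Px^{m}=\tilde D_t\,d\Px$ the process $X$ is a Wishart process with drift $b+2m$, while $\tilde v=v+b^2$ and $\tilde w=w-b$ are left invariant. The identity above then reduces the general case to the elementary one in which $\tfrac w2+m$ and $\tfrac v2-mb-bm-2m^2$ are positive semidefinite, where $m_s\leb1$ is bounded, hence a genuine martingale, so $\E[m_t]=m_0$ gives the claimed formula. I expect the delicate points to be exactly this true-martingale / uniform-integrability step (controlling the stochastic integral to show $\tilde D$ is a true martingale, not merely a supermartingale) and verifying that $V(t)$ remains invertible on the relevant range, so that $V'V^{-1}$ and $\det V$ are well defined throughout.
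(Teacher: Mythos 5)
Your proposal is correct in substance and, for the crucial step, takes a genuinely different route from the paper. The affine ansatz and the Riccati system are the same (your $\psi,\phi$ are the paper's $-2\gamma,-\beta$), and your linearisation via $V''=\tilde v V$, $V(0)=I_d$, $V'(0)=\tilde w$ is equivalent to the paper's representation of the Riccati solution through the $2d\times 2d$ block matrix exponential following Levin~\cite{Levin}; both yield the stated power series. The real difference is how one passes from local martingale to true martingale. The paper stays under the original measure and uses Rydberg's truncation argument~\cite{Rydberg}: truncate the integrand $\gamma(T-t)\sqrt{X_t}$, apply Girsanov to the truncated exponential, identify the stopped law under the new measure with that of an auxiliary time-inhomogeneous Wishart process with drift $b+2\gamma(T-t)$, and conclude $\E[\mathcal{E}_T]=1$ directly for all $v,w$ satisfying~\eqref{dom_lap}. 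You instead factor the quantity of interest exactly through the Dol\'eans--Dade exponential $\tilde D$ of $2\int\Tr[m\sqrt{X_u}\,dW_u]$. This buys two things the paper's proof does not give at that stage: an immediate finiteness bound $\E[\exp(-\frac12\Tr[wX_t]-\frac12\Tr[vR_t])]\leb \exp(\Tr[mx]+\alpha\Tr[m]t)$ from the supermartingale property alone, and a clean reduction of the general case to $v,w\in\posm$ with drift $b+2m$, which works because $\tilde v$ and $\tilde w$ are invariant under $(v,w,b)\mapsto(v-2(mb+bm)-4m^2,\,w+2m,\,b+2m)$ and the prefactors recombine into $\exp(-\frac{\alpha}{2}\Tr[b]t)$ and $\exp(-\frac12\Tr[(V'V^{-1}+b)x])$. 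The price is that the true-martingale property of $\tilde D$ is outsourced to Theorem~4.1 of~\cite{MayerhoferHammamet} (the paper proves the analogous fact by hand via Rydberg); this is legitimate and not circular, since that theorem is an external result which the paper itself invokes for identical drift-type changes of measure.

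One ingredient still has to be supplied to close your reduced case: the claim that $m_s\leb 1$ there requires $\psi(u)\in\posm$, hence $\phi(u)\geb 0$, for all $u\in[0,t]$ — that is, the matrix Riccati flow started at $\psi(0)=w\in\posm$ with source $v\in\posm$ must stay in $\posm$ and exist globally. This is exactly Lemma~\ref{lem_riccati} (Dieci and Eirola~\cite{DieciEirola}), which the paper applies, shifted by $m$, to get global existence of $\gamma$. You flag invertibility of $V(t)$ as a delicate point, but positive semidefiniteness of $\psi$ is the slightly stronger fact on which your boundedness argument rests (it then also yields invertibility of $V$, since $\det V(t)=\exp(\int_0^t\Tr[\psi(s)-b]\,ds)$). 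With that lemma cited, your argument is complete.
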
 

Before proving this result, we recall the following fact:
\begin{equation}\label{trace_posit}
\forall x,y\in \posm, \ \Tr[xy]\geqslant 0,
\end{equation}
which is clear once we have observed that $\Tr[xy]=\Tr[\sqrt{x}y\sqrt{x}]$ and $\sqrt{x}y\sqrt{x} \in \posm$.  We also recall a result on matrix Riccati equations, see Dieci and Eirola~\cite{DieciEirola} Proposition 1.1.
\begin{lemme}\label{lem_riccati} Let $\tilde{b}\in \symm$ and $\tilde{\delta}\in \posm$.  Let~$\xi$ denote the solution of the following matrix Riccati differential equation
\begin{equation}\label{Ric_xi}\xi'+2\xi^2=\tilde{b}\xi+ \xi\tilde{b} + \tilde{\delta}, \ \xi(0)\in \symm. 
\end{equation}
If $\xi(0) \in \posm$, the solution~$\xi$ is well-defined for any $t\geqslant0$ and satisfies $\xi(t)\in \posm$. 
\end{lemme}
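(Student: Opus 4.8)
The plan is to treat~\eqref{Ric_xi} as an ODE on the finite-dimensional vector space $\mathcal{S}_d$ and to combine a classical Cauchy--Lipschitz argument with an invariance (tangency) argument for the positive semidefinite cone. First I would observe that the right-hand side $F(\xi):=\tilde b\xi+\xi\tilde b+\tilde\delta-2\xi^2$ is a polynomial in $\xi$, hence $C^\infty$ and locally Lipschitz, and that it maps $\mathcal{S}_d$ into $\mathcal{S}_d$ (each of $\tilde b\xi+\xi\tilde b$, $\xi^2$ and $\tilde\delta$ is symmetric when $\xi,\tilde b,\tilde\delta$ are). Thus the Cauchy--Lipschitz theorem provides a unique maximal solution $\xi$ with values in $\mathcal{S}_d$ on some interval $[0,T_{\max})$, and it remains to show that $\xi(t)\in\mathcal{S}_d^{+}$ there and that $T_{\max}=+\infty$.

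For the positivity, the key difficulty is that at a point where $\xi$ first touches the boundary of the cone the naive computation only yields a nonnegative, not a strictly positive, outward derivative. I would circumvent this by regularization: for $\epsilon>0$ let $\xi_\epsilon$ solve~\eqref{Ric_xi} with $\tilde\delta$ replaced by $\tilde\delta+\epsilon I_d$ and with initial datum $\xi(0)+\epsilon I_d\in\mathcal{S}_d^{+,*}$. Suppose, for contradiction, that $\xi_\epsilon$ leaves $\mathcal{S}_d^{+,*}$, and let $t_0>0$ be the first time it has a zero eigenvalue, with unit vector $u$ in the kernel of $\xi_\epsilon(t_0)$. Setting $g(t):=u\tp\xi_\epsilon(t)u$, continuity gives $g(t)>0$ for $t<t_0$ and $g(t_0)=0$, so the left derivative satisfies $g'(t_0)\leb 0$. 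On the other hand, using $\xi_\epsilon(t_0)u=0$ and the symmetry of $\xi_\epsilon$, the terms $u\tp\tilde b\xi_\epsilon u$, $u\tp\xi_\epsilon\tilde b u$ and $u\tp\xi_\epsilon^2 u$ all vanish at $t_0$, whence $g'(t_0)=u\tp(\tilde\delta+\epsilon I_d)u\geb \epsilon>0$, a contradiction. Hence $\xi_\epsilon(t)\in\mathcal{S}_d^{+,*}$ throughout its interval of existence.

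Finally I would obtain global existence from an a priori trace bound and then let $\epsilon\to0$. For any $\mathcal{S}_d^{+}$-valued solution, $h(t):=\Tr[\xi(t)]$ satisfies $h'=2\Tr[\tilde b\xi]+\Tr[\tilde\delta]-2\Tr[\xi^2]\leb 2\lambda_{\max}(\tilde b)h+\Tr[\tilde\delta]$, using $\Tr[\xi^2]\geb 0$ and $\Tr[\tilde b\xi]\leb \lambda_{\max}(\tilde b)\Tr[\xi]$ for $\xi\geb 0$; Gronwall's lemma then bounds $h$, and hence every eigenvalue of $\xi$, on each finite interval, which rules out finite-time blow-up and yields $T_{\max}=+\infty$. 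Applying this to $\xi_\epsilon$ gives a global, positive definite $\xi_\epsilon$ for every $\epsilon>0$. Continuous dependence of the solution on $(\tilde\delta,\xi(0))$ then gives $\xi_\epsilon\to\xi$ uniformly on compact subintervals of $[0,T_{\max})$, so $\xi(t)\in\overline{\mathcal{S}_d^{+,*}}=\mathcal{S}_d^{+}$; the trace bound now applies to $\xi$ itself and forces $T_{\max}=+\infty$. The delicate point is the positivity step, and more precisely the use of the $\epsilon$-regularization together with a fixed kernel vector $u$ at the first contact time (rather than a time-varying eigenbasis, which need not be differentiable at a repeated eigenvalue); this reproves the cited statement of Dieci and Eirola~\cite{DieciEirola} in the form we need.
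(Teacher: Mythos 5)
Your proof is correct, but note that the paper never proves Lemma~\ref{lem_riccati} at all: it is recalled verbatim from Dieci and Eirola~\cite{DieciEirola} (their Proposition~1.1), so your argument is a self-contained substitute for a citation rather than a variant of an in-paper proof. The two routes are genuinely different. The machinery behind the cited result, which the paper itself deploys later when it solves the Riccati equation~\eqref{eq_gamma} explicitly in the proof of Proposition~\ref{prop:lap_Wish}, is the linear representation in the spirit of Levin~\cite{Levin}, writing the solution as $\left[M_2(t)-\tfrac12 M_1(t)\tilde w\right]\left[M_4(t)-\tfrac12 M_3(t)\tilde w\right]^{-1}$ for a matrix exponential flow and reading off existence and positivity from that; you instead work directly on the cone: Cauchy--Lipschitz theory on $\symm$, an $\epsilon$-regularization of both $\tilde\delta$ and $\xi(0)$ combined with a first-contact-time argument to get invariance of $\dpos$, a Gronwall bound on $\Tr[\xi]$ to exclude blow-up, and continuous dependence to pass to the limit $\epsilon\to 0$. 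The individual steps all check out: at the first touching time $t_0$ the cancellations $u\tp\tilde b\xi_\epsilon(t_0)u=u\tp\xi_\epsilon(t_0)\tilde b u=u\tp\xi_\epsilon(t_0)^2u=0$ follow from $\xi_\epsilon(t_0)u=0$ and symmetry, the sign argument for the derivative of $g(t)=u\tp\xi_\epsilon(t)u$ is sound, freezing the kernel vector $u$ avoids any appeal to differentiability of eigenvalues, and the estimate $\Tr[\tilde b\xi]\leb\lambda_{\max}(\tilde b)\Tr[\xi]$ for $\xi\in\posm$ (a consequence of~\eqref{trace_posit}) makes the Gronwall step legitimate; the $\epsilon$-perturbation is exactly what is needed, since without it the boundary computation only yields $g'(t_0)\geb 0$ and a solution could a priori slide along the boundary of the cone. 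What the citation buys the paper is brevity and an explicit solution formula (needed anyway for the Laplace transform); what your argument buys is an elementary proof using nothing beyond basic ODE theory, which would make the paper self-contained on this point.
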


\begin{proof}[Proof of Proposition~\ref{prop:lap_Wish}]
Let $T>0$ be given. We first assume $w,v \in \posm$, which ensures that $\E\left[\exp\left(-\frac{1}{2} \Tr[w X_T + v R_T ] \right)\right]< \infty$. We consider the martingale
$$ M_t=\E\left[\exp\left(-\frac{1}{2} \Tr[w X_T + v R_T] \right) \bigg|\mathcal{F}_t\right], \ t\in[0,T].$$
Due to the affine structure, we are looking for smooth functions $\beta:\R_+ \rightarrow \R$, $\gamma,\delta:\R_+ \rightarrow \symm$ such that
$$M_t=\exp \left(\beta(T-t)+\Tr[\gamma(T-t)X_t]+\Tr[\delta(T-t)R_t] \right).$$
We necessarily have $\beta(0)=0$, $\gamma(0)=-w/2$ and $\delta(0)=-v/2$. Itô's formula gives
\begin{align*}
\frac{dM_t}{M_t}=&\Big\{-\beta'(T-t)-\Tr[\gamma'(T-t) X_t]- \Tr[\delta'(T-t)R_t ] + \Tr[\gamma(T-t)(\alpha I_d+bX_t+X_tb)] \\
&+ \Tr[\delta(T-t)X_t]+ 2 \Tr[\gamma(T-t)^2 X_t]  \Big\}dt + \Tr[\gamma(T-t)(\sqrt{X_t}dW_t +dW_t\tp \sqrt{X_t})]. \\
\end{align*}
Since $M$ is a martingale, the drift term should vanish almost surely. The drift term being a (deterministic) affine function of $(X_t,R_t)$, we obtain the following system of differential equations:
\begin{align}
&\delta'=0,  \label{eq_delta} \\
&-\gamma' + \gamma b +b \gamma + 2 \gamma^2 + \delta =0,  \label{eq_gamma}\\
&-\beta' + \alpha  \Tr[\gamma]=0.\label{eq_beta}
\end{align}
The first equation gives $\delta(t)=-v/2$. The second equation is a matrix Riccati differential equation. We now consider $\xi=m-\gamma$ with $m$ satisfying~\eqref{dom_lap}. It solves~\eqref{Ric_xi} with $\tilde{b}=b+2m$, $\tilde{\delta}=-bm-mb-2m^2+v/2$ and $\xi(0)=m+w/2$. We know then by Lemma~\ref{lem_riccati} that $\xi$ is well  defined for any $t\geb 0$ and stays in $\posm$. In particular,  $\gamma$  is well defined for any $t\geb 0$.

We set $\tilde{\gamma}=\gamma + \frac{1}{2}b$. We have $\gamma^2=\tilde{\gamma}^2-\frac{1}{2}(b \tilde{\gamma} +\tilde{\gamma}b) +\frac{1}{4}b^2$ and thus $\tilde{\gamma}$ solves the following  matrix Riccati differential equation: 
\begin{align*}
 \tilde{\gamma}'= 2\tilde{\gamma}^2 - \frac{1}{2} \tilde{v}, \ \tilde{\gamma}(0)=-\frac{1}{2}\tilde{w},
\text{ with } \tilde{v}=v+b^2 \text{ and } \tilde{w}=w-b.
\end{align*}
We set $M(t)=\left[\begin{array}{cc} M_1(t) &M_2(t) \\M_3(t) &M_4(t)
  \end{array}
  \right] = \exp \left( t \left[\begin{array}{cc} 0 & -\tilde{v}/2 \\ -2I_d &0
  \end{array}
  \right]\right) \in \mathcal{M}_{2d}$ and get by Levin~\cite{Levin} that  
$$ \tilde{\gamma}(t)=\left[M_2(t) -\frac{1}{2}M_1(t)\tilde{w} \right] \left[M_4(t) -\frac{1}{2}M_3(t)\tilde{w} \right]^{-1}.$$
We check that the matrix $M_4(t) -\frac{1}{2}M_3(t)\tilde{w}$ is indeed invertible. In fact, let $$\tau=\inf \left\{ t\geb 0, \det \left[ M_4(t) -\frac{1}{2}M_3(t)\tilde{w} \right]=0\right\}.$$ We have $\tau>0$ and for $t\in[0,\tau)$,
$\frac{d}{dt}[M_4(t) -\frac{1}{2}M_3(t)\tilde{w}]=-2\left[M_2(t) -\frac{1}{2}M_1(t)\tilde{w} \right]$ and thus $$\frac{d}{dt} \det \left[M_4(t) -\frac{1}{2}M_3(t)\tilde{w}\right]=-2 \det \left[M_4(t) -\frac{1}{2}M_3(t)\tilde{w}\right] \Tr[\tilde{\gamma}(t)].$$ This gives $\det \left[M_4(t) -\frac{1}{2}M_3(t)\tilde{w}\right]=\exp(-2 \int_0^t \Tr[\tilde{\gamma}(s)]ds )>0$, and we necessary get $\tau=+\infty$ since $\gamma$ and thus $\tilde{\gamma}$ is well defined for $t\geb 0$.

Since 
$$ \left[\begin{array}{cc} 0 & -\tilde{v}/2 \\ -2I_d &0
  \end{array}
  \right]^{2k}= \left[\begin{array}{cc} \tilde{v}^k & 0 \\ 0 & \tilde{v}^k
  \end{array}
  \right] \text{ and } \left[\begin{array}{cc} 0 & -\tilde{v}/2 \\ -2I_d &0
  \end{array}
  \right]^{2k+1}=\left[\begin{array}{cc} 0 & -\tilde{v}^{k+1}/2 \\ -2\tilde{v}^k &0
  \end{array}
  \right],$$ we get
$$M_1(t)=M_4(t)=\sum_{k=0}^\infty \frac{ t^{2k} \tilde{v}^k}{(2k)!}, \ M_2(t)=-\frac{1}{2} \sum_{k=0}^\infty \frac{t^{2k+1} \tilde{v}^{k+1}}{(2k+1)!}, \ M_3(t)=-2 \sum_{k=0}^\infty  \frac{t^{2k+1}\tilde{v}^{k}}{(2k+1)!}.$$
If  $\tilde{v}=v+b^2 \in \dpos$, $\sqrt{\tilde{v}}$ is well defined and we have $M_1(t)=M_4(t)=\cosh(t \sqrt{\tilde{v}})$, $M_2(t)=-\frac{1}{2} \sqrt{\tilde{v}}\sinh(t \sqrt{\tilde{v}})$ and $M_3(t)=-2 (\sqrt{\tilde{v}})^{-1}\sinh(t \sqrt{\tilde{v}})$.  Now, we define
$$V(t)=  M_4(t) -\frac{1}{2}M_3(t)\tilde{w}= \left( \sum_{k=0}^\infty t^{2k+1} \frac{\tilde{v}^{k}}{(2k+1)!} \right)\tilde{w}+\sum_{k=0}^\infty t^{2k} \frac{\tilde{v}^k}{(2k)!}.  $$ Since $V'(t)= - 2 M_2(t) +M_1(t)\tilde{w}  $, we obtain that
$$ \tilde{\gamma}(t)=  -\frac{1}{2} V'(t)V(t)^{-1} \text{ and thus }\gamma(t)=  -\frac{1}{2} \left( V'(t)V(t)^{-1} +b \right).$$
Last, we have $\beta'(t)= -\frac{1}{2} \alpha \Tr[V'(t)V(t)^{-1}]-\frac{1}{2} \alpha\Tr[b]$ and we obtain that
$$\beta(t)=-\frac{1}{2} \alpha \log(\det[V(t)])  -\frac{1}{2}\alpha  \Tr[b]t ,$$
since $\frac{d \det[V(t)]}{dt}=\det[V(t)] \Tr[V'(t)V(t)^{-1}]$.

It remains to show that we indeed have~\eqref{laplace_Wis_jointe} for $v$ and $w$ satisfying~\eqref{dom_lap}. We define $\mathcal{E}_t=\frac{\exp \left(\beta(T-t)+\Tr[\gamma(T-t)X_t]+\Tr[- \frac{v}{2}R_t] \right)}{\exp \left(\beta(T)+\Tr[\gamma(T)x] \right)}$. By Itô's formula, we have
$$ \frac{d \mathcal{E}_t }{ \mathcal{E}_t }= \Tr[\gamma(T-t)(\sqrt{X_t}dW_t +dW_t\tp \sqrt{X_t})].$$ 
This is a positive local martingale and thus a supermartingale which gives $\E[\mathcal{E}_T] \leqslant1$, and we want to prove that this is a martingale. To do so, we use the argument presented by Rydberg in~\cite{Rydberg}. For $L>0$, we define 
$$\tau_L= \inf \{t\geqslant 0, \Tr[X_t] \geqslant L \},$$
and $\pi_L(x)=x \mathbf{1}_{\Tr[x]\leqslant L} +  \frac{L} {\Tr[x]} x \mathbf{1}_{\Tr[x]> L} $ for $x \in \posm$. We consider $(\mathcal{E}^L_t,t\in[0,T])$ the solution of 
$$ d\mathcal{E}^L_t=\mathcal{E}^L_t \Tr[\gamma(T-t)(\sqrt{ \pi_L(X_t)}dW_t +dW_t\tp \sqrt{\pi_L(X_t)})], \  \mathcal{E}^L_0=1.$$ 
We clearly have $\E[\mathcal{E}^L_T]=1$. Besides, under $ \mathbb{P}^L$ given by  $\frac{d \mathbb{P}^L}{d \mathbb{P}}\big|_{\mathcal{F}_T}=\mathcal{E}^L_T$, the process 
$$dW^L_t=dW_t-2\sqrt{ \pi_L(X_t)}\gamma(T-t)dt,  \ t\in [0,T], $$
is a matrix Brownian motion. Since $\mathcal{E}_t=\mathcal{E}_t^L$ for $t\leqslant \tau_L$, we have $\E[\mathcal{E}_T]=\E[\mathcal{E}^L_T \mathbf{1}_{\tau_L>T}]+ \E[\mathcal{E}_T \mathbf{1}_{\tau_L \leqslant  T}]$. By Lebesgue's theorem, we get $\E[\mathcal{E}_T \mathbf{1}_{\tau_L \leqslant T}] \underset{L \rightarrow +\infty}{\rightarrow }0 $. On the other hand, $\E[\mathcal{E}^L_T \mathbf{1}_{\tau_L>T}]=\mathbb{P}^L(\tau_L>T)$. Let us consider the Wishart process $\tilde{X}$ starting from~$x$ such that
$$ d \tilde{X}_t= \left[\alpha I_d+ (b+2\gamma(T-t)) \tilde{X}_t+\tilde{X}_t(b+2 \gamma(T-t)) \right]dt+ \sqrt{\tilde{X}}_tdW_t+dW_t \tp \sqrt{\tilde{X}}_t.$$
We also define $\tilde{\tau}_L= \inf\{t \in [ 0, T], \Tr[\tilde{X}_t] \geqslant L \}$ with convention $\inf \emptyset= + \infty$. The process $\tilde{X}$ solves the same SDE on $[0,\tilde{\tau}_L \wedge T]$ under $\mathbb{P}$ as $X$ on $[0,\tau_L \wedge T]$ under $\mathbb{P}^L$. We therefore have 
$$ \mathbb{P}^L(\tau_L>T) = \mathbb{P}( \tilde{\tau_L}>T)  \underset{L \rightarrow +\infty}{\rightarrow }1 ,$$ 
which finally gives  $\E[\mathcal{E}_T] = 1$.
\end{proof}

\begin{coro}\label{cor_wis_gen}
Let $Y\sim WIS_d(y,\alpha,b,a)$ be a Wishart process with parameters $\alpha\geb d-1$, $y\in \posm$, $a,b\in \genm$ satisfying \begin{equation}\label{cond_a_b}
b a\tp  a= a\tp a b\tp  \text{ and } a \text{ invertible.} 
\end{equation}
Let $v,w\in \symm$ be such that 
\begin{equation}\label{cond_lapl_gen}
\exists m \in \symm, \ \frac{1}{2}awa\tp +m \in \posm \text{ and }\frac{ava\tp  }{2}-ab\tp a^{-1} m-m(a\tp )^{-1}ba\tp  -2m^2 \in \posm.
\end{equation}
Then, we have
\begin{align*}
&\E \left[\exp\left(-\frac{1}{2} \Tr\left[w Y_T + v \int_0^TY_s ds \right] \right)\right] \\
	&= \frac{\exp \big(- \frac{\alpha}{2} \Tr[b] t\big)}{\det[V_{v,w}(t)]^{ \frac{\alpha}{2}}} \exp \Big(- \frac{1}{2}\Tr \big[ (V'_{v,w}(t)V_{v,w}(t)^{-1}+ (a\tp )^{-1} b a\tp  ) (a\tp )^{-1} y a^{-1} \big] \Big),
\end{align*}
with $	V_{v,w}(t)= \left( \sum_{k=0}^\infty t^{2k+1} \frac{\tilde{v}^{k}}{(2k)!} \right)\tilde{w}+\sum_{k=0}^\infty t^{2k} \frac{\tilde{v}^k}{(2k)!} $ and 
\begin{align*}
	\tilde{v}=a v a\tp  +(a\tp )^{-1}b^2a\tp , \quad \mbox{and} \quad		\tilde{w}=a wa\tp  -(a\tp )^{-1}ba\tp .
\end{align*}
\end{coro}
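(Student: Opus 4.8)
The plan is to reduce the general case to Proposition~\ref{prop:lap_Wish} by the affine change of variables $Z_t := (a\tp)^{-1} Y_t a^{-1}$, exactly the normalization used in the introduction to reduce the diffusion coefficient to $I_d$. First I would invoke the scaling property (equation~(13) in~\cite{AA2013}), which yields $Z \sim WIS_d((a\tp)^{-1} y a^{-1}, \alpha, (a\tp)^{-1} b a\tp, I_d)$. The crucial observation is that the transformed drift $\tilde b := (a\tp)^{-1} b a\tp$ is symmetric: since $\tilde b\tp = a b\tp a^{-1}$, the identity $\tilde b = \tilde b\tp$ is equivalent, after multiplying on the left by $a\tp$ and on the right by $a$, to $b a\tp a = a\tp a b\tp$, which is precisely hypothesis~\eqref{cond_a_b}. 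This is what makes Proposition~\ref{prop:lap_Wish}, stated only for symmetric drift, applicable here; it is also the one genuinely substantive step, the rest being bookkeeping with conjugations and the cyclic trace identity.

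Next I would rewrite the exponent in terms of $Z$. Because $Y_t = a\tp Z_t a$, cyclicity of the trace gives $\Tr[w Y_t] = \Tr[(a w a\tp) Z_t]$ and $\Tr[v \int_0^T Y_s ds] = \Tr[(a v a\tp)\int_0^T Z_s ds]$. Hence the Laplace transform we seek coincides with the Laplace transform of $(Z_T, \int_0^T Z_s ds)$ tested against the symmetric pair $(w_Z, v_Z) := (a w a\tp, a v a\tp)$. Since $a$ is invertible, this correspondence is a genuine bijection and transfers the domain of validity verbatim.

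I would then apply Proposition~\ref{prop:lap_Wish} to $Z$ with this pair. Its domain condition~\eqref{dom_lap} reads, for symmetric $m$, $\frac{1}{2} a w a\tp + m \in \posm$ and $\frac{1}{2} a v a\tp - m \tilde b - \tilde b m - 2m^2 \in \posm$; using $\tilde b = (a\tp)^{-1} b a\tp = a b\tp a^{-1}$ (symmetry), the two middle terms become $-m (a\tp)^{-1} b a\tp$ and $-a b\tp a^{-1} m$, which is exactly~\eqref{cond_lapl_gen}. For the output I would match the constituent quantities: $\Tr[\tilde b] = \Tr[b]$ by conjugation invariance; $\tilde b^2 = (a\tp)^{-1} b^2 a\tp$, so that $v_Z + \tilde b^2 = \tilde v$ and $w_Z - \tilde b = \tilde w$ as defined in the statement, whence the matrices $V_{v,w}(t), V'_{v,w}(t)$ are the ones produced by the proposition for $Z$; and the initial-condition factor becomes $\exp(-\frac{1}{2}\Tr[(V'_{v,w}(t)V_{v,w}(t)^{-1} + (a\tp)^{-1} b a\tp)\,(a\tp)^{-1} y a^{-1}])$. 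Substituting these matches the claimed formula and completes the proof.
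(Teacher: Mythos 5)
Your proposal is correct and follows essentially the same route as the paper: reduce to the identity-diffusion case via $X_t=(a\tp)^{-1}Y_ta^{-1}$ (equation~(13) of~\cite{AA2013}), note that~\eqref{cond_a_b} is exactly the symmetry of the transformed drift $(a\tp)^{-1}ba\tp$, rewrite the exponent by trace cyclicity with the pair $(awa\tp,ava\tp)$, and apply Proposition~\ref{prop:lap_Wish}. Your write-up is in fact slightly more detailed than the paper's, since you explicitly verify that~\eqref{dom_lap} transforms into~\eqref{cond_lapl_gen} and that $\tilde v$, $\tilde w$, $\Tr[b]$ and the initial-condition factor all match.
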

\begin{proof}
  We know that $Y\underset{law}{=}a\tp  Xa $ with $x=(a\tp )^{-1}ya^{-1}$ and $X \sim WIS_d(x,\alpha,(a\tp )^{-1}ba\tp ,I_d)$, see e.g. equation~(13) in~\cite{AA2013}. We notice that $(a\tp )^{-1}ba\tp =ab\tp a^{-1} \iff b a\tp  a= a\tp a b\tp $ and thus $(a\tp )^{-1}ba\tp  \in \symm$.
We have $$\E \left[\exp\left(-\frac{1}{2} \Tr\left[w Y_T + v \int_0^TY_s ds \right] \right)\right]= \E \left[\exp\left(-\frac{1}{2} \Tr\left[awa\tp  X_T + ava\tp  \int_0^TX_s ds \right] \right)\right], $$
which gives the result by applying Proposition~\ref{prop:lap_Wish}.
\end{proof}
By setting $\tilde{m}=a^{-1}m(a\tp )^{-1}$, the condition~\eqref{cond_lapl_gen} is equivalent to the existence of $\tilde{m} \in \symm$, such that
\begin{equation}\label{cond_lap_finie}
 \frac{1}{2}w+\tilde{m} \in \posm  \text{ and }\frac{v }{2}- b\tp   \tilde{m} - \tilde{m} b  -2 \tilde{m} a\tp  a\tilde{m} \in \posm. 
\end{equation}
The case $m=0$ gives back the finiteness of the Laplace transform when $v,w \in \posm$. If we take $\tilde{m}=-w/2$, we get also the finiteness when 
\begin{equation}\label{cond_lapl_gen_utile}
v+b\tp w+wb-wa\tp aw\in \posm. 
\end{equation}
Another interesting choice is $m=-\frac{1}{2}(a\tp )^{-1}ba\tp $. We have $m\in\symm$ from~\eqref{cond_a_b}. This choice  gives the finiteness of the Laplace transform when $v+b\tp (a\tp a)^{-1}b \in \posm$ and $w-(a\tp a)^{-1}b \in \posm $. Let us note that $\tilde{v}=a(v+b\tp (a\tp a)^{-1}b)a\tp $ so that the first condition is the same as $\tilde{v} \in \posm$. Another interesting choice of~$m$ is given by the next remark. 
\begin{rmrk}\label{rk_setcv} Proposition~\ref{prop:lap_Wish} extends the result of  Gnoatto and Grasselli~\cite{GnoattoGrasselli} to $\alpha\geb d-1$, and the sufficient condition~\eqref{cond_lapl_gen} that ensures the finiteness of the Laplace transform is also less restrictive, which is crucial in our study especially in the nonergodic case. In particular, it does not assume a priori that $v+b\tp (a\tp a)^{-1}b \in \posm$. We can recover the result of~\cite{GnoattoGrasselli} as follows. Let us assume $v+b\tp (a\tp a)^{-1}b \in \posm$ and take $m=-\frac{ (a\tp )^{-1}ba\tp }{2}+\frac{1}{2}\sqrt{a(v+b\tp (a\tp a)^{-1}b)a\tp }$. We have $m\in \symm$ from~\eqref{cond_a_b} and it satisfies $\frac{ava\tp  }{2}-ab\tp a^{-1} m-m(a\tp )^{-1}ba\tp  -2m^2=0 \in \posm$. Therefore, \eqref{cond_lapl_gen} holds if
$$ w - (a\tp a)^{-1}b + a^{-1} \sqrt{a(v+b\tp (a\tp a)^{-1}b)a\tp } (a\tp )^{-1}\in \posm.  $$
This is precisely the condition stated in~\cite{GnoattoGrasselli}. 
\end{rmrk}
\begin{rmrk}\label{rk_ext_Aff} It is possible to get similarly the Laplace transform of $\left(Y_T,\int_0^TY_sds \right)$ when $Y$ solves 
$$  dY_t=\left[\overline{\alpha} +bY_t+Y_tb\tp  \right]dt+ \sqrt{Y}_tdW_t a+a\tp  dW_t \tp \sqrt{Y}_t , \ Y_0=y\in \posm,$$
with $a,b$ satisfying~\eqref{cond_a_b} and $\overline{\alpha}-(d-1)a\tp a \in \posm$. Again, equation~(13) in~\cite{AA2013} gives $Y\underset{law}{=}a\tp  Xa $, where 
$$  dX_t=\left[\hat{\alpha} +\hat{b}X_t+X_t\hat{b}\tp  \right]dt+ \sqrt{X}_tdW_t +  dW_t \tp \sqrt{X}_t , \ X_0=x,$$
with $x=(a\tp )^{-1}ya^{-1}\in \symm$, $\hat{b}=(a\tp )^{-1}ba\tp \in \symm$ and $\hat{\alpha}=(a\tp )^{-1}\overline{\alpha} a^{-1}\in \symm$. Repeating the proof of Proposition~\ref{prop:lap_Wish}, we observe that the Riccati equation~\eqref{eq_gamma} and equation~\eqref{eq_delta} remain unchanged while~\eqref{eq_beta} is replaced by 
$$\beta'=\Tr[\hat{\alpha} \gamma]=-\frac{1}{2}\Tr[\hat{\alpha} V'(t)V(t)^{-1}]-\frac{1}{2}\Tr[\hat{\alpha} \hat{b}].$$
Therefore, we deduce that under the same condition~\eqref{cond_lapl_gen}, we have
 \begin{align*}
&\E \left[\exp\left(-\frac{1}{2} \Tr\left[w Y_T + v \int_0^TY_s ds \right] \right)\right] \\
	&=  \exp (\beta(T)) \exp \Big(- \frac{1}{2}\Tr \big[ (V'_{v,w}(t)V_{v,w}(t)^{-1}+ (a\tp )^{-1} b a\tp  ) (a\tp )^{-1} y a^{-1} \big] \Big),
\end{align*}
with $\beta(t)=-\frac{1}{2} \int_0^t\Tr[(a\tp )^{-1}\overline{\alpha} a^{-1} V'_{v,w}(s)V_{v,w}(s)^{-1}]ds-\frac{t}{2}\Tr[\overline{\alpha}(a\tp a)^{-1}b]$ and $V_{v,w}(t)$ defined as in Corollary~\ref{cor_wis_gen}. Thus, the formula is no longer totally explicit. In Gnoatto and Grasselli~\cite{GnoattoGrasselli},  the result is stated with $\Tr[(a\tp )^{-1}\overline{\alpha} a^{-1}\log(V_{v,w}(t))]$ instead of the first integral. However, this replacement does not seem clear to us unless $V'_{v,w}(s)$ and $V_{v,w}(s)$ commute for all $s\geb 0$ (this happens when the matrices $\tilde{v}$ and $\tilde{w}$ in~$V_{v,w}$ commute) or $\overline{\alpha}=\alpha a\tp a$ by using the trace cyclic theorem.
\end{rmrk}

\begin{coro}\label{cor_girsanov}
Let $Y\sim WIS_d(y,\alpha,b,a)$ be a Wishart process with parameters such that $b a\tp  a= a\tp a b\tp $ and $a$  invertible. Then,
$$\forall u \in \symm, \ \E\left[ \exp \left( \int_0^T \Tr[u \sqrt{Y_s} dW_s a ] ds - \frac12\int_0^T \Tr[ a uY_s u a\tp  ] ds \right)\right]=1 .$$
\end{coro}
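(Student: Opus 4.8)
The plan is to recognize the quantity inside the expectation as the terminal value of a Doléans--Dade exponential and to prove that this local martingale is a true martingale. Writing $m_t=\int_0^t \Tr[u\sqrt{Y_s}dW_s a]$, a direct bracket computation gives $\Tr[u\sqrt{Y}dW a]=\sum_{k,l}(au\sqrt{Y})_{lk}(dW)_{kl}$, and since the entries of $W$ are independent Brownian motions, $\langle m\rangle_t=\int_0^t \Tr[(au\sqrt{Y_s})(au\sqrt{Y_s})\tp]ds=\int_0^t \Tr[auY_sua\tp]ds$, where I used $u\in\symm$ and $\sqrt{Y_s}\in\posm$. Thus the expression in the statement is exactly $\mathcal{E}_T$ with $\mathcal{E}_t=\exp(m_t-\tfrac12\langle m\rangle_t)$, a nonnegative local martingale, hence a supermartingale, so that $\E[\mathcal{E}_T]\leb 1$. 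The whole content of the corollary is the reverse inequality.

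The guiding idea is a Girsanov transform. If $\mathcal{E}$ were a genuine martingale, then under the measure with density $\mathcal{E}_T$ the process $W_t-\int_0^t \sqrt{Y_s}ua\tp ds$ would be a matrix Brownian motion, and substituting $dW_t=dW^{\mathrm{new}}_t+\sqrt{Y_t}ua\tp dt$ into the SDE for $Y$ would turn it into a Wishart process $WIS_d(y,\alpha,\tilde b,a)$ with shifted drift $\tilde b=b+a\tp a\,u$. I would note that $\tilde b$ still satisfies the commutation relation $\tilde b a\tp a=a\tp a\tilde b\tp$: indeed $\tilde b a\tp a=ba\tp a+a\tp a\,u\,a\tp a=a\tp a b\tp+a\tp a\,u\,a\tp a=a\tp a\tilde b\tp$, using the hypothesis $ba\tp a=a\tp a b\tp$ and $u\in\symm$. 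In particular $\tilde b$ defines a bona fide Wishart process, globally defined and non-exploding.

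To make this rigorous I would reproduce the truncation argument of Rydberg already used in the proof of Proposition~\ref{prop:lap_Wish}. For $L>0$ set $\tau_L=\inf\{t\geb0,\Tr[Y_t]\geb L\}$ and $\pi_L(x)=x\mathbf{1}_{\Tr[x]\leb L}+\frac{L}{\Tr[x]}x\mathbf{1}_{\Tr[x]>L}$, and let $\mathcal{E}^L$ be the stochastic exponential obtained by replacing $\sqrt{Y_s}$ with $\sqrt{\pi_L(Y_s)}$. Its integrand being bounded, $\mathcal{E}^L$ is a true martingale with $\E[\mathcal{E}^L_T]=1$, and under $\Px^L$ defined by $\frac{d\Px^L}{d\Px}|_{\cF_T}=\mathcal{E}^L_T$ the process $W^L_t=W_t-\int_0^t\sqrt{\pi_L(Y_s)}ua\tp ds$ is a matrix Brownian motion. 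Since $\mathcal{E}_t=\mathcal{E}^L_t$ for $t\leb\tau_L$, I split $\E[\mathcal{E}_T]=\Px^L(\tau_L>T)+\E[\mathcal{E}_T\mathbf{1}_{\tau_L\leb T}]$, where Lebesgue's theorem gives $\E[\mathcal{E}_T\mathbf{1}_{\tau_L\leb T}]\to0$ as $L\to\infty$. On $[0,\tau_L\wedge T]$ the process $Y$ solves, under $\Px^L$, the Wishart SDE with drift $\tilde b$ driven by $W^L$; comparing with a genuine Wishart process $\tilde Y\sim WIS_d(y,\alpha,\tilde b,a)$ and its explosion time $\tilde\tau_L$ yields $\Px^L(\tau_L>T)=\Px(\tilde\tau_L>T)$.

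The hard part, and the heart of the argument, is precisely this last step: ruling out the mass loss $\E[\mathcal{E}_T]<1$ amounts to showing that the twisted process cannot explode before time $T$. This holds because $\tilde Y$ is an honest Wishart process, defined for all $t\geb0$ since $\alpha\geb d-1$, so $\Tr[\tilde Y_t]<\infty$ a.s. and $\tilde\tau_L\uparrow+\infty$ a.s. as $L\to\infty$, whence $\Px(\tilde\tau_L>T)\to1$. Letting $L\to\infty$ in the decomposition then gives $\E[\mathcal{E}_T]=1$, which is the claim. I expect the only delicate point to be justifying the identity $\Px^L(\tau_L>T)=\Px(\tilde\tau_L>T)$ at the level of laws of the stopped processes, exactly as in the proof of Proposition~\ref{prop:lap_Wish}.
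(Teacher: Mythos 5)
Your proof is correct, but it follows a genuinely different route from the paper's. The paper's own proof never touches Girsanov directly: it first uses Itô's formula to rewrite $2\int_0^T \Tr[u\sqrt{Y_s}\,dW_s a]$ as $\Tr[u(Y_T-y)]-\alpha T \Tr[ua\tp a]-\Tr[(ub+b\tp u)\int_0^T Y_s\,ds]$, so that the expectation becomes a joint Laplace transform of $(Y_T,\int_0^T Y_s\,ds)$ with $w=-u$ and $v=ub+b\tp u+ua\tp a u$; it then invokes the explicit formula of Corollary~\ref{cor_wis_gen} (the condition~\eqref{cond_lapl_gen_utile} holds with equality to zero), and the identity $\tilde{v}=\tilde{w}^2$ makes the closed-form expression collapse to $1$. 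You instead prove the martingale property of the stochastic exponential directly, by Girsanov plus the Rydberg truncation and non-explosion argument --- essentially transplanting the last step of the proof of Proposition~\ref{prop:lap_Wish} to this setting, with the simplification that your drift shift $\tilde b=b+a\tp a u$ is time-homogeneous, so the comparison process is an honest Wishart process rather than a time-inhomogeneous one. What each approach buys: the paper's proof is a two-line computation once the Laplace-transform machinery of Section~\ref{Sec_laplace} is in place, and the cancellation $\tilde v=\tilde w^2$ is the structural reason the answer is exactly $1$; your proof bypasses that machinery entirely and is in fact more general --- nowhere do you actually use the hypotheses $ba\tp a=a\tp a b\tp$ or $a$ invertible, which the paper needs only because Corollary~\ref{cor_wis_gen} requires them. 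One small inaccuracy in your write-up: you suggest that the commutation relation for $\tilde b$ is what makes the twisted process ``a bona fide Wishart process, globally defined and non-exploding.'' That verification is harmless but unnecessary: weak existence, uniqueness and non-explosion of $WIS_d(y,\alpha,\tilde b,a)$ hold for \emph{any} $\tilde b\in\genm$ when $\alpha\geb d-1$ (as recalled in the introduction of the paper), which is all your argument needs.
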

\begin{proof}
We have $2 \int_0^T \Tr[u \sqrt{Y_s} dW_s a ] ds= \Tr[u(Y_T-y)]-\alpha T \Tr[ua\tp a] - \Tr\left[ (ub+b\tp u)  \int_0^TY_sds\right]$. We apply Corollary~\ref{cor_wis_gen} with $w= -u$ and $v=ub+b\tp u+u a\tp au$. Therefore, \eqref{cond_lapl_gen_utile} holds. We then have $\tilde{w}=-(aua\tp +(a\tp )^{-1}ba\tp )$ and $\tilde{v}=\tilde{w}^2$ and the result follows by simple calculations. 
\end{proof}

\subsection{Study of the MLE of $b$ with the Laplace transform}\label{subsec_lapl_MLE}

We consider $\epsilon:\R_+ \rightarrow \R_+^*$ a (deterministic) decreasing function such that $\lim_{t\rightarrow  + \infty} \epsilon_t =0$. From the definition of the MLE of~$b$~\eqref{MLE_bseul}, we get that
$$\frac{1}{\epsilon_T}(\hat{b}_T-b)=\mathcal{L}^{-1}_{\epsilon_T^2 R_T}(\epsilon_T[X_T-x-\alpha T I_d-bR_T-R_Tb]).$$
Thus, we want to calculate the Laplace transform of $(\epsilon_T[X_T-x-\alpha T I_d-bR_T-R_Tb], \epsilon_T^2 R_T)$ in order to study the convergence of $\frac{1}{\epsilon_T}(\hat{b}_T-b)$. For $\lambda_1,\lambda_2 \in \symm$, we define
\begin{align}
\label{eq:Laplace_lim_UT_RT} 
&\mathbf {\mathcal{E}}(T,\lambda_1,\lambda_2) : =\E_\theta \Big[  \exp \Big(  -{ \epsilon_T  } \Tr [\lambda_2(X_T-x-\alpha T I_d-bR_T-R_Tb) ]- {\epsilon_T^2 } \Tr [\lambda_1 R_T]\Big) \Big] \\
&=  \exp \left(  { \epsilon_T  } \Tr[\lambda_2 (x+\alpha T I_d)] \right) \E_\theta \Big[  \exp \Big(  - \Tr [ \epsilon_T\lambda_2X_T]-  \Tr [(\epsilon_T^2\lambda_1 -\epsilon_T (\lambda_2b +b \lambda_2)) R_T]\Big) \Big].  
\end{align}
We now consider $\lambda_1,\lambda_2 \in \symm$ such that
\begin{align}\label{condition_sur_lambdas}
\lambda_1-2 \lambda_2^2 \in \dpos.
\end{align}
We define 
\begin{align}
        v_T= 2\lambda_1 \epsilon_T^2 - 2(b\lambda_2+\lambda_2b) \epsilon_T, \quad \tilde{v}_T=v_T+b^2, \quad w_T=2\lambda_2 \epsilon_T, \quad \tilde{w}_T=w_T-b,
\end{align}
and have $v_T+bw_T+w_Tb-w_T^2=\epsilon_T^2(2\lambda_1-4\lambda_2^2) \in \dpos$. Thus, by applying Proposition~\ref{prop:lap_Wish} with $m=-\epsilon_T \lambda_2$, we get that $\mathbf {\mathcal{E}}(T,\lambda_1,\lambda_2)$ is finite and given by
\begin{align}
 {\mathcal{E}}(T,\lambda_1,\lambda_2) 	=& \frac{\exp \big(- \frac{\alpha}{2} \Tr[b] T\big)}{\det[V_{v_T,w_T}(T)]^{ \frac{\alpha}{2}}} \exp \Big(- \frac{1}{2}\Tr \big[ (V'_{v_T,w_T}(T)V_{v_T,w_T}(T)^{-1}+b  )x  \big]  \Big) \nonumber \\
& \times \exp  \Big(  { \epsilon_T  } \Tr \big[\lambda_2(x+\alpha T I_d  ) \big] \Big)
\label{eq:loi_limite_estim_b}
\end{align}

with 
\begin{align*}
    	V_{v_T,w_T}(T)=& (\sqrt{\tilde{v}_T})^{-1}\sinh(\sqrt{\tilde{v}_T} T)\tilde{w}_T+\cosh(\sqrt{\tilde{v_T}} T)\\
    	 V'_{v_T,w_T}(T)=&  \cosh(\sqrt{\tilde{v}_T} T)\tilde{w}_T+\sinh(\sqrt{\tilde{v}_T} T)\sqrt{\tilde{v}_T}.
\end{align*}
 Besides, we have $\tilde{v}_T=(b-2 \epsilon_T \lambda_2)^2 +  \epsilon_T^2(2\lambda_1-4\lambda_2^2) \in \dpos$. 

When $-b \in \dpos$ and $\epsilon_T=1/\sqrt{T}$, we can make  explicit calculations and get $$\lim_{T\rightarrow + \infty} {\mathcal{E}}(T,\lambda_1,\lambda_2)=\exp(-\Tr[\lambda_1\rbi]-\Tr[2 \lambda_2^2 \rbi]),$$ which gives another mean to prove Theorem~\ref{thm_bseul_ergo}. Here, we prove Theorem~\ref{theo_b0_pos}.

\begin{proof}[Proof of Theorem~\ref{theo_b0_pos}] Here, we focus on the case $b=b_0 I_d$ with $b_0>0$ and set $\epsilon_T=e^{-b_0T}$.  Since the square root function is analytic on the set of positive definite matrices (see e.g. \cite{Rogers_Williams_2000_Vol2}, p.~134) we get that
$$\sqrt{\tilde{v}_T}=b_0 I_d - 2 \epsilon_T \lambda_2 + \frac{\epsilon_T^2}{b_0}(\lambda_1-2 \lambda_2^2) +O(\epsilon_T^3),$$
since the squares of each sides coincides up to a $O(\epsilon_T^3)$ term. We observe that $\tilde{w}_T=2 \epsilon_T \lambda_2-b_0I_d$, and thus $\sqrt{\tilde{v}_T}+\tilde{w}_T=\frac{\epsilon_T^2}{b_0}(\lambda_1-2 \lambda_2^2) +O(\epsilon_T^3).$

We now write
\begin{align*}
V_{v_T,w_T}(T)=& (\sqrt{\tilde{v}_T})^{-1}\left[ \frac{1}{2} \exp(\sqrt{\tilde{v}_T} T) ( \sqrt{\tilde{v}_T}+ \tilde{w}_T)  + \frac{1}{2} \exp(-\sqrt{\tilde{v}_T} T) ( \sqrt{\tilde{v}_T}- \tilde{w}_T) \right]\\
    	 V'_{v_T,w_T}(T)=&  \frac{1}{2} \exp(\sqrt{\tilde{v}_T} T) ( \sqrt{\tilde{v}_T}+ \tilde{w}_T)  + \frac{1}{2} \exp(-\sqrt{\tilde{v}_T} T) (  \tilde{w}_T - \sqrt{\tilde{v}_T}).
\end{align*}
Since $\epsilon_T \exp(\sqrt{\tilde{v}_T} T) \underset{T\rightarrow + \infty}{\rightarrow } I_d$, we get 
$ \frac{1}{\epsilon_T} V_{v_T,w_T}(T)\underset{T\rightarrow + \infty}{\rightarrow }  \frac{1}{ b_0}\left[ \frac{1}{2 b_0}(\lambda_1-2 \lambda_2^2)+b_0I_d \right]$ and $\frac{1}{\epsilon_T} V'_{v_T,w_T}(T) \underset{T\rightarrow + \infty}{\rightarrow }  \frac{1}{2 b_0}(\lambda_1-2 \lambda_2^2)-b_0I_d  $. This yields to
$$V'_{v_T,w_T}(T)V_{v_T,w_T}(T)^{-1}+b_0 I_d  \underset{T\rightarrow + \infty}{\longrightarrow }  (\lambda_1-2\lambda_2^2) \left(\frac{1}{2 b_0}(\lambda_1-2\lambda_2^2)+b_0I_d \right)^{-1}. $$
We also have $\frac{\exp \big(- \frac{\alpha}{2} \Tr[b_0 I_d] T\big)}{\det[V_{v_T,w_T}(T)]^{ \frac{\alpha}{2}}}=\frac{1}{\det[ \epsilon_T^{-1} V_{v_T,w_T}(T)]^{ \frac{\alpha}{2}}}\underset{T\rightarrow + \infty}{\rightarrow } \frac{1}{\det\left[ \frac{1}{ b_0}\left[ \frac{1}{2 b_0}(\lambda_1-2 \lambda_2^2)+b_0I_d \right] \right]}$, and therefore

\begin{equation}\label{lap_b0_pos}\lim_{T\rightarrow + \infty} {\mathcal{E}}(T,\lambda_1,\lambda_2)=\frac{\exp \left( -  \frac{1}{2b_0}\Tr\left[ (\lambda_1-2\lambda_2^2) \left(\frac{1}{2 b_0^2}(\lambda_1-2\lambda_2^2)+I_d \right)^{-1}x \right]\right)}{\det \left[  \frac{1}{2 b_0^2}(\lambda_1-2 \lambda_2^2)+I_d  \right]} .
\end{equation}

We now want to identify the limit.  We know that $X \sim WIS_d\left(\frac{x}{2b_0},\alpha,0,I_d;\frac{1}{4b_0^2} \right) $  has the following Laplace transform
$$u \in \posm, \E[\exp(-\Tr[uX])]= \frac{\exp\left(-\Tr\left[u (I_d+\frac{1}{2b_0^2}u)^{-1} \frac{x}{2b_0} \right]\right)}{\det[I_d+\frac{1}{2b_0^2}u]}.$$
Let $\tilde{\mathbf{G}}$ denote a $d$-square matrix independent from~$X$, whose entries are independent and follow a standard Normal distribution. By Lemma~\ref{lemme_matgauss}, we have
\begin{align*}
\E[\exp(-\Tr[\lambda_1 X + \lambda_2(\sqrt{X} \tilde{\mathbf{G}}+\tilde{\mathbf{G}} \sqrt{X})])]&=\E[\exp(-\Tr[(\lambda_1-2\lambda_2^2) X])].
\end{align*}
Thus, \eqref{lap_b0_pos} shows the convergence in law of $\left( \epsilon_T (X_T-x-\alpha T I_d-bR_T-R_Tb),  \epsilon_T^2  R_T \right)$ to $\left( X, \sqrt{X} \tilde{\mathbf{G}}+\tilde{\mathbf{G}} \sqrt{X} \right)$ under $\Px_\theta$, which gives the claim of  Theorem~\ref{theo_b0_pos}. 
\end{proof}

\section{Numerical Study}\label{Sec_Num_Study}

In this section, we test the convergence of the MLE given by \eqref{eq:estimator_couple} and \eqref{MLE_bseul}. To do so, we consider a given large value of~$T$ and simulate the Wishart process exactly on the regular time grid $t_i=\frac{iT}{N}$, $i=0, \cdots, N$. This can be done by using the method presented in Ahdida and Alfonsi~\cite{AA2013}, see also Alfonsi~\cite{Alfonsibook}. We take $N$ sufficiently large and approximate the integrals $R_T$ and $Q_T^{-1}$ applying the trapezoidal rule along this time grid. Thus, we will use the estimator with the exact value of~$X_T$ and these approximated values of $R_T$ and $Q_T^{-1}$.

This section has three goals. First, we check numerically the convergence results that we have obtained. Second, we investigate numerically the convergence of the MLE in some nonergodic cases, where no theoretical result of convergence have been found. Last, we test the estimation of the parameters of a full Wishart process~\eqref{SDE_Intro}. To do so, we estimate first $a$ with the quadratic variation and then the parameters $\alpha$ and $b$ by using the MLE~\eqref{eq:estimator_couple} on the process $(a\tp)^{-1}X a^{-1}$.

\newsavebox{\smlmat}
\newsavebox{\smlmatrix}
\newsavebox{\smlmatx}

\subsection{Numerical validation of the convergence results}

Using the method mentioned above, we have checked the convergence results obtained in this paper. Namely, we sample $M=10000$ independent paths of~$X$ in order to draw an histogram of the properly rescaled value of $\hat{b}_{i,j}-b_{i,j}$ or $\hat{\alpha}-\alpha$. We do not reproduce all these graphics here, and present for example in Figure~\ref{fig:b_lambdaI}  an illustration of the convergence given by Theorem~\ref{theo_b0_pos}.

\savebox{\smlmat}{$\left(\begin{smallmatrix} 0.5 & 0.1 \\
0.1 & 0.3 \end{smallmatrix}\right)$}

\savebox{\smlmatrix}{$\left(\begin{smallmatrix} 0.3 & 0.1 \\
0.1 & 0.2 \end{smallmatrix}\right)$}

\begin{figure}[H]
\begin{centering}
        \begin{subfigure}[b]{0.45\textwidth}
                \includegraphics[width=\textwidth]{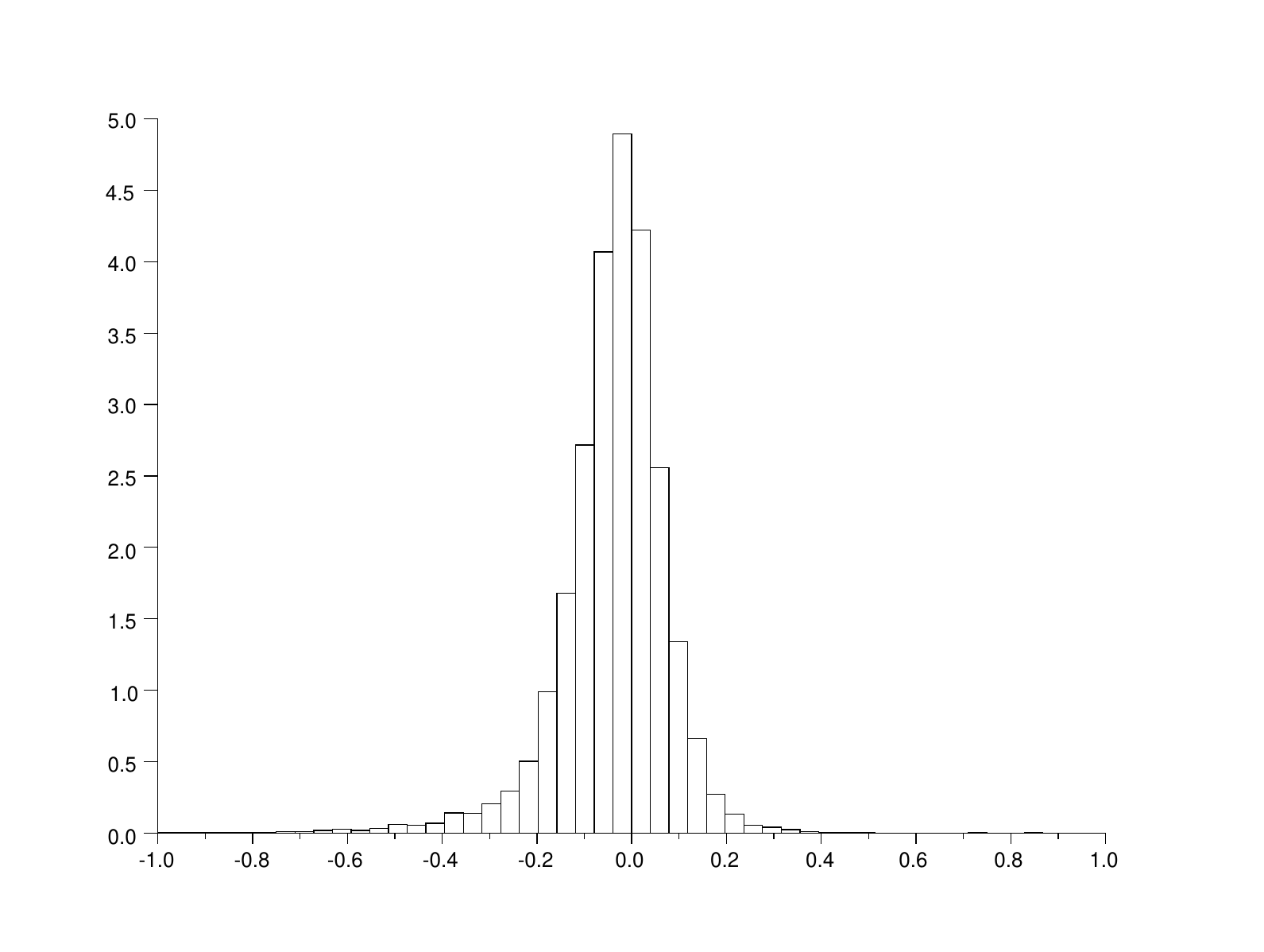}
                \caption{Limit law of $\exp(0.05T)(b-\hat{b}_T)_{1,1}$. }
                \label{fig:b_lambdaI_non_ergo_b_11}
        \end{subfigure}
        \begin{subfigure}[b]{0.45\textwidth}
                \includegraphics[width=\textwidth]{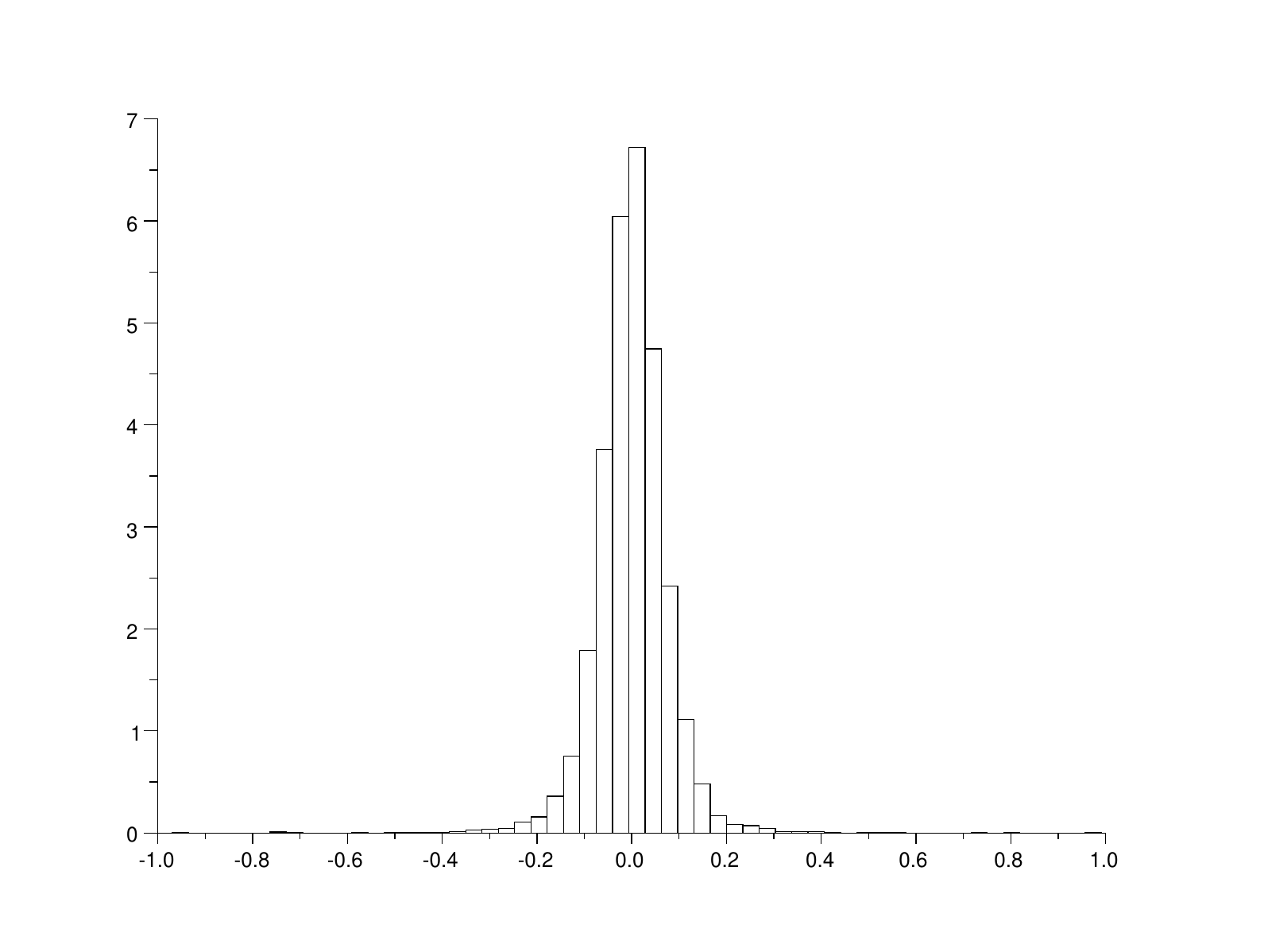}
                \caption{Limit law of $\exp(0.05T)(b-\hat{b}_T)_{1,2}$.}
                \label{fig:b_antidiag_lambdaI}
        \end{subfigure}     

        \caption{Asymptotic law of the error for the estimation of $\theta=b$ with for: $x=$~\usebox{\smlmat}, $T=100$, $N=10000$, $\alpha=4.5$ and $b=0.05I_d$.} \label{fig:b_lambdaI}
\end{centering}
\end{figure}

\subsection{Experimental convergence in a nonergodic case}

In this paragraph, we try to guess the asymptotic behavior of the MLE in a nonergodic case, where no theoretical convergence result is known. Namely,
 we observe in Figure~\ref{fig:b_lambdaI_presum} the asymptotic estimation error, when $b=\diag(0.1,0.005)$ is diagonal with positive and distinct terms on its diagonal and when we use the estimator~\eqref{MLE_bseul}. As one might have guess, the convergence of the diagonal terms seems to be with an exponential rate,  with the exponential speed corresponding to its value. Namely, $\hat{b}_{11}$ seems to converge to $b_{11}$ with a speed of $\exp(0.1T)$ while $\hat{b}_{22}$ seems to converge to $b_{22}$ with a speed of $\exp(0.005T)$.  More interesting is the antidiagonal term. One could have imagine that the convergence rate is the slowest of these two rates. Instead, on our experiment, the convergence of $\hat{b}_{12}$ towards $b_{12}$ seems to happen with the rate $\exp(0.1T)$. We have observed the same behaviour for other parameter values. Of course, it would be hasty to draw a global conclusion from few particular experiments. However, it is interesting to note that these numerical tests are a way to guess or check the convergence rate of the MLE.

\savebox{\smlmat}{$\left(\begin{smallmatrix} 0.5 & 0.1 \\
0.1 & 0.3 \end{smallmatrix}\right)$}

\savebox{\smlmatrix}{$\left(\begin{smallmatrix} 0.3 & 0.1 \\
0.1 & 0.2 \end{smallmatrix}\right)$}

\begin{figure}[H]
\begin{centering}   
        \begin{subfigure}[b]{0.45\textwidth}
                \includegraphics[width=\textwidth]{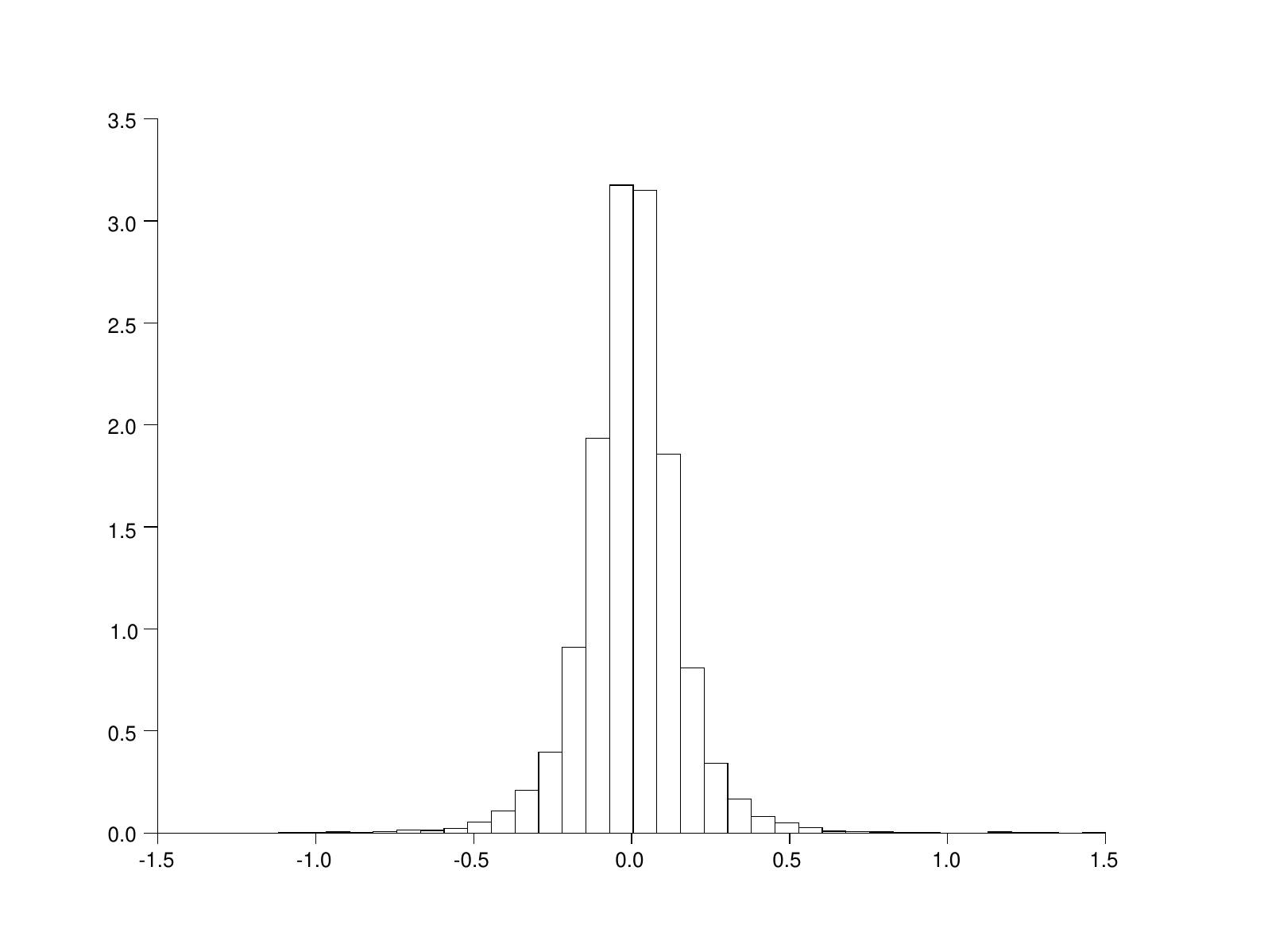}
                \caption{Limit law of $\exp(0.1T)(b-\hat{b}_T)_{1,1}$. }
        \end{subfigure}
         \begin{subfigure}[b]{0.45\textwidth}
                \includegraphics[width=\textwidth]{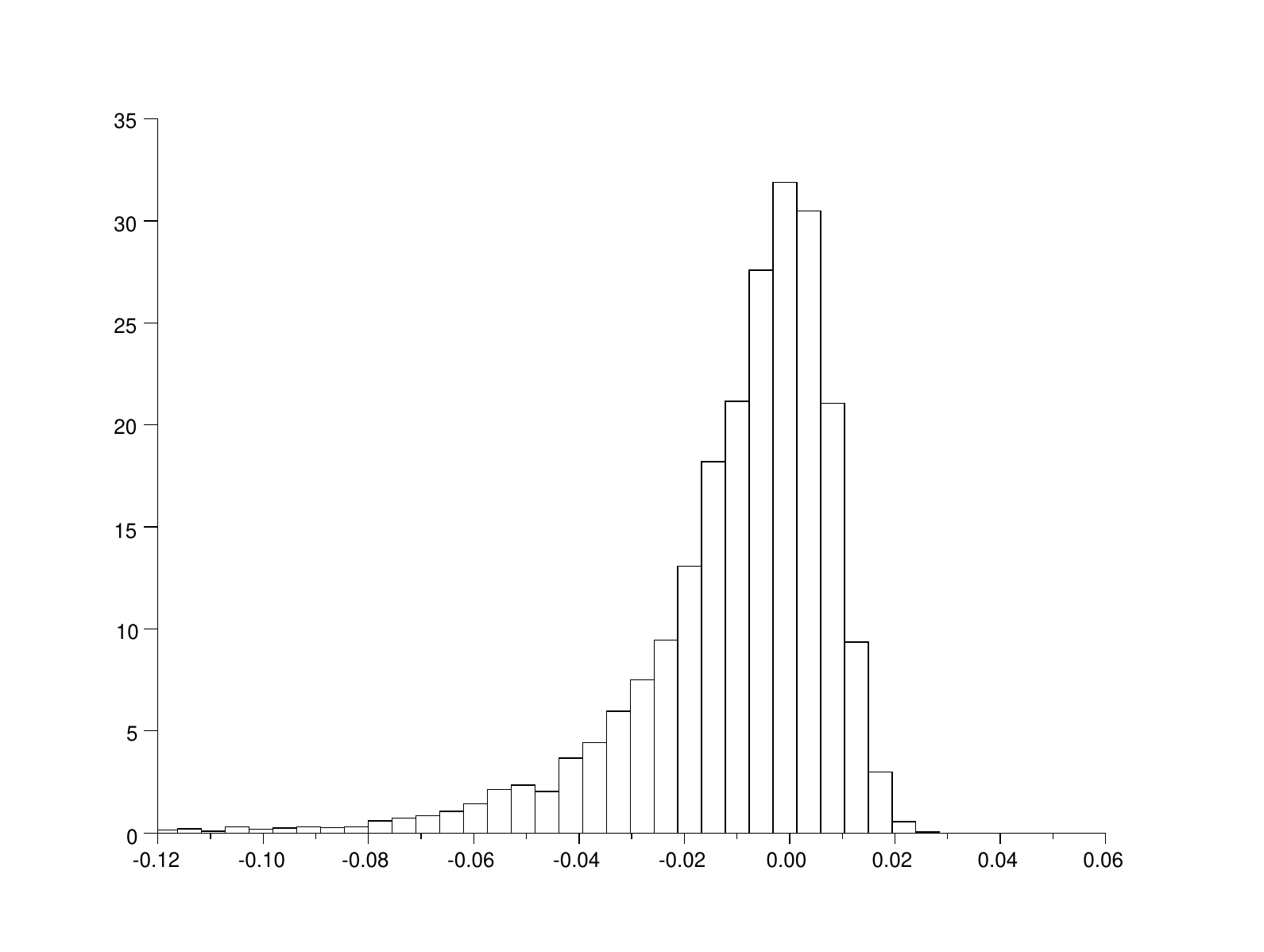}
                \caption{Limit law of $\exp(0.005T)(b-\hat{b}_T)_{2,2}$. }
        \end{subfigure}
        \begin{subfigure}[b]{0.5\textwidth}
                \includegraphics[width=\textwidth]{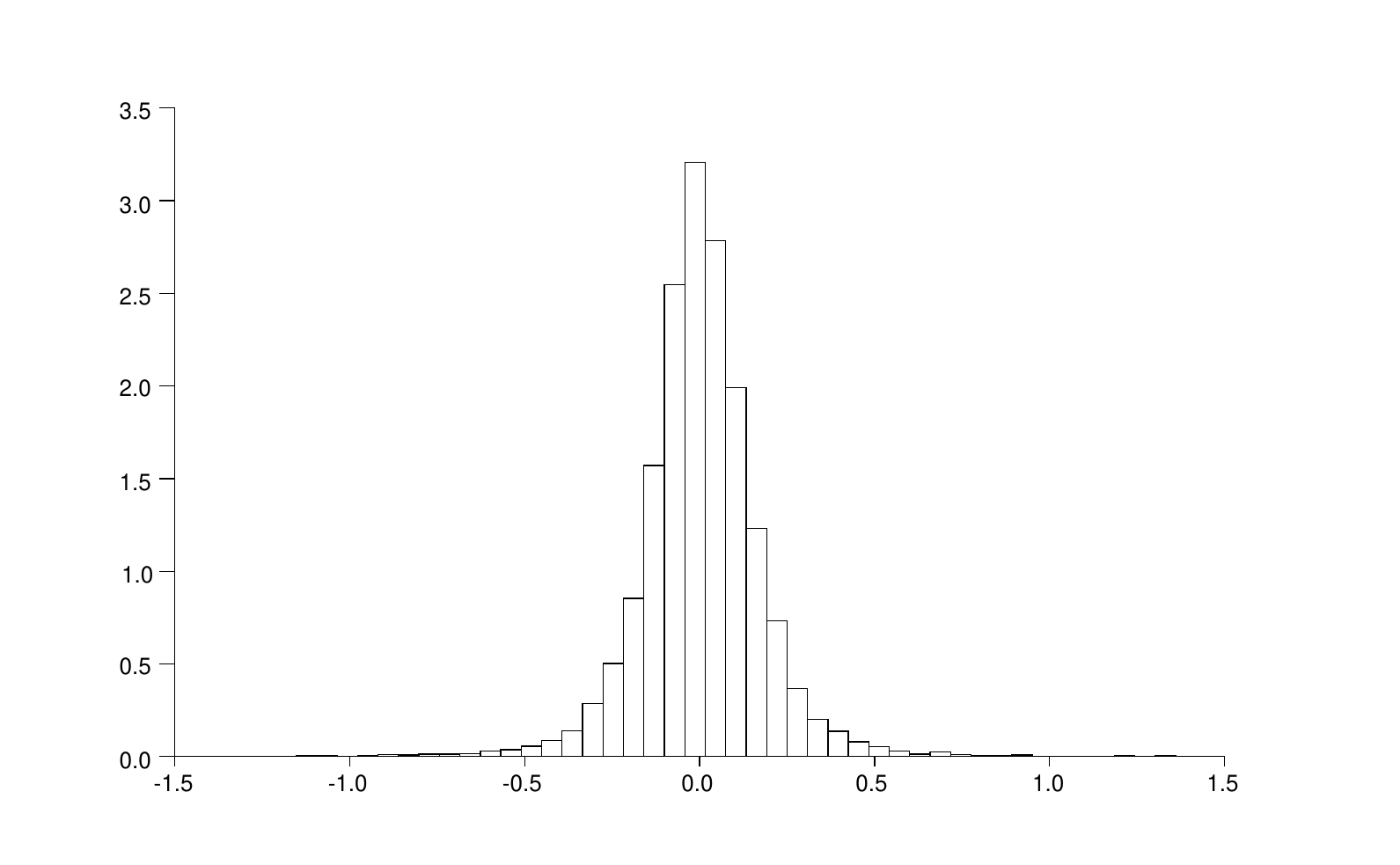}
                \caption{Limit law of $\exp(0.1T)(b-\hat{b}_T)_{1,2}$.}
        \end{subfigure}     

        \caption{Asymptotic law of the error for the estimation of $\theta=b$ with $x=$\usebox{\smlmatrix}, $T=100$, $N=10000$, $\alpha=3.5$ and $b=\diag(0.1,0.005)$.}\label{fig:b_lambdaI_presum}
\end{centering}
\end{figure}

\subsection{Estimation of the whole Wishart process}\label{est_allwish_num}
In this last part of the numerical study, we perform the estimation of all the parameters of the Wishart process~\eqref{SDE_Intro}. We consider a case where $a$ is upper triangular and $(a\tp)^{-1}b a\tp$ is symmetric. We proceed as follows. First, we sample exactly a discrete path $(X_{iT/N},0\leb i\leb N )$. Then, we estimate the matrix $a\tp a$ by using~\eqref{eq:estim_crochet_a}, where the quadratic variations are replaced by their classical approximations and the integrals are replaced by the trapezoidal rule. By a Cholesky decomposition we get then an estimator $\hat{a}$ of~$a$. Then, we use the MLE~\eqref{eq:estimator_couple} on the path $((\hat{a}\tp)^{-1} X_{iT/N}  \hat{a}\tp,0\leb i\leb N)$. This gives an estimator of $\alpha$ and $(a\tp)^{-1}b a\tp$, and therefore an estimator of $b$. As a comparison, we also calculate similarly the estimator of $\alpha$ and $b$ when $a$ is known and has not to be estimated. To draw histograms or calculate empirical expectations, we run $M=10000$ independent paths of~$X$.

We consider a sufficiently large value of~$T$ and are interested in looking at the convergence with respect to~$N$.  First, we plot the the error on the estimator of $a$ with respect to the number of time step in Log-Log scale. We observe that the convergence to zero takes place with experimental rate close to~$1/2$. This is in line with the general results on the estimation of the diffusion coefficient, see Dohnal~\cite{Dohnal} and Genon-Catalot and Jacod~\cite{GCJ}.  Then, we focus on the influence of the discretization and the unknown parameter $a$ on the convergence of the MLE of $b$ and $\alpha$. In Table~\ref{table:biais_a}, we give in function of $N$ the Mean Squared Error $\MSE(\widehat{\theta}^N \vert \theta)= \mathbb{E}[ \vert \widehat{\theta}^N- \theta \vert^2 ]$ of the estimator $\widehat{\theta}^N$, with $\theta=(b,\alpha)$. It is estimated with the empirical expectation. First, we observe that the convergence of the estimator of~$\alpha$ is roughly the same whether we know $a$ or not. This is expected since the estimation of $\alpha$ does not depend on the estimation of $a$.  Instead, the bias on $b$ is much higher when $a$ is estimated than when $a$ is known. However, it decreases also faster at an experimental order of $0.7$ while the bias when $a$ is known decreases at an experimental order of $0.45$. This latter rate is in line with the rate of $1/2$ obtained in dimension~1 by Ben Alaya and Kebaier~\cite{Alaya_Kebaier_2013}. In our case, it seems that the influence of the estimation of $a$ vanishes around $N=5000$. Last, we have plotted in Figure \ref{fig:b_pos_a_inc} the limit law of the estimator $\sqrt{T}(\widehat{\theta}^N-\theta)$ with $N=10000$. 

This short numerical study shows that the estimator obtained by discretizing the continuous time estimator is efficient in practice. Of course, it would be nice to obtain general convergence results in function of $T$ and $N$, but we leave this for  further research. 

\savebox{\smlmat}{$\left(\begin{smallmatrix} -1 & 0.2 \\
2 & -2 \end{smallmatrix}\right)$}

\savebox{\smlmatrix}{$\left(\begin{smallmatrix} 1 & 1 \\
0 & 2 \end{smallmatrix}\right)$}

\savebox{\smlmatx}{$\left(\begin{smallmatrix} 0.8 & 0.5 \\
0.5 & 1 \end{smallmatrix}\right)$}

\begin{figure}[H]
\begin{centering}  
                \includegraphics[width=0.5\textwidth]{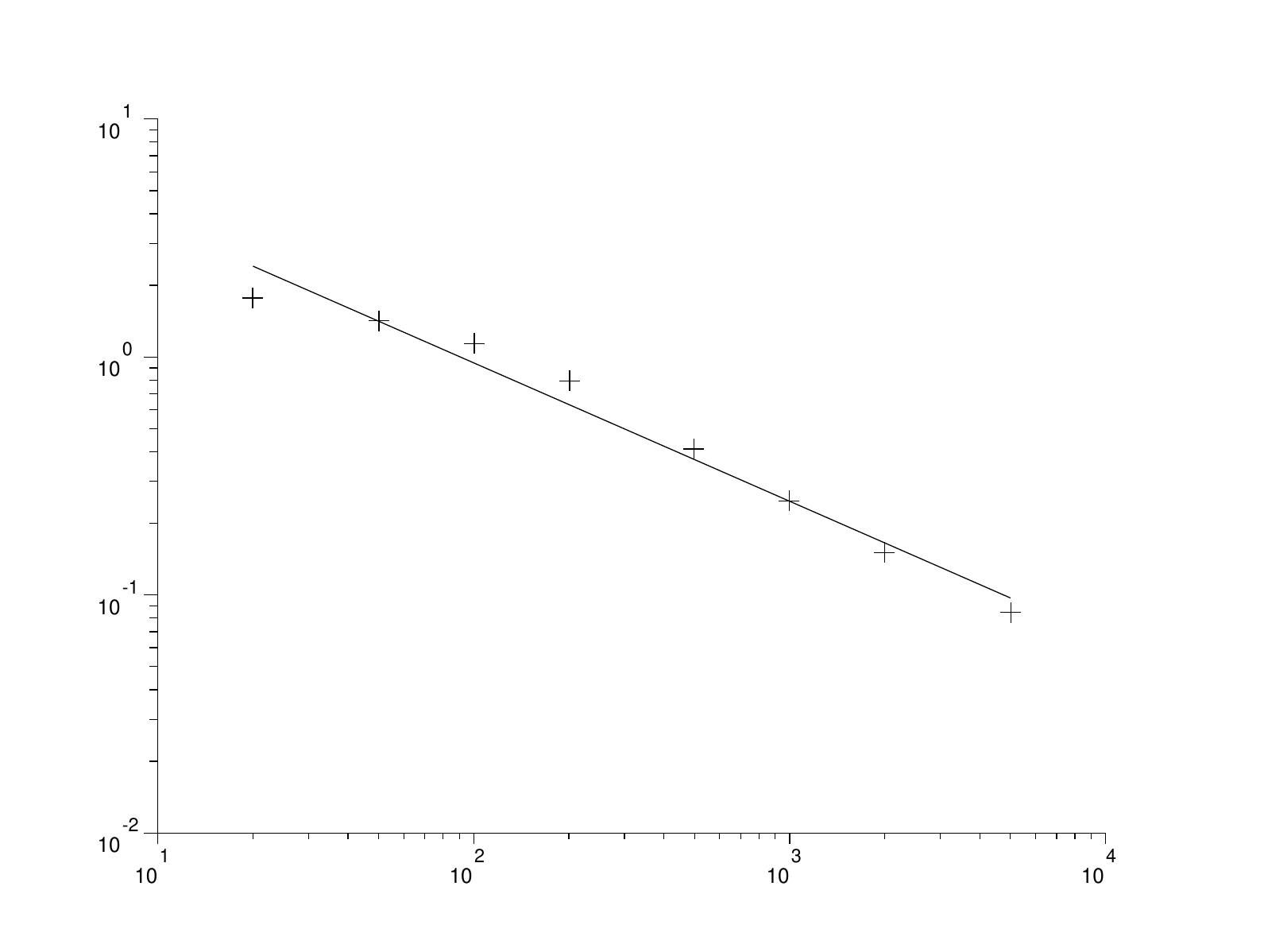}

        \caption{Log-Log representation of the empirical expectation of $\mathbb{E}[\Tr[(a-\widehat{a}^N)^2]  ]^{1/2}$ for $x=$\usebox{\smlmatx}, $T=100$, $a= $\usebox{\smlmatrix}, $\alpha=4.5$, $b=$\usebox{\smlmat}, where the line is the simple linear regression i.e. $\log(\mathbb{E}[\Tr[(a-\widehat{a}^N)^2]  ]^{1/2}) \approx 2.62-0.58 \log(N)$. }\label{fig:Log_Log_a}
\end{centering}
\end{figure}

\begin{table}
\begin{centering}
\begin{tabular}{||l|c||c|c|c|c|c|c|c|r||} 
   \hline
    \hline
    \multicolumn{2}{||c||}{Number of time steps} & 20 & 50 & 100 & 200 & 500 & 1000 & 2000 & 5000 \\
    \hline
    \hline
    \multicolumn{2}{||c||}{$\mathbb{E}[\Tr[(a-\widehat{a}^N)^2]  ]^{1/2}$} & 1.7671 &  1.4311 & 1.1487 & 0.7913 & 0.4107 & 0.2472 & 0.1514 & 0.0846 \\
    \hline
  &  $ \widehat{a}=a$  & 0.0745 & 0.0338 & 0.0181 & 0.0115 & 0.0082 & 0.0069 & 0.0061 & 0.0058 \\
    \cline{2-10} 
 $ \MSE (\widehat{b}^N_{1,1} \vert b_{1,1})$        & $ \widehat{a}=\widehat{a}^N$  & 0.7636 & 0.5266 & 0.3489 & 0.1891 & 0.0624 & 0.0273 & 0.0142 & 0.0085 \\
    \hline
    \hline
  &  $ \widehat{a}=a$  & 0.2554 & 0.1310 & 0.0664 & 0.0372 & 0.0231 & 0.0176 & 0.0153 & 0.0139\\
    \cline{2-10} 
 $\MSE (\widehat{b}^N_{2,2} \vert b_{2,2})$        & $ \widehat{a}=\widehat{a}^N$  & 3.4085 & 2.8722 & 2.1159 & 1.1995 & 0.3600 & 0.1264 & 0.0480 & 0.0201 \\
    \hline
    \hline
  &  $ \widehat{a}=a$  & 0.0075 & 0.0033 & 0.0017 & 0.0011 & 0.0008 & 0.0008 & 0.0007 & 0.0007 \\
    \cline{2-10} 
$\MSE (\widehat{b}^N_{1,2} \vert b_{1,2})$       & $ \widehat{a}=\widehat{a}^N$  & 0.0442 & 0.0568 & 0.0596 & 0.0352 & 0.0148 & 0.0075 & 0.0039 &  0.0019\\
    \hline
    \hline
  &  $ \widehat{a}=a$  & 0.8448 & 0.3579 & 0.1993 & 0.1151 & 0.0614 & 0.0416 & 0.0308 & 0.0230 \\
    \cline{2-10} 
$\MSE (\widehat{\alpha}^N \vert \alpha)$      & $ \widehat{a}=\widehat{a}^N$  & 0.8267 & 0.3496 & 0.1895 & 0.1095 & 0.0617 & 0.0410 & 0.0311 & 0.0234 \\
    \hline
    \hline
\end{tabular}
\caption{Mean Squared Error for the estimation of $\theta=(b,\alpha)$ with respect to $N$. Same parameters as Figure~\ref{fig:Log_Log_a}. 
}
\label{table:biais_a}
\end{centering}
\end{table}

\begin{figure}[H]
\begin{centering}
        \begin{subfigure}[b]{0.45\textwidth}
                \includegraphics[width=\textwidth]{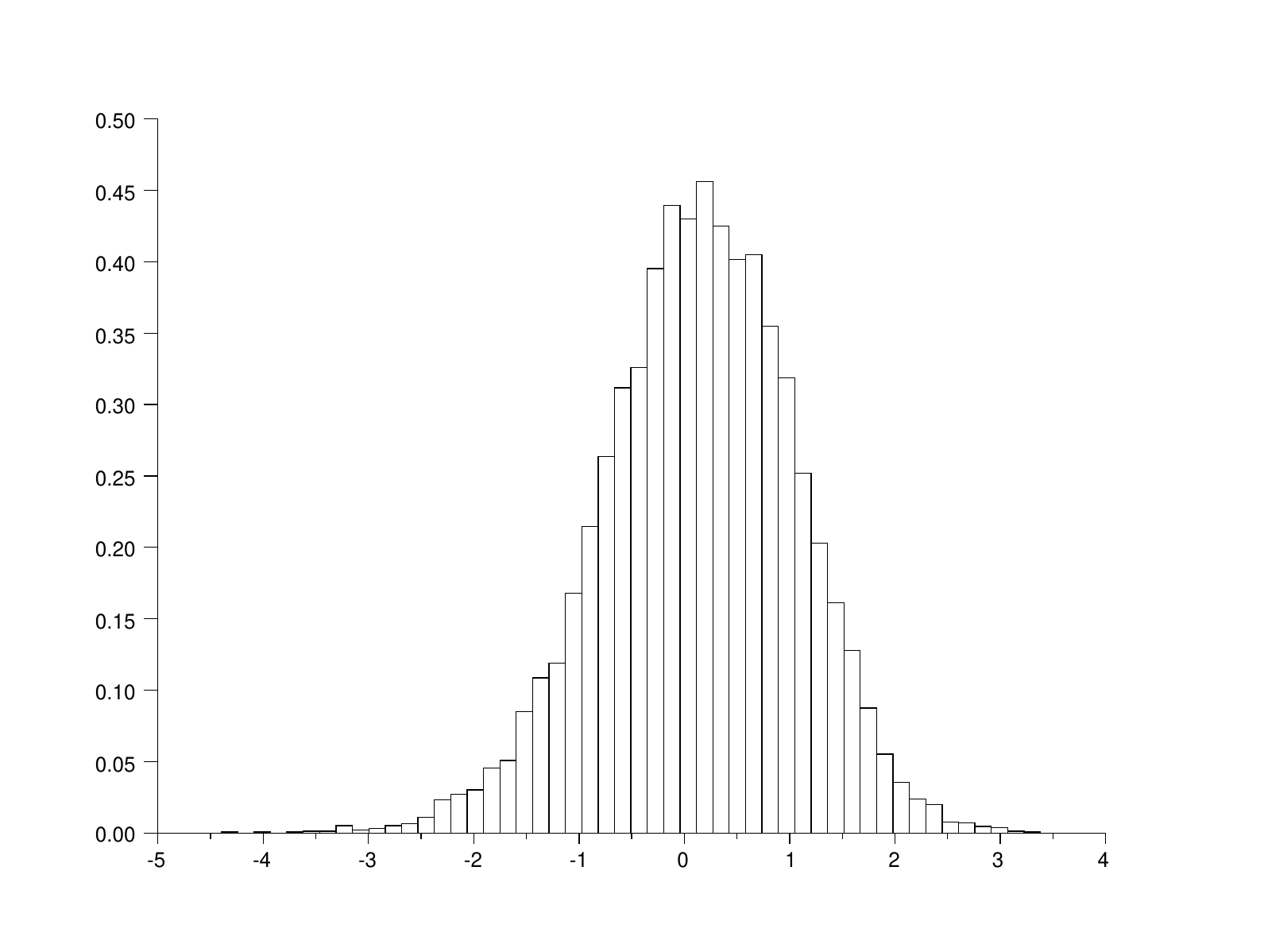}
                \caption{Limit law of $\sqrt{T}(b-\hat{b}^N_T)_{1,1}$.}
	\end{subfigure}
        \begin{subfigure}[b]{0.5\textwidth}
                \includegraphics[width=\textwidth]{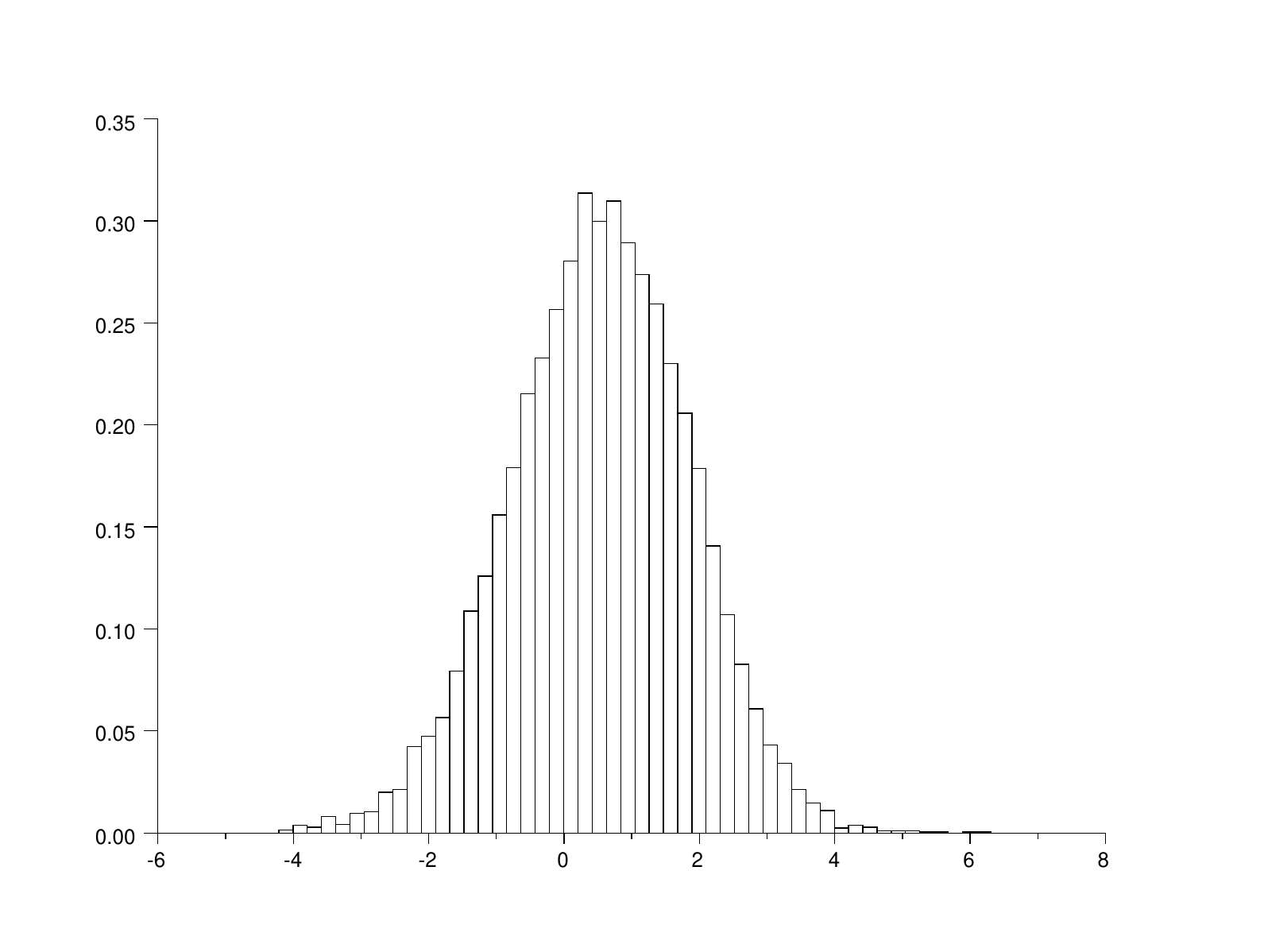}
                \caption{Limit law of $\sqrt{T}(b-\hat{b}^N_T)_{2,2}$.}
        \end{subfigure}
	\begin{subfigure}[b]{0.45\textwidth}
                \includegraphics[width=\textwidth]{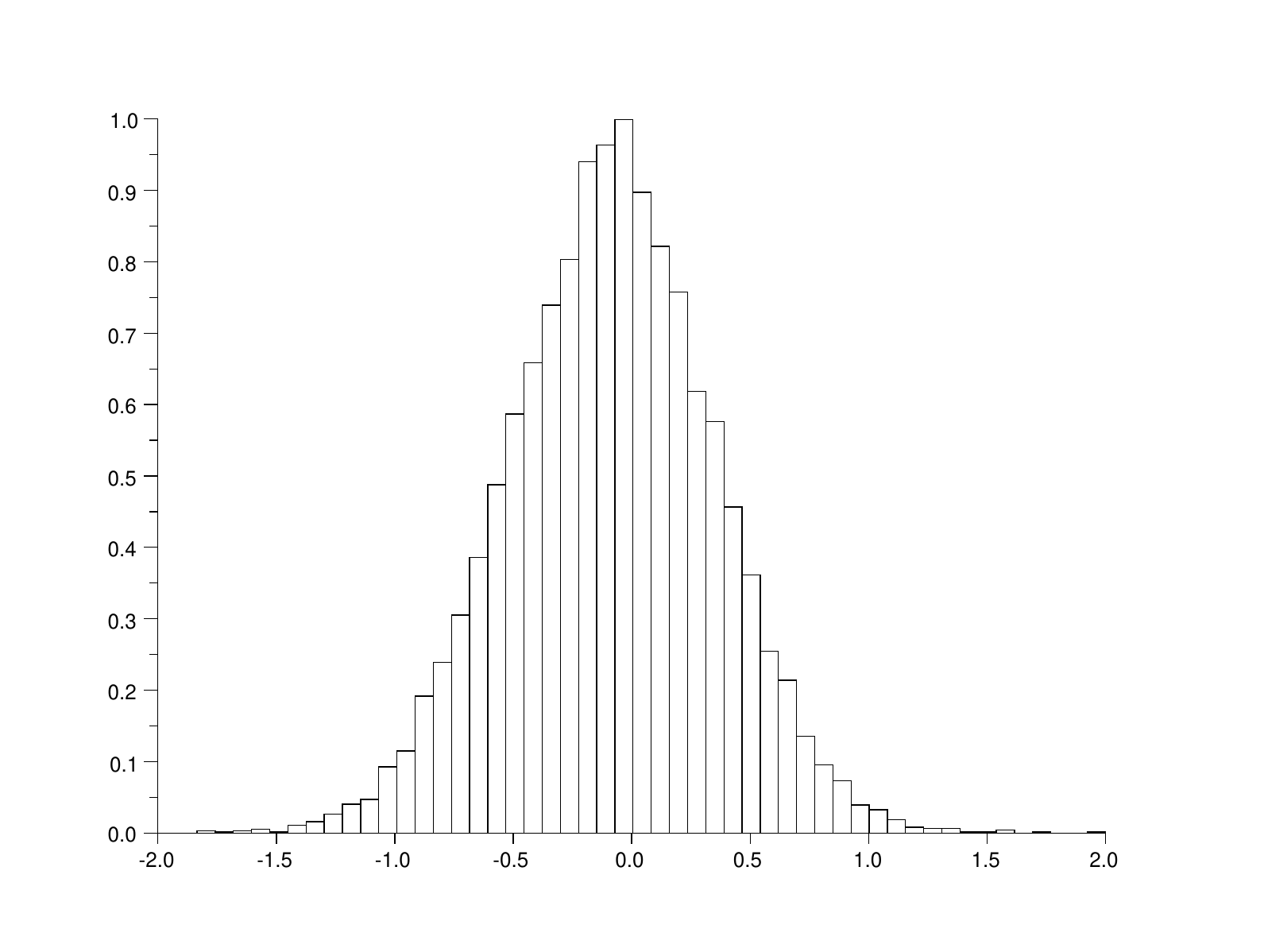}
                \caption{Limit law of $\sqrt{T}(b-\hat{b}^N_T)_{1,2}$.}
        \end{subfigure}
        \begin{subfigure}[b]{0.5\textwidth}
                \includegraphics[width=\textwidth]{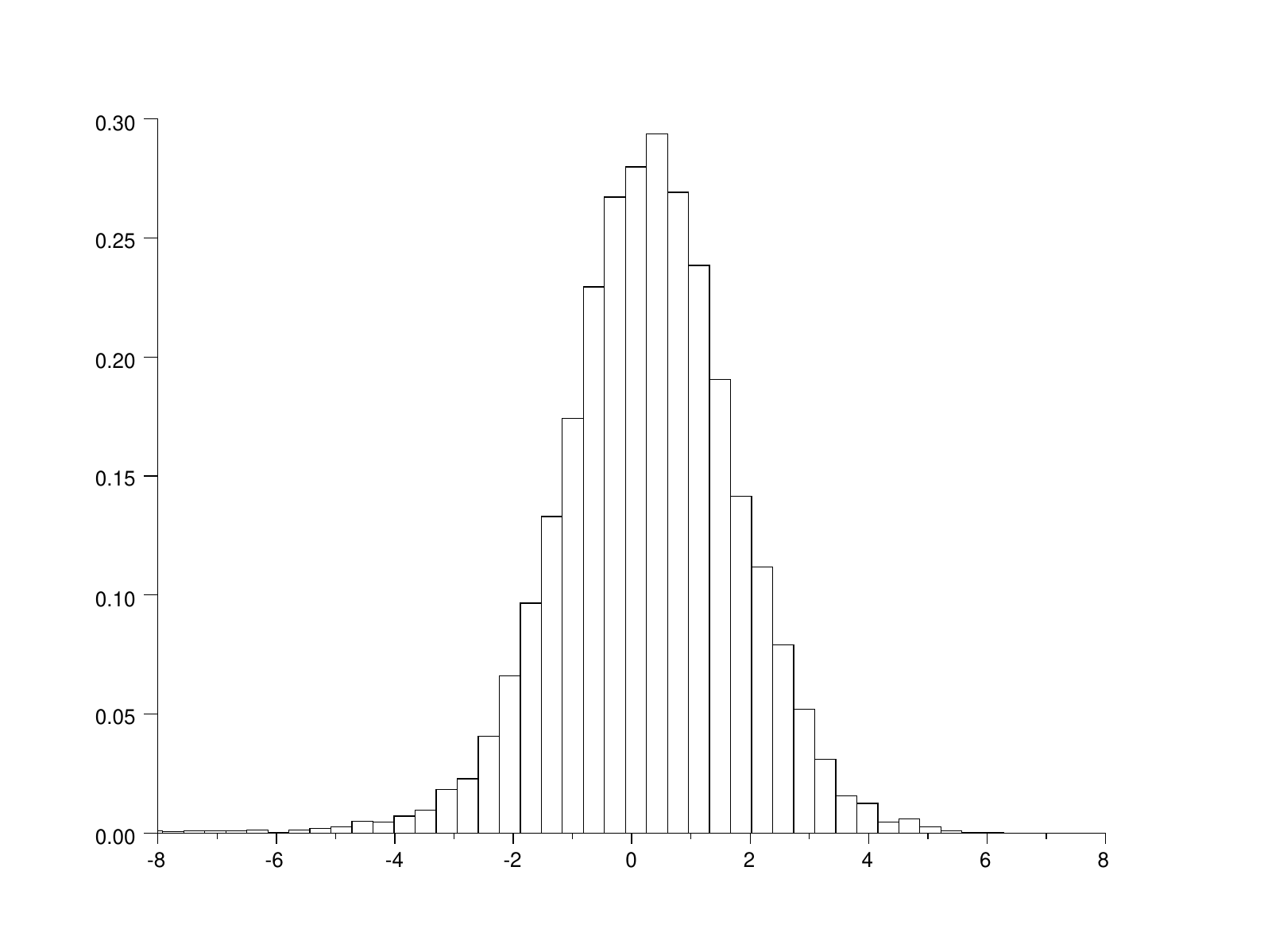}
                \caption{Limit law of $\sqrt{T}(\alpha-\hat{\alpha}^N_T)$.}
        \end{subfigure}  
        \caption{Asymptotic laws of the error for the estimation of $\theta=(b,\alpha)$ for $\widehat{a}=\widehat{a}^N$, $N=10000$, same parameters as Figure~\ref{fig:Log_Log_a}. 
        }
\label{fig:b_pos_a_inc} 
\end{centering}
\end{figure}

\newpage

\appendix 
\section{Proof of Proposition~\ref{prop_MLE_bgen}} \label{App_proof_mle_bgen}

We denote $b^s=(b+b\tp )/2$ (resp. $b^a=(b-b\tp )/2$) the symmetric (resp. antisymmetric) part of $b$. We have
\begin{align*}
\int_0^T \Tr [(\sqrt{X_s})^{-1}d W_s ] &=
\frac{1}{2}\log\left(\frac{\det[X_T]}{\det[x]} \right) - \Tr[b]T -\frac{1}{2}
\int_0^T(\alpha-1-d)\Tr[X_s^{-1}] ds,\\
\int_0^T \Tr[b^s\sqrt{X_s}dW_s]
&=    \frac{1}{2}\int_0^T \Tr[b^s(\sqrt{X_s}dW_s+dW_s\tp  \sqrt{X_s})] 
 \\
&=\frac{\Tr[b^sX_T]-{\Tr[b^sx]}}{2}-\frac{\alpha T}{2}{\Tr[b^s]}-\frac{1}{2}\int_0^T \Tr[b^s(bX_t+X_tb\tp)]dt.
\end{align*}
Thus, the only part to calculate is $\E\left[\exp(\int_0^T \Tr[-b^a\sqrt{X_s}dW_s])\bigg| \cF^X_T \right]$, and we set $M^a_t=\int_0^t\Tr[b^a\sqrt{X_s}dW_s]$. We now observe that $\langle \Tr[A_sdW_s],\Tr[B_sdW_s] \rangle=\Tr[A_sB_s\tp]ds$ and are looking for the process $\Gamma$ that takes values in~$\symm$ and minimizes
 $$\langle dM^a_t-\Tr[\Gamma_t(dX_t-(\alpha I_d+bX_t+X_t b\tp)dt)]  \rangle = \left\{ -\Tr[b^aX_tb^a]+2\Tr[\Gamma_t(X_tb^a-b^aX_t)]  +4\Tr[\Gamma_t^2X_t] \right\}dt.$$
We obtain that $2(X_tb^a-b^aX_t)+ 4(X_t \Gamma_t+ \Gamma_t X_t)=0$ and thus
$$\Gamma_t=\mathcal{L}_{X_t}^{-1}\left(\frac{1}{2} (b^aX_t-X_tb^a) \right). $$
It satisfies $\Tr[\Gamma_t(X_tb^a-b^aX_t)]=-2\Tr[\Gamma_t(\Gamma_t X_t+X_t\Gamma_t)]=-4\Tr[\Gamma_t^2 X_t]$. 
By construction, we have $\langle dM^a_t-\Tr[\Gamma_t(dX_t-(\alpha I_d+bX_t+X_t b\tp)dt)], \Tr[\tilde{\Gamma}(dX_t-(\alpha I_d+bX_t+X_t b\tp)dt)]  \rangle=0$ for any $\tilde{\Gamma} \in \symm$. Thus, there exists a Brownian motion $\beta$ independent of~$X$ such that $dM^a_t-\Tr[\Gamma_t(dX_t-(\alpha I_d+bX_t+X_t b\tp)dt)]=\sqrt{ -\Tr[b^aX_tb^a]-\Tr[\Gamma_t(b^aX_t-X_tb^a)]}d \beta_t$. In fact, both processes $(X_t,\int_0^t\sqrt{ -\Tr[b^aX_sb^a]-\Tr[\Gamma_t(b^aX_s-X_sb^a)]}d \beta_s)$ and   $$(X_t,M^a_t-\int_0^t\Tr[\Gamma_s(\sqrt{X_s} dW_s +dW_s \tp \sqrt{X_s})])$$ solve the same martingale problem for which uniqueness holds. Therefore, we have 
\begin{align*}
&\E\left[\exp\left(-\int_0^T \Tr[b^a\sqrt{X_t}dW_t]\right)\bigg| \cF^X_T \right]\\
=& \exp\left( \int_0^T\Tr\left[\Gamma_t((\alpha I_d+bX_t+X_t b\tp)dt-dX_t)\right] -  \frac{1}{2} \int_0^T\Tr[b^aX_tb^a] + \Tr[\Gamma_t(b^aX_t-X_tb^a)]dt\right) \\
=& \exp\left( - \int_0^T\Tr\left[\mathcal{L}_{X_t}^{-1}\left(\frac{1}{2} (b^aX_t-X_tb^a) \right) dX_t\right] - \int_0^T \frac{1}{2}\Tr[b^aX_tb^a]dt \right.\\
&\left. + \frac{1}{2} \int_0^T \Tr\left[\mathcal{L}_{X_t}^{-1}\left(\frac{1}{2} (b^aX_t-X_tb^a) \right) (b^aX_t-X_tb^a)\right]dt  +  \int_0^T\Tr[b^aX_tb^s]dt \right),
\end{align*}
since $\Tr\left[\mathcal{L}_{X_t}^{-1}\left(\frac{1}{2} (b^aX_t-X_tb^a) \right)\right]=\frac{1}{2}\Tr[X_t^{-1}(b^aX_t-X_tb^a)]=0$ by Lemma~\ref{lemme-linear} and $$\Tr[\Gamma_t(b^sX_t+X_tb^s)]=\Tr[b^s(\Gamma_tX_t+X_t\Gamma_t)]=\frac{1}{2}\Tr\left[b^s(b^aX_t-X_tb^a)\right]=\Tr[b^aX_tb^s].$$ 
Using~\eqref{def_likelihood} and the previous calculations, we obtain
\begin{multline*}
L_T^{\theta,\theta_0}=\exp\Bigl(\frac{\alpha-\alpha_0}{4}\log\left(\frac{\det[X_T]}{\det[x]}\right)+\frac{\Tr[b^sX_T]-{\Tr[b^sx]}}{2}-\frac{1}{2}\int_0^T \Tr[(b^s)^2X_s]ds
\\
- \int_0^T \Tr[b^aX_sb^s]ds-\frac{\alpha-\alpha_0}{4}\big( \frac{\alpha+\alpha_0}{2}-1-d \big)\int_0^T \Tr[X_s^{-1}]ds-\frac{\alpha T}{2}\Tr[b] \\ +\frac{1}{2} \int_0^T \Tr\left[\mathcal{L}_{X_t}^{-1}\left(b^aX_t-X_tb^a \right) dX_t \right]-\frac{1}{4}  \int_0^T \Tr\left[\mathcal{L}_{X_t}^{-1}\left(b^aX_t-X_tb^a \right)  (b^aX_t-X_tb^a) \right]dt  \Bigr). 
\end{multline*}
Last, we use $\mathcal{L}_{X_t}^{-1}\left(b^sX_t+X_tb^s \right)=b^s$ and $\Tr[\mathcal{L}_{X_t}^{-1}\left(b^aX_t-X_tb^a \right)(b^sX_t+X_tb^s)]=2\Tr[b^aX_tb^s]$ to obtain~\eqref{vs_gen_bgen}.

\section{Technical lemmas}

\begin{lemme}\label{lemme-linear}
For $X\in \dpos$ and $a \geqslant 0$, let ${\mathcal{L}}_{X,a}$ and $\mathcal{L}_X={\mathcal{L}}_{X,0}$ be the linear
applications defined by~\eqref{def_LXa} on $\mathcal{S}_d$. If $a\Tr[X^{-1}] \neq 1$, then ${\mathcal{L}}_{X,a}$ is invertible and we have $\Tr[\mathcal{L}^{-1}_{X,a}(Y)]=\frac{\Tr[X^{-1} Y]}{2(1-a\Tr[X^{-1}])}$. Besides,   the map $(X,Y,a)  \mapsto \mathcal{L}^{-1}_{X,a}(Y)$ is continuous on $\{(X,Y,a) \in \dpos \times \symm \times \R_+, \ a\Tr[X^{-1}] \neq 1 \}$.

\end{lemme}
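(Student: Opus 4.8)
The plan is to first establish the invertibility of $\mathcal{L}_X$ on the symmetric matrices, then reduce the inversion of $\mathcal{L}_{X,a}$ to a single scalar equation in $\Tr[Y]$, and finally read off continuity from the resulting explicit formula.

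First I would record that $\mathcal{L}_X$ is self-adjoint and positive definite for the trace scalar product on \symm: for $Y\in\symm$,
$$\Tr[\mathcal{L}_X(Y)Y]=2\Tr[XY^2]=2\Tr[(\sqrt{X}Y)(\sqrt{X}Y)\tp]=2\|\sqrt{X}Y\|^2,$$
which is nonnegative and vanishes only for $Y=0$ since $\sqrt{X}$ is invertible. Hence $\mathcal{L}_X$ is invertible on \symm. Two elementary identities will then be used repeatedly: since $\mathcal{L}_X(\tfrac12 X^{-1})=\tfrac12(XX^{-1}+X^{-1}X)=I_d$, we have $\mathcal{L}_X^{-1}(I_d)=\tfrac12 X^{-1}$; and writing $W=\mathcal{L}_X^{-1}(Y)$ so that $XW+WX=Y$, cyclicity of the trace gives $\Tr[X^{-1}Y]=\Tr[W]+\Tr[X^{-1}WX]=2\Tr[W]$, i.e. $\Tr[\mathcal{L}_X^{-1}(Y)]=\tfrac12\Tr[X^{-1}Y]$.

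Next I would invert $\mathcal{L}_{X,a}$. The equation $\mathcal{L}_{X,a}(Z)=Y$ reads $\mathcal{L}_X(Z)=Y+2a\Tr[Z]I_d$, so applying $\mathcal{L}_X^{-1}$ and the two identities above gives
$$Z=\mathcal{L}_X^{-1}(Y)+a\,\Tr[Z]\,X^{-1}.$$
Taking the trace yields $\Tr[Z]\,(1-a\Tr[X^{-1}])=\tfrac12\Tr[X^{-1}Y]$. When $a\Tr[X^{-1}]\neq1$ this determines $\Tr[Z]$ uniquely, hence $Z$ uniquely; since $\mathcal{L}_{X,a}$ is an endomorphism of the finite-dimensional space \symm, injectivity forces invertibility. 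Substituting back, I obtain the closed form
$$\mathcal{L}_{X,a}^{-1}(Y)=\mathcal{L}_X^{-1}(Y)+\frac{a\,\Tr[X^{-1}Y]}{2\,(1-a\Tr[X^{-1}])}\,X^{-1},$$
together with the announced trace formula $\Tr[\mathcal{L}_{X,a}^{-1}(Y)]=\frac{\Tr[X^{-1}Y]}{2\,(1-a\Tr[X^{-1}])}$.

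Finally, continuity follows directly from this formula. The map $X\mapsto X^{-1}$ is continuous on \dpos, and $X\mapsto\mathcal{L}_X$ is linear, hence continuous, into the space of endomorphisms of \symm, taking values in the open set of invertible operators by the first step; since operator inversion is continuous there, $(X,Y)\mapsto\mathcal{L}_X^{-1}(Y)$ is continuous, as are $\Tr[X^{-1}Y]$ and $\Tr[X^{-1}]$. On the domain $a\Tr[X^{-1}]\neq1$ the denominator stays away from zero, so the right-hand side depends continuously on $(X,Y,a)$. None of these steps presents a genuine obstacle; the only point requiring a little care is the reduction to the scalar equation, where one must check that the factor $1-a\Tr[X^{-1}]$ is exactly the quantity governing solvability, so that the hypothesis $a\Tr[X^{-1}]\neq1$ is precisely what is needed.
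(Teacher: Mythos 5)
Your proof is correct, but it takes a genuinely different route from the paper's. The paper establishes invertibility of $\mathcal{L}_{X,a}$ directly: it diagonalizes $X=O_XD_XO_X\tp$ and inspects the kernel equation entrywise in the eigenbasis, showing that any $Y\in\ker(\mathcal{L}_{X,a})$ has vanishing off-diagonal entries (in that basis) and satisfies $\Tr[Y]=a\Tr[Y]\Tr[X^{-1}]$, hence $Y=0$ when $a\Tr[X^{-1}]\neq 1$; the trace identity is then obtained separately by multiplying $Xc+cX-2a\Tr[c]I_d=Y$ on the left by $X^{-1}$ and taking traces, and continuity is asserted abstractly from the continuity of $(X,a)\mapsto\mathcal{L}_{X,a}$ and of operator inversion on the open set of invertible endomorphisms. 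You instead obtain invertibility of $\mathcal{L}_X$ from positive definiteness of the quadratic form $Y\mapsto 2\Tr[XY^2]$ (an argument the paper keeps in the separate Lemma~\ref{lemme-autoadjoint}, which it does not invoke here) and then treat the term $-2a\Tr[\cdot]I_d$ as a rank-one perturbation, reducing the inversion of $\mathcal{L}_{X,a}$ to a scalar equation for $\Tr[Z]$, in the spirit of Sherman--Morrison. This buys you something the paper's proof does not state: the explicit closed form
$$\mathcal{L}_{X,a}^{-1}(Y)=\mathcal{L}_X^{-1}(Y)+\frac{a\,\Tr[X^{-1}Y]}{2\,(1-a\Tr[X^{-1}])}\,X^{-1},$$
from which the trace formula and the continuity statement both follow at a glance, and which is consistent with the identity $\mathcal{L}^{-1}_{X,a}(I_d)=\frac{1}{2(1-a\Tr[X^{-1}])}X^{-1}$ used later in the paper. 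The paper's eigenbasis computation is more hands-on and self-contained, but it yields only invertibility and the trace of the inverse, not the inverse itself.
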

\begin{proof} 
The invertibility of ${\mathcal{L}}_{X,a}$ is equivalent to its one-to-one property.
Since $X \in \mathcal{S}_d^{+,*}$, there exists an orthogonal matrix $O_X$ and a diagonal matrix $D_X$  with positive elements such that $X=
O_X D_X O_X\tp $. We get
\begin{align}
Y\in \ker({\mathcal{L}}_{X,a}) \iff& O_X D_X O_X\tp Y + YO_X D_X O_X\tp  =2a \Tr[Y] I_d \nonumber\\
 \iff  &  D_X(O\tp _X Y O_X)=2a \Tr[Y]I_d -(O_X\tp  Y O_X)D_X . \label{cond_invers}
\end{align} 
Since $D_X$ is diagonal, we obtain  for $1\leb i,k\leb d$, $\left(  (O\tp _X Y O_X)D_X \right)_{i,k} =(O\tp _X Y O_X)_{i,k} (D_X)_{k,k} $ and   $\left(  D_X(O\tp _X Y O_X) \right)_{i,k}= (D_X)_{i,i}(O\tp _X Y O_X)_{i,k} $. 
For $k\not= i$, \eqref{cond_invers} gives $(O\tp _X Y O_X)_{i,k}=0$.
For $k=i$, we get $(O\tp _X Y O_X)_{i,i} (D_X)_{i,i}=a\Tr[Y]$ and therefore 
\begin{align*}
\Tr[Y]=\Tr[O\tp _X Y O_X]=\Tr[Y]a\sum_{i=1}^d \frac{1}{(D_X)_{i,i}}=\Tr[Y]a\Tr[X^{-1}].
\end{align*}
Since $a\Tr[X^{-1}] \not= 1$, we obtain $\Tr[Y]=0$ and then $(O\tp _X Y O_X)_{i,i}=0$, which gives $Y=0$ and the invertibility of ${\mathcal{L}}_{X,a}$. 
 Let $c=\mathcal{L}_{X,a}^{-1}(Y)$. We have  $c+X^{-1}cX-2a \Tr[c]X^{-1}=X^{-1}Y$, which gives $2(1-a\Tr[X^{-1}])\Tr[c] =\Tr[X^{-1}Y].$ Last, the continuity property is obvious since $(X,a)\mapsto \mathcal{L}_{X,a}$ is continuous and $\mathcal{L} \mapsto \mathcal{L}^{-1}$ is continuous on $\{ \mathcal{L}:\symm \rightarrow \symm \text{ linear and invertible}\}$.  
\end{proof}

\begin{lemme}\label{lemme-autoadjoint}
For $X\in \dpos$, $\mathcal{L}_X$ is self-adjoint and positive definite: $$\Tr[\mathcal{L}_X(Y)Y] \ge 2\underline{\lambda}(X)\Tr[Y^2],$$ where $\underline{\lambda}(X)>0$ is the lowest eigenvalue of~$X$. Besides, for $ a < 1/\Tr[X^{-1}]$, $\mathcal{L}_{X,a}$ is self-adjoint and positive definite.
\end{lemme}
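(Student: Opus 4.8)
The plan is to equip $\symm$ with the Hilbert--Schmidt inner product $\langle Y,Z\rangle=\Tr[YZ]$, which is symmetric and positive definite on $\symm$, and to verify the self-adjointness and positivity of $\cL_X$ and $\cL_{X,a}$ directly against this inner product. First I would treat $\cL_X$. Writing $\Tr[\cL_X(Y)Z]=\Tr[XYZ]+\Tr[YXZ]$ and invoking the cyclic invariance of the trace, one sees this expression is symmetric in $Y$ and $Z$, so $\cL_X$ is self-adjoint. Specializing $Z=Y$ and using cyclicity again collapses it to the clean identity
\begin{equation*}
\Tr[\cL_X(Y)Y]=2\Tr[XY^2].
\end{equation*}
Since $Y\in\symm$ gives $Y^2\in\posm$ and $X-\underline{\lambda}(X)I_d\in\posm$ by definition of the smallest eigenvalue, the fact~\eqref{trace_posit} that $\Tr[xy]\geb 0$ for $x,y\in\posm$ yields $\Tr[(X-\underline{\lambda}(X)I_d)Y^2]\geb 0$, i.e. $\Tr[XY^2]\geb \underline{\lambda}(X)\Tr[Y^2]$. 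This is precisely the announced bound $\Tr[\cL_X(Y)Y]\geb 2\underline{\lambda}(X)\Tr[Y^2]$, and since $\underline{\lambda}(X)>0$ for $X\in\dpos$, positive definiteness follows.

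For $\cL_{X,a}$ I would write $\cL_{X,a}(Y)=\cL_X(Y)-2a\Tr[Y]I_d$ and observe that the correction $Y\mapsto\Tr[Y]I_d$ is itself self-adjoint, as $\Tr[(\Tr[Y]I_d)Z]=\Tr[Y]\Tr[Z]$ is symmetric in $Y,Z$; hence $\cL_{X,a}$ is self-adjoint. The positivity reduces to evaluating
\begin{equation*}
\Tr[\cL_{X,a}(Y)Y]=2\Tr[XY^2]-2a\Tr[Y]^2.
\end{equation*}
The key step is to control $\Tr[Y]^2$ by $\Tr[XY^2]$. I would obtain this from Cauchy--Schwarz applied to $\Tr[Y]=\Tr\!\big[X^{-1/2}(X^{1/2}Y)\big]$, which gives $\Tr[Y]^2\leb\Tr[X^{-1}]\,\Tr[XY^2]$, using that $X^{-1/2}$ is symmetric and $\Tr[(X^{1/2}Y)\tp(X^{1/2}Y)]=\Tr[XY^2]$.

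Combining these, for $Y\neq 0$ one has $\Tr[XY^2]\geb\underline{\lambda}(X)\Tr[Y^2]>0$, and I would conclude by a short case split: if $a\leb 0$ the term $-2a\Tr[Y]^2$ is nonnegative, so $\Tr[\cL_{X,a}(Y)Y]\geb 2\Tr[XY^2]>0$; while if $0\leb a<1/\Tr[X^{-1}]$ the Cauchy--Schwarz bound gives $\Tr[\cL_{X,a}(Y)Y]\geb 2\bigl(1-a\Tr[X^{-1}]\bigr)\Tr[XY^2]>0$. In both cases $\cL_{X,a}$ is positive definite, as claimed. The only genuinely nontrivial ingredient is the Cauchy--Schwarz estimate for $\Tr[Y]^2$; everything else is routine trace manipulation together with the elementary inequality~\eqref{trace_posit}.
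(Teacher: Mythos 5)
Your proof is correct. The first half coincides exactly with the paper's: self-adjointness of $\cL_X$ by trace cyclicity, and $\Tr[\cL_X(Y)Y]=2\Tr[XY^2]\geb 2\underline{\lambda}(X)\Tr[Y^2]$ from $X-\underline{\lambda}(X)I_d\in\posm$ together with~\eqref{trace_posit}. For the positive definiteness of $\cL_{X,a}$, however, you take a genuinely different route. The paper argues indirectly: by Lemma~\ref{lemme-linear}, $\cL_{X,a}$ is invertible whenever $a\Tr[X^{-1}]\neq 1$, so the (real) eigenvalues of the self-adjoint operator $\cL_{X,a}$ cannot vanish anywhere on the interval $a<1/\Tr[X^{-1}]$; since they are positive at $a=0$ by the first part, continuity of the eigenvalues in $a$ forces them to stay positive on the whole interval. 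You instead prove coercivity directly: writing $\Tr[\cL_{X,a}(Y)Y]=2\Tr[XY^2]-2a\Tr[Y]^2$ and using the Cauchy--Schwarz bound $\Tr[Y]^2\leb\Tr[X^{-1}]\,\Tr[XY^2]$ (obtained from $\Tr[Y]=\Tr\bigl[X^{-1/2}(X^{1/2}Y)\bigr]$), you get the explicit estimate $\Tr[\cL_{X,a}(Y)Y]\geb 2\bigl(1-a\Tr[X^{-1}]\bigr)\Tr[XY^2]>0$ for $0\leb a<1/\Tr[X^{-1}]$ and $Y\neq 0$, the case $a\leb 0$ being immediate. What each approach buys: the paper's argument is shorter because Lemma~\ref{lemme-linear} is already in place, but it is purely qualitative and, as stated, Lemma~\ref{lemme-linear} only covers $a\geb 0$; your argument is self-contained (no appeal to invertibility or eigenvalue perturbation), treats $a\leb 0$ explicitly, and produces a quantitative coercivity constant --- combined with $\Tr[XY^2]\geb\underline{\lambda}(X)\Tr[Y^2]$ it gives the eigenvalue lower bound $2\bigl(1-a\Tr[X^{-1}]\bigr)\underline{\lambda}(X)$, which is precisely the kind of constant invoked abstractly as $c_{X,a}$ in Lemma~\ref{lemme-autoadjoint2}.
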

\begin{proof}
For $Y,Z \in\symm$, we have $\Tr[\mathcal{L}_X(Y)Z]=\Tr[(XY+YX)Z]=\Tr[Y(XZ+ZX)]=\Tr[Y\mathcal{L}_X(Z)]$ and $\Tr[\mathcal{L}_X(Y)Y]=2\Tr[XY^2]\ge 2 \underline{\lambda}(X)\Tr[Y^2]$ since $X-\underline{\lambda}(X)I_d\in \posm$. The self-adjoint property is then clear for $\mathcal{L}_{X,a}$, and the positive definiteness comes from Lemma~\ref{lemme-linear} and the continuity of the eigenvalues of $\mathcal{L}_{X,a}$ with respect to~$a$. 
\end{proof}

\begin{lemme}\label{lemme-autoadjoint2}
For $X\in \dpos$, $Y\in\genm$, $\bar{\cL}_X(Y)=\cL_X^{-1}(YX+XY\tp)X$ is self-adjoint and positive. The linear application  $\bar{\cL}_{X,a}(Y)=\cL_X^{-1}(YX+XY\tp)X-a\Tr[Y]I_d$ is also positive for $a<1/\Tr[X^{-1}]$, and there is a positive $c_{X,a}>0$ such that 
$$\Tr[\bar{\cL}_X(Y)\tp Y]\ge c_{X,a} \Tr[(\cL_X^{-1}(YX+XY\tp))^2].$$ 
\end{lemme}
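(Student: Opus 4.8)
The plan is to express all three assertions through the single symmetric matrix $S:=\cL_X^{-1}(YX+XY\tp)$. First I would note that $YX+XY\tp\in\symm$ (since $X\in\symm$), so by Lemma~\ref{lemme-linear} the map $\cL_X^{-1}$ applies and $S\in\symm$; moreover $\bar{\cL}_X(Y)=SX$. Having rewritten $\bar{\cL}_X(Y)=SX$, every computation below reduces to trace manipulations involving $X$ and $S$.

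For self-adjointness, I would take $Y,Z\in\genm$ and set $S=\cL_X^{-1}(YX+XY\tp)$, $T=\cL_X^{-1}(ZX+XZ\tp)$, both symmetric. The engine is the self-adjointness of $\cL_X$ on $\symm$ (Lemma~\ref{lemme-autoadjoint}), which gives $\Tr[S\,\cL_X(T)]=\Tr[\cL_X(S)\,T]$, that is $\Tr[S(ZX+XZ\tp)]=\Tr[(YX+XY\tp)T]$. Expanding each side and using cyclicity of the trace together with the symmetry of $X,S,T$, each side collapses to twice a single term, yielding $\Tr[XSZ]=\Tr[XTY]$. Since $\Tr[\bar{\cL}_X(Y)\tp Z]=\Tr[XSZ]$ and $\Tr[Y\tp\bar{\cL}_X(Z)]=\Tr[XTY]$, self-adjointness follows. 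The bookkeeping of transposes (recall $Y$ is general, not symmetric) is the one spot that genuinely requires care, and I expect it to be the main technical obstacle.

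For positivity and the quantitative bound I would specialize $Z=Y$, so $T=S$. The same identity then gives $\Tr[\bar{\cL}_X(Y)\tp Y]=\Tr[XSY]$, and $2\Tr[XSY]=\Tr[(YX+XY\tp)S]=\Tr[\cL_X(S)S]=2\Tr[XS^2]$ shows $\Tr[\bar{\cL}_X(Y)\tp Y]=\Tr[XS^2]$. Writing $\Tr[XS^2]=\Tr[(\sqrt{X}S)(\sqrt{X}S)\tp]\geb 0$ proves positivity, while $X-\underline{\lambda}(X)I_d\in\posm$ and $S^2\in\posm$ give $\Tr[XS^2]\geb\underline{\lambda}(X)\Tr[S^2]$; hence the stated bound holds with the (positive) constant $c_{X,a}=\underline{\lambda}(X)$, which is the lowest eigenvalue of $X$.

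Finally, for $\bar{\cL}_{X,a}$, the added term $Y\mapsto a\Tr[Y]I_d$ is manifestly self-adjoint, so $\bar{\cL}_{X,a}$ is self-adjoint, and $\Tr[\bar{\cL}_{X,a}(Y)\tp Y]=\Tr[XS^2]-a\Tr[Y]^2$. Here I would invoke Lemma~\ref{lemme-linear} with $a=0$ to compute $\Tr[S]=\tfrac12\Tr[X^{-1}(YX+XY\tp)]=\Tr[Y]$, turning the quadratic form into $\Tr[XS^2]-a\Tr[S]^2$. The key trick is the weighted Cauchy--Schwarz inequality: writing $\Tr[S]=\langle X^{-1/2},X^{1/2}S\rangle_F$ gives $\Tr[S]^2\leb\Tr[X^{-1}]\,\Tr[XS^2]$, so that $\Tr[\bar{\cL}_{X,a}(Y)\tp Y]\geb(1-a\Tr[X^{-1}])\Tr[XS^2]\geb 0$ whenever $a<1/\Tr[X^{-1}]$, which establishes positivity of $\bar{\cL}_{X,a}$.
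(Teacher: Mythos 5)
Your proof is correct. The self-adjointness half is essentially the paper's argument: both reduce everything to the symmetric matrix $S=\cL_X^{-1}(YX+XY\tp)$, use the symmetrization identity $\Tr[XSZ]=\tfrac12\Tr[S(ZX+XZ\tp)]$ and the self-adjointness of $\cL_X$ on \symm{} (the paper phrases it through $\cL_X^{-1}$, which is equivalent). Where you genuinely diverge is the positivity step. The paper uses $\Tr[S]=\Tr[Y]$ to rewrite the quadratic form as $\Tr[\bar{\cL}_{X,a}(Y)\tp Y]=\tfrac12\Tr[\cL_{X,a}(S)S]$ and then simply cites the positive definiteness of $\cL_{X,a}$ from Lemma~\ref{lemme-autoadjoint}, whose own proof is a soft continuity-of-eigenvalues argument and yields no explicit constant. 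You instead prove the needed positivity from scratch via the weighted Cauchy--Schwarz inequality $\Tr[S]^2\leb\Tr[X^{-1}]\Tr[XS^2]$, obtaining the quantitative chain $\Tr[\bar{\cL}_{X,a}(Y)\tp Y]=\Tr[XS^2]-a\Tr[S]^2\geb(1-a\Tr[X^{-1}])\Tr[XS^2]\geb(1-a\Tr[X^{-1}])\underline{\lambda}(X)\Tr[S^2]$; note this chain as written requires $a\geb 0$ (for $a<0$ positivity is anyway trivial since $-a\Tr[S]^2\geb 0$), which is the regime the paper implicitly works in. Your route buys two things: explicit constants (you prove the displayed inequality literally as stated, with $\bar{\cL}_X$ on the left and $c=\underline{\lambda}(X)$, and your intermediate bound also gives the $\bar{\cL}_{X,a}$ variant used later for $\hat{\cL}_T$ and $\hat{\cL}_\infty$, with $c_{X,a}=(1-a\Tr[X^{-1}])\underline{\lambda}(X)$), and self-containedness, since you effectively re-prove the positive-definiteness part of Lemma~\ref{lemme-autoadjoint} in passing rather than relying on it. The paper's version is shorter given its earlier lemmas but leaves $c_{X,a}$ implicit as the smallest eigenvalue of $\tfrac12\cL_{X,a}$.
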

\begin{proof}
Since $\cL_X^{-1}$ is self-adjoint, we have for $Z\in \genm$
\begin{align*}
\Tr[\bar{\cL}_{X}(Y)\tp Z]&=\Tr[ \cL_X^{-1}(YX+XY\tp) Z X ]=\frac{1}{2}\Tr[ \cL_X^{-1}(YX+XY\tp) (Z X+XZ\tp) ] \\
&=\frac{1}{2}\Tr[ (YX+XY\tp) \cL_X^{-1}(Z X+XZ\tp) ]=\Tr[Y \tp \bar{\cL}_X(Z)].
\end{align*}
Similarly, $\Tr[\bar{\cL}_{X,a}(Y)\tp Z]=\Tr[\bar{\cL}_{X}(Y)\tp Z]-a\Tr[Y]\Tr[Z]=\Tr[Y \tp \bar{\cL}_{X,a}(Z)]$. Besides, we notice that
\begin{align*}
\Tr[\bar{\cL}_{X,a}(Y)\tp Y]&=\frac{1}{2}\left(\Tr[ (YX+XY\tp) \cL_X^{-1}(Y X+XY\tp) ]-2a \Tr[Y]^2 \right) \\
&=\frac{1}{2}\left(\Tr[ \cL_{X,a}( \cL_X^{-1}(Y X+XY\tp) ) \cL_X^{-1}(Y X+XY\tp) ] \right) 
\end{align*}
by Lemma~\ref{lemme-linear}. This gives the claim since $\cL_{X,a}$ is positive definite by Lemma~\ref{lemme-autoadjoint}.
\end{proof}

The following lemma gives the Laplace transform of the matrix Normal distribution.

\begin{lemme}\label{lemme_matgauss}
Let $C \in \mathcal{S}_d^{+,*}$ and $\mathscr{C} [C] \in (\mathbb{R}^d)^{\otimes 4}$ defined by

\begin{align}
\label{eq:cov_matrix_normal}
\mathscr{C}[C]_{i,j,k,l}=\delta_{ik} C_{j,l}+\delta_{il} C_{j,k}+\delta_{jk} C_{i,l}+\delta_{jl} C_{i,k}  .
\end{align}

We introduce the $\mathcal{M}_d$-valued random variables $\tilde{\mathbf{G}}$ and $\mathbf{G} \sim \mathcal{N}(0, \mathscr{C}[C])$ of which components are Normal random variables with mean $0$ such that

\begin{align}
\forall i, j,k,l \in \{1, \dots ,d \}, \quad \mathbb{E}[\tilde{\mathbf{G}}_{i,j} \tilde{\mathbf{G}}_{k,l}] =\delta_{ik} \delta_{jl}, \quad \mathbb{E}[\mathbf{G}_{i,j} \mathbf{G}_{k,l}] =\mathscr{C}[C]_{i,j,k,l}.
\end{align} 
We have the following results.

\begin{enumerate}
\item For all $c \in \mathcal{S}_d$,  $\mathbb{E}\big[ \exp(-\Tr[c \mathbf{G} ]) \big]=\exp(2 \Tr[c^2 C])$.

\item For $\tilde{C} \in \genm$ such that $\tilde{C}\tilde{C}\tp =C$, $\tilde{C}\tilde{\mathbf{G}}+\tilde{\mathbf{G}}\tp \tilde{C}\tp $ and $\mathbf{G}$ have the same law.

\item Let $X\in \dpos$. For $c\in \symm$,  $\E[\exp(- \Tr[c \mathcal{L}_X^{-1}(\sqrt{X} \tilde{\mathbf{G}}+\tilde{\mathbf{G}}\tp \sqrt{X})])]=\E[\exp(\Tr[c\mathcal{L}_X^{-1}(c)])$
\end{enumerate}

\label{lemme:Lap_Normal_R2}
\end{lemme}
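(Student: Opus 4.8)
The plan is to prove the three assertions in the order (2), (1), (3), because the explicit Gaussian representation in~(2) makes the other two short. All three statements concern centered Gaussian objects, so in each case it will be enough to identify a covariance and then invoke the scalar formula $\mathbb{E}[\exp(-Z)]=\exp(\sigma^2/2)$ for $Z\sim\mathcal{N}(0,\sigma^2)$.

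For~(2), I would set $\mathbf{H}=\tilde{C}\tilde{\mathbf{G}}+\tilde{\mathbf{G}}\tp\tilde{C}\tp$ and note that $\mathbf{H}$ is symmetric with entries that are centered linear combinations of the independent standard normals $\tilde{\mathbf{G}}_{k,l}$, hence centered and jointly Gaussian. It therefore suffices to match its covariance tensor with that of $\mathbf{G}$. Writing $\mathbf{H}_{i,j}=\sum_k(\tilde{C}_{i,k}\tilde{\mathbf{G}}_{k,j}+\tilde{C}_{j,k}\tilde{\mathbf{G}}_{k,i})$ and expanding $\mathbb{E}[\mathbf{H}_{i,j}\mathbf{H}_{p,q}]$ with $\mathbb{E}[\tilde{\mathbf{G}}_{a,b}\tilde{\mathbf{G}}_{c,d}]=\delta_{ac}\delta_{bd}$, the four resulting double sums collapse via $\tilde{C}\tilde{C}\tp=C$ to $\delta_{jq}C_{i,p}+\delta_{jp}C_{i,q}+\delta_{iq}C_{j,p}+\delta_{ip}C_{j,q}$, which is exactly $\mathscr{C}[C]_{i,j,p,q}$ as defined in~\eqref{eq:cov_matrix_normal}. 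Equality of covariances of centered Gaussian vectors then gives the equality in law.

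For~(1), I would apply~(2) with an arbitrary square root $\tilde{C}$ of $C$ and use the cyclicity of the trace together with $c\tp=c$ to get $\Tr[c\mathbf{G}]\underset{law}{=}2\Tr[c\tilde{C}\tilde{\mathbf{G}}]$. Since $\Tr[c\tilde{C}\tilde{\mathbf{G}}]=\sum_{i,j}(c\tilde{C})_{i,j}\tilde{\mathbf{G}}_{j,i}$ is a centered scalar Gaussian of variance $\Tr[c\tilde{C}\tilde{C}\tp c\tp]=\Tr[c^2C]$, we obtain $\Tr[c\mathbf{G}]\sim\mathcal{N}(0,4\Tr[c^2C])$ and hence $\mathbb{E}[\exp(-\Tr[c\mathbf{G}])]=\exp(2\Tr[c^2C])$. (Alternatively one may read off $\mathrm{Var}(\Tr[c\mathbf{G}])$ directly from $\mathscr{C}[C]$; the four terms each contribute $\Tr[c^2C]$ by symmetry of $c$ and $C$.) For~(3), I would take $\tilde{C}=\sqrt{X}$, so $C=X$ and, by~(2), $\sqrt{X}\tilde{\mathbf{G}}+\tilde{\mathbf{G}}\tp\sqrt{X}\underset{law}{=}\mathbf{G}\sim\mathcal{N}(0,\mathscr{C}[X])$. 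Setting $c'=\mathcal{L}_X^{-1}(c)\in\symm$ and using that $\mathcal{L}_X^{-1}$ is self-adjoint (Lemma~\ref{lemme-autoadjoint}), I get $\Tr[c\,\mathcal{L}_X^{-1}(\sqrt{X}\tilde{\mathbf{G}}+\tilde{\mathbf{G}}\tp\sqrt{X})]=\Tr[c'(\sqrt{X}\tilde{\mathbf{G}}+\tilde{\mathbf{G}}\tp\sqrt{X})]\underset{law}{=}\Tr[c'\mathbf{G}]$, so part~(1) yields $\exp(2\Tr[c'^2X])$. Finally, from $c=Xc'+c'X$ and trace cyclicity, $\Tr[c\,\mathcal{L}_X^{-1}(c)]=\Tr[cc']=\Tr[(Xc'+c'X)c']=2\Tr[Xc'^2]$, which matches the exponent.

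The computations are elementary and the only delicate point is the index bookkeeping in~(2): one must carefully keep the two summands of $\mathbf{H}_{i,j}$ and the two of $\mathbf{H}_{p,q}$ separate so that each of the four Kronecker-delta contractions lands on the correct entry of $C$. Everything else reduces to the trace cyclicity identities and the scalar Gaussian Laplace transform.
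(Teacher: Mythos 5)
Your proof is correct, and at its core it rests on the same elementary reductions as the paper's proof: everything comes down to the scalar Gaussian Laplace transform, the identity $\Tr[c(\tilde{C}\tilde{\mathbf{G}}+\tilde{\mathbf{G}}\tp\tilde{C}\tp)]=2\Tr[c\tilde{C}\tilde{\mathbf{G}}]$ whose variance computation yields $\Tr[c^2C]$, and, for point~(3), the substitution $\tilde{c}=\mathcal{L}_X^{-1}(c)$ (equivalently, self-adjointness of $\mathcal{L}_X^{-1}$) together with $\Tr[c\,\mathcal{L}_X^{-1}(c)]=2\Tr[X(\mathcal{L}_X^{-1}(c))^2]$. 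The one structural difference is the treatment of point~(2) and the logical order: the paper proves (1) first and then obtains (2) by checking that the scalar projections $\Tr[c\,\cdot\,]$ for $c\in\symm$ have the same Gaussian law for both matrices, an argument that implicitly relies on both random matrices being almost surely symmetric so that symmetric test matrices suffice (a Cram\'er--Wold reduction the paper does not spell out); you instead prove (2) first by computing the full entrywise covariance tensor of $\tilde{C}\tilde{\mathbf{G}}+\tilde{\mathbf{G}}\tp\tilde{C}\tp$ and matching it with $\mathscr{C}[C]$, and then deduce (1) from the representation. Your route costs a little more index bookkeeping (which you carry out correctly) but is self-contained and settles the equality in law in $\genm$ without the implicit restriction to symmetric test matrices; the paper's route is terser because it recycles the variance computation already done for point~(1). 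Both arguments are valid and lead to the same exponents.
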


\begin{proof}
We focus on the first point. For all $c \in \mathcal{S}_d$, we have

\begin{align*}
\mathbb{E}\big[ \exp(-\Tr[c \mathbf{G} ]) \big]=\mathbb{E}\big[ \exp( -\sum_{1 \leqslant i,j \leqslant d}c_{i,j}\mathbf{G}_{i,j}) \big].
\end{align*}
Moreover, $\sum_{1 \leqslant i,j \leqslant d}c_{i,j}\mathbf{G}_{i,j}$ is a Normal random variable and its variance is given by
\begin{align*}
\mathbb{E}\big[ \big( \sum_{1 \leqslant i,j \leqslant d}c_{i,j}\mathbf{G}_{i,j} \big)^2 \big] =& \sum_{1 \leqslant i,j,k,l \leqslant d}c_{i,j}c_{k,l}\mathscr{C}[C]_{i,j,k,l}  = 4\Tr[c^2 C].
\end{align*}
It follows from the moment generating function of the Normal distribution that
\begin{align*}
\mathbb{E}\big[ \exp(-\Tr[c \mathbf{G} ]) \big]=\exp(2\Tr[c^2 C]).
\end{align*}

To prove the second point it is sufficient to notice that $\Tr[c( \tilde{C}\tilde{\mathbf{G}}+\tilde{\mathbf{G}}\tp \tilde{C}\tp )]= \Tr[2c \tilde{C}\tilde{\mathbf{G}}]$ and 
\begin{align*}
\mathbb{E}\big[ \big( \sum_{1 \leqslant i,j \leqslant d}(c \tilde{C})_{i,j}\tilde{\mathbf{G}}_{i,j} \big)^2 \big] =& \sum_{1 \leqslant i,j,k,l \leqslant d}(c \tilde{C})_{i,j}(c \tilde{C} )_{k,l}\delta_{ik}\delta_{jl} =\sum_{1 \leqslant i,j \leqslant d}(c \tilde{C})_{i,j}^2=\Tr[c \tilde{C} \tilde{C}\tp c] .
\end{align*}

For the third point, we set $Z=\mathcal{L}_X^{-1}(\sqrt{X} \tilde{\mathbf{G}}+\tilde{\mathbf{G}}\tp \sqrt{X} )$ and have $XZ+ZX=\sqrt{X} \tilde{\mathbf{G}}+\tilde{\mathbf{G}} \tp \sqrt{X}$. We also introduce $\tilde{c}=\mathcal{L}_X^{-1}(c)$ and have $\tilde{c}X+X \tilde{c}=c$. Thus, we obtain
\begin{align*}
\Tr[cZ]&=\Tr[(\tilde{c}X+X \tilde{c})Z] =\Tr[\tilde{c}(\sqrt{X} \tilde{\mathbf{G}}+\tilde{\mathbf{G}} \tp \sqrt{X})]
\end{align*}
and therefore $\E[\exp(-\Tr[cZ])]=\exp(2\Tr[\tilde{c}^2X])=\exp(\Tr[\tilde{c}(\tilde{c}X+X\tilde{c})])=\exp(\Tr[\tilde{c}c])$.

\end{proof}

\section{Some asymptotic behaviour of Wishart processes}

\begin{lemme}\label{lem_ergo} Let $X\sim WIS_d(x,\alpha,b,I_d)$ with $b\in \symm$, $x\in \posm$ and $\alpha \geb d-1$. Then $X_T$ converges in law when $T\rightarrow +\infty$ if and only if $-b \in \dpos$. In this case, $X_T$ converges in law to $WIS_d(0,\alpha,0,\sqrt{-b^{-1}};1/2)$.

 Let $X\sim WIS_d(x,\alpha,b,I_d)$ with $b\in \genm$, $x\in \posm$ and $\alpha \geb d-1$. If $-(b+b\tp)\in \dpos$, $q_\infty:=\int_0^\infty e^{sb}e^{sb\tp}ds$ is well defined and $X_T$ converges in law to $WIS_d(0,\alpha,0,\sqrt{2q_\infty};1/2)$.
\end{lemme}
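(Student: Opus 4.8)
The plan is to treat the symmetric and the general case separately, using Proposition~\ref{prop:lap_Wish} for the former and the affine (noncentral-Wishart) transform formula for the latter, and in both cases to argue convergence in law through convergence of Laplace transforms on $\posm$.

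For the symmetric case I would first prove sufficiency and identify the limit. Specialize Proposition~\ref{prop:lap_Wish} to $v=0$, so only the law of $X_T$ is involved; then $\tilde v=b^2$ and $\tilde w=w-b$. Assuming $-b\in\dpos$, the matrix $b$ is invertible and $\sqrt{\tilde v}=\sqrt{b^2}=-b\in\dpos$, so $V_{0,w}(T)=(-b)^{-1}\sinh(-bT)(w-b)+\cosh(-bT)$. Writing $\sinh,\cosh$ with exponentials and using that $e^{bT}\to0$ while $e^{-bT}$ blows up (all eigenvalues of $b$ are negative), the leading behaviour is $V_{0,w}(T)\sim\tfrac12 e^{-bT}((-b)^{-1}w+2I_d)$ and $V'_{0,w}(T)\sim\tfrac12 e^{-bT}(w-2b)$. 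Since $(-b)((-b)^{-1}w+2I_d)=w-2b$ and $(-b)^{-1}w+2I_d$ has eigenvalues $\geb 2$, these give $V'_{0,w}(T)V_{0,w}(T)^{-1}\to -b$, so the noncentrality factor $\exp(-\tfrac12\Tr[(V'_{0,w}(T)V_{0,w}(T)^{-1}+b)x])\to1$; and using $\det[e^{-bT}]=e^{-T\Tr[b]}$ the prefactor $e^{-\frac{\alpha}{2}\Tr[b]T}/\det[V_{0,w}(T)]^{\alpha/2}$ converges to $\det[I_d-\tfrac12 b^{-1}w]^{-\alpha/2}$. Applying Proposition~\ref{prop:lap_Wish} once more with $b=0$, $x=0$ at time $1/2$, together with the scaling $WIS_d(0,\alpha,0,a;t)\overset{law}{=}a\tp WIS_d(0,\alpha,0,I_d;t)a$ and $a\tp a=-b^{-1}$ for $a=\sqrt{-b^{-1}}$, identifies $\det[I_d-\tfrac12 b^{-1}w]^{-\alpha/2}$ as the Laplace transform of $WIS_d(0,\alpha,0,\sqrt{-b^{-1}};1/2)$. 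Convergence of the Laplace transform to a limit continuous at $w=0$ then yields convergence in law.

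For necessity (still $b$ symmetric), I would conjugate by an orthogonal $O$ with $O\tp bO=\diag(\lambda_1,\dots,\lambda_d)$. Then $Y=O\tp X O\sim WIS_d(O\tp xO,\alpha,\diag(\lambda_i),I_d)$, and by the bracket formula~\eqref{eq:crochet_general_de_X} each diagonal entry solves $d(Y_t)_{ii}=(\alpha+2\lambda_i(Y_t)_{ii})dt+2\sqrt{(Y_t)_{ii}}\,d\beta^i_t$, a CIR process. If some $\lambda_i\geb0$, this CIR diverges to $+\infty$ (mean $\sim e^{2\lambda_i T}$ when $\lambda_i>0$, squared Bessel of dimension $\alpha$ when $\lambda_i=0$), so $(Y_T)_{ii}$ does not converge in law; as $(Y_T)_{ii}$ is a continuous image of $X_T$, $X_T$ cannot converge either. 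Hence convergence forces $\lambda_i<0$ for all $i$, i.e. $-b\in\dpos$.

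For the general case $b\in\genm$, Proposition~\ref{prop:lap_Wish} no longer applies since its closed form relies on $b=b\tp$. I would redo its martingale derivation with $v=0$: the Riccati equation for $\gamma$ becomes $\gamma'=b\tp\gamma+\gamma b+2\gamma^2$, $\gamma(0)=-w/2$, with $\beta'=\alpha\Tr[\gamma]$, the true-martingale truncation argument of the proposition carrying over verbatim. Linearizing this Riccati flow gives the affine transform formula
\[
\E[\exp(-\tfrac12\Tr[wX_T])]=\det[I_d+w\sigma_T]^{-\alpha/2}\exp\Big(-\tfrac12\Tr[w(I_d+w\sigma_T)^{-1}e^{Tb}xe^{Tb\tp}]\Big),
\]
with $\sigma_T=\int_0^T e^{sb}e^{sb\tp}ds$; a quick check is that differentiating at $w=0$ returns the mean $\alpha\sigma_T+e^{Tb}xe^{Tb\tp}$, the solution of $\dot m=\alpha I_d+bm+mb\tp$. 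Under $-(b+b\tp)\in\dpos$, a Lyapunov estimate shows $\|e^{sb}\|$ decays exponentially, so $q_\infty=\int_0^\infty e^{sb}e^{sb\tp}ds$ is well defined, $\sigma_T\to q_\infty$, and $e^{Tb}xe^{Tb\tp}\to0$; hence the Laplace transform converges to $\det[I_d+wq_\infty]^{-\alpha/2}$, which by the same scaling computation is the transform of $WIS_d(0,\alpha,0,\sqrt{2q_\infty};1/2)$ (that law has dispersion $\int_0^{1/2}(\sqrt{2q_\infty})\tp\sqrt{2q_\infty}\,ds=q_\infty$ and zero noncentrality). The hard part will be this general case: I must establish the noncentral-Wishart formula for nonsymmetric $b$ on my own, which requires both solving the matrix Riccati equation $\gamma'=b\tp\gamma+\gamma b+2\gamma^2$ by linearization (reading off the dependence on $\sigma_T$ and $e^{Tb}xe^{Tb\tp}$) and justifying the exponential local martingale is a true martingale via the truncation/Girsanov argument of Proposition~\ref{prop:lap_Wish}. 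For integer $\alpha$ one can bypass this through the Ornstein--Uhlenbeck representation $X_t=\sum_{k=1}^{\alpha}U^k_t(U^k_t)\tp$ with i.i.d. $dU^k=bU^k\,dt+dB^k$ whose stationary law is $\mathcal N(0,q_\infty)$; extending that route to all $\alpha\geb d-1$ would then be the remaining difficulty.
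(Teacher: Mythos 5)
Your proposal is correct in substance, but it follows a genuinely different (and longer) route than the paper, whose proof is essentially a citation: for \emph{both} the symmetric and the general case, the paper invokes Proposition~4 of Ahdida et al.~\cite{AAP}, which gives the Laplace transform of $X_T$ for arbitrary drift $b\in\genm$, namely $\E[\exp(-\Tr[vX_T])]=\det[I_d+2\sigma_T v]^{-\alpha/2}\exp\big(-\Tr\big[v(I_d+2\sigma_T v)^{-1}e^{Tb}xe^{Tb\tp}\big]\big)$ with $\sigma_T=\int_0^Te^{sb}e^{sb\tp}ds$, and then lets $T\to\infty$ exactly as you do. Three points of comparison. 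First, in the symmetric case you instead use the paper's own Proposition~\ref{prop:lap_Wish} and work out the asymptotics of $V_{0,w}(T)$ and $V'_{0,w}(T)$; your computation ($V'_{0,w}(T)V_{0,w}(T)^{-1}+b\to 0$, prefactor $\to\det[I_d-\tfrac12 b^{-1}w]^{-\alpha/2}$) and the identification of the limit via the scaling $WIS_d(0,\alpha,0,a;t)\overset{law}{=}a\tp\, WIS_d(0,\alpha,0,I_d;t)\,a$ are correct, and this buys a self-contained argument at the cost of some algebra. Second, for necessity the paper picks an eigenvector $v$ of $b$ with eigenvalue $\lambda\geb 0$ and shows $\E[v\tp X_T v]\to+\infty$; your argument via the CIR marginal processes $(O\tp X_t O)_{ii}$ is equally valid and in fact slightly more conclusive, since divergence of a mean does not by itself preclude convergence in law, whereas the known pathwise/scaling behaviour of CIR and squared Bessel processes (exponential growth when $\lambda_i>0$; $Y_t/t$ converging to a nondegenerate law when $\lambda_i=0$ and $\alpha>0$) does. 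Third, the part you single out as ``the hard part'' --- the noncentral-Wishart Laplace transform for nonsymmetric $b$ --- is precisely the statement of Proposition~4 in~\cite{AAP}, so it can be closed by citation rather than re-derived; as written, your treatment of the general case is only a sketch (the Riccati equation $\gamma'=b\tp\gamma+\gamma b+2\gamma^2$ is indeed the right one, and the linearization plus the truncation/martingale argument would go through), and the extension beyond integer $\alpha$ via the Ornstein--Uhlenbeck representation is explicitly left open, so this is the one place where your proof is incomplete as it stands, though not wrong in approach.
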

\begin{proof}
Let us first consider the case $-b \in \dpos$. From  Proposition~4 in~\cite{AAP}, we have for $v\in \posm$,
\begin{align*}
  \E[\exp(-\Tr[vX_T])]&=\frac{\exp\left(\Tr\left[-v \left(I_d+2\left(\int_0^T e^{2bs}ds \right) v \right)^{-1}  e^{Tb}x e^{Tb}\right]\right)}{\det \left[I_d+2\left(\int_0^T e^{2bs}ds \right)v \right]^{\alpha/2}}\\
&\underset{T\rightarrow + \infty} \rightarrow\frac{1}{\det \left[I_d-b ^{-1} v  \right]^{\alpha/2}},
\end{align*}
which is the Laplace transform of $WIS_d(0,\alpha,0,\sqrt{-b^{-1}};1/2)$. Now, let us consider $-b \not \in \dpos$. Then, there exists an eigenvector  $v\in \R^d\setminus \{0\}$ such that $bv=\lambda v$ with $\lambda\geb 0$. Then, we have $\frac{d}{dt} \E[v\tp X_tv]=\alpha v\tp v +2\lambda  \E[v\tp X_tv] $, and therefore $  \E[v\tp X_Tv]\underset{T\rightarrow + \infty} \rightarrow + \infty$. 

In the case $b\in \genm$ with $-(b+b\tp)\in \dpos$, we know that the norm of $e^{bs}$ decays exponentially to $0$ as $s\rightarrow + \infty$, see e.g. Problem 11.3.6 in Golub and Van Loan~\cite{Golub}. Using again Proposition~4 in~\cite{AAP}, we get that $\E[\exp(-\Tr[vX_T])]\underset{T\rightarrow + \infty} \rightarrow\frac{1}{\det \left[I_d+ 2q_\infty v  \right]^{\alpha/2}}$.
\end{proof}

\begin{lemme}\label{lem_yor} $\bullet$ Assume $\alpha>d+1$ and $b=0$. Then, $\frac{Q_T^{-1}}{d \log(T)} \underset{T\rightarrow +\infty}{\rightarrow} \frac{1}{\alpha -(d+1)}$ a.s. Besides,  $\frac{Z_T}{\log(T)}$ converges almost surely to $d$, and we have
\begin{equation} \forall \mu>0,  \sup_{T\geb 2} \E\left[ \exp\left(  \frac{\mu }{ \sqrt{\log(T)}} N_T \right) \right] < \infty.\label{ui1}
\end{equation}

$\bullet$ Assume $\alpha=d+1$ and $b=0$. Then, as $T\rightarrow + \infty$, $\left(\frac{2}{d \log(T)} \right)^2 Q_T^{-1}$ converges in law to $\tau_1=\inf\{ t \geb 0, B_t=1 \}$, where $B$ is a Brownian motion. Besides, $\frac{Z_T}{\log(T)}=\frac{2N_T}{\log(T)}$ converges in probability to $d$, and we have \begin{equation} \forall \mu>0, \sup_{T\geb 2} \E\left[ \exp\left(  \frac{\mu }{ {\log(T)}} N_T \right) \right] < \infty.\label{ui2}
\end{equation}
\end{lemme}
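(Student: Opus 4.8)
The plan is to exploit the scaling structure of the driftless ($b=0$) Wishart process through two complementary devices. The first is the self-similar scaling $X_T/T\Rightarrow X_1^0$, which follows from the explicit Laplace transform of Proposition~\ref{prop:lap_Wish} (it is exactly the convergence established around~\eqref{lim_A_T}) and controls $Z_T=\log(\det X_T/\det x)$. The second is the Lamperti-type change of time $Y_r:=e^{-r}X_{e^r}$, which controls the clock $Q_T^{-1}=\int_0^T\Tr[X_s^{-1}]ds$. A direct Itô computation shows that $(Y_r)_{r\geb 0}$ is again a Wishart process, now with mean-reverting drift $b=-\tfrac12 I_d$, so that $-(b+b\tp)=I_d\in\dpos$ and $Y$ is ergodic by Lemma~\ref{lem_ergo}, with invariant law $WIS_d(0,\alpha,0,\sqrt2\,I_d;1/2)$. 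The substitution $s=e^r$ then yields the two key identities $Q_T^{-1}=\int_0^{\log T}\Tr[Y_r^{-1}]dr+O(1)$ and $\log\det X_T=d\log T+\log\det Y_{\log T}$, which transfer the asymptotics of $X$ at time $T$ into ergodic averages of $Y$ over the window $[0,\log T]$.

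For the first bullet ($\alpha>d+1$), Birkhoff's theorem applied to the ergodic process $Y$ gives $\frac1R\int_0^R\Tr[Y_r^{-1}]dr\to\E_\theta[\Tr[Y_\infty^{-1}]]$ a.s.; using the stationary relation $\alpha I_d+b\,\E_\theta[Y_\infty]+\E_\theta[Y_\infty]b\tp=0$ to get $\E_\theta[Y_\infty]=\alpha I_d$, together with the inverse–Wishart mean formula, this limit equals $\frac{d}{\alpha-(d+1)}$, whence $\frac{Q_T^{-1}}{d\log T}\to\frac1{\alpha-(d+1)}$ by taking $R=\log T$. For $Z_T$, I would use \eqref{eq_Z} in the form $Z_T=(\alpha-1-d)Q_T^{-1}+2N_T$ with $\langle N\rangle_T=Q_T^{-1}\sim c\log T$: by the Dambis–Dubins–Schwarz representation $N_T=\beta_{Q_T^{-1}}$ and the law of the iterated logarithm, $N_T=o(\log T)$ a.s., so $\frac{Z_T}{\log T}\to(\alpha-1-d)\frac{d}{\alpha-1-d}=d$ a.s.

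For the second bullet ($\alpha=d+1$) the same $Y$ is ergodic, but now $\E_\theta[\Tr[Y_\infty^{-1}]]=+\infty$, which is precisely why the clock is super-linear, $Q_T^{-1}\asymp\log^2 T$, and the limit is random. Here $\alpha-1-d=0$, so \eqref{eq_Z} degenerates to $Z_T=2N_T$; combined with $X_T/T\Rightarrow X_1^0$ this gives $\frac{Z_T}{\log T}=\frac{2N_T}{\log T}\to d$ in probability. To identify the limit law of $Q_T^{-1}/\log^2 T$ I would compute its Laplace transform directly: tilting by the degree-change density $\exp\!\big(\tfrac{\mu}{\log T}N_T-\tfrac{\mu^2}{2\log^2 T}Q_T^{-1}\big)$ of Theorem~4.1 in~\cite{MayerhoferHammamet} turns the computation into a Wishart Laplace transform of $(X_T,R_T)$ at degree $d+1+\tfrac{2\mu}{\log T}$, evaluable by Proposition~\ref{prop:lap_Wish}; letting $T\to\infty$ and replacing $\tfrac{N_T}{\log T}$ by its limit $\tfrac d2$ (justified by \eqref{ui2}) yields $\E_\theta[\exp(-\tfrac{\mu^2}{2}\tfrac{Q_T^{-1}}{\log^2 T})]\to e^{-\mu d/2}$. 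Since $\E[\exp(-\tfrac{\theta^2}2\tau_1)]=e^{-\theta}$, the choice $\theta=\mu d/2$ identifies the limit as $(\tfrac d2)^2\tau_1$ in law.

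The delicate point, and the real obstacle, is the uniform integrability in \eqref{ui1} and \eqref{ui2}. Bound \eqref{ui2} is the easy one: at $\alpha=d+1$ one has $N_T=\tfrac12 Z_T$, hence $\E_\theta[\exp(\tfrac{\mu}{\log T}N_T)]=e^{\mu d/2}\,\E_\theta\big[(\det(X_T/T))^{\mu/(2\log T)}(\det x)^{-\mu/(2\log T)}\big]$, which is bounded because the determinant moments of $X_T/T$ are uniformly bounded near exponent $0$. Bound \eqref{ui1} cannot be obtained this way: there $N_T=\tfrac12 Z_T-\tfrac{\alpha-1-d}2Q_T^{-1}$, and simply discarding the nonnegative clock term produces $\E_\theta[(\det X_T)^{\mu/(2\sqrt{\log T})}]\sim\exp(\tfrac{\mu d}2\sqrt{\log T})\to\infty$, so the two diverging pieces must be kept together. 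I would instead use the exact identity $\E_\theta[\exp(\tfrac{\mu}{\sqrt{\log T}}N_T)]=\overline{\E}[\exp(\tfrac{\mu^2}2\tfrac{Q_T^{-1}}{\log T})]$, where $\overline\Px$ has degree $\alpha+\tfrac{2\mu}{\sqrt{\log T}}>d+1$, and then control the exponential moment of the normalized clock under $\overline\Px$ via $Q_T^{-1}\leb\frac{Z_T+2|\overline N_T|}{\alpha-1-d}$ coming from \eqref{eq_Z}: the $Z_T$ contribution is handled by determinant moments as above, while the martingale term $\overline N_T$ is handled by an exponential-martingale (Cauchy–Schwarz) inequality whose feedback coefficient is $O(1/\log T)$ and therefore closes the estimate after one bootstrap step. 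Making these bounds rigorous, uniformly in $T\geb 2$ and for every $\mu>0$, is the main work; the remaining statements then follow by Slutsky's theorem as above.
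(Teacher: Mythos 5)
Your proposal is correct in substance and follows the paper's skeleton for most of the lemma: the exponential time change $Y_r=e^{-r}X_{e^r}$ turning $X$ into an ergodic Wishart process with drift $-\tfrac12 I_d$ (the paper uses $Y_r=e^{-r}X_{e^r-1}$, which makes your $O(1)$ correction in the clock identity disappear), equation~\eqref{eq_Z}, Dubins--Schwarz, Mayerhofer's exponential-martingale identity for identifying $\tau_1$ in the case $\alpha=d+1$, and determinant-moment bounds for~\eqref{ui2}; these parts are essentially the paper's own argument. You deviate genuinely in two places. First, for the $\alpha>d+1$ almost sure limits you compute $\E_\theta[\Tr[Y_\infty^{-1}]]=\tfrac{d}{\alpha-(d+1)}$ from the inverse-Wishart mean formula, get the clock limit directly from Birkhoff, and then deduce $Z_T/\log T\to d$ from~\eqref{eq_Z} plus the law of the iterated logarithm; the paper proceeds in the opposite order and never invokes the inverse-Wishart mean: it extracts the value of $(\alpha-1-d)\E_\theta[\Tr[Y_\infty^{-1}]]$ by matching the a.s.\ limit of the right-hand side of It\^o's formula for $\tfrac1t\log\det Y_t$ with the probability limit $0$ of its left-hand side, so $Z_T$ comes first and $Q_T^{-1}$ second via Dubins--Schwarz. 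Your route is shorter but imports an external distributional fact; the paper's is self-contained. Second, and more substantively, for~\eqref{ui1}: you correctly diagnose that $N_T\leb Z_T/2$ only yields finiteness for each fixed $T$, not uniformity, and you propose the exact tilt $\E_\theta[e^{\mu N_T/\sqrt{\log T}}]=\overline{\E}[e^{\frac{\mu^2}{2\log T}Q_T^{-1}}]$ (degree $\alpha+\tfrac{2\mu}{\sqrt{\log T}}$ under $\overline{\Px}$), then bound the clock under the tilted law through~\eqref{eq_Z}, Cauchy--Schwarz and an exponential supermartingale, the feedback exponent $O(1/\log^2 T)$ being absorbed into the original $O(1/\log T)$ one. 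The paper instead writes $N_T$ as a convex combination $\Lambda_t(\text{stochastic integral})+(1-\Lambda_t)(\text{It\^o representation of }\tfrac12\log\det Y_t)$ and tunes $\Lambda_t$ so that, after Cauchy--Schwarz, the martingale factor has expectation exactly one by Mayerhofer's theorem, the determinant factor staying bounded by stationarity. Both mechanisms work; yours is more systematic and would adapt to other tilt rates, the paper's is a one-shot optimization. The one point you must nail down in your bootstrap is a priori finiteness of $\overline{\E}[e^{\frac{\mu^2}{2\log T}Q_T^{-1}}]$: a self-improving inequality of the form $\phi\leb C\phi^{1/4}$ is vacuous if $\phi=+\infty$. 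This finiteness is exactly what your ``discarded clock'' bound supplies for each fixed $T$, and the range $T\in[2,T_0(\mu)]$ where the feedback comparison fails must be covered by that crude bound; you should also note that the secondary tilt keeps the degree above $d+1$ for $T$ large. None of this is an obstruction.
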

We mention that the results on the convergence for $Q_T$ are given in Donati-Martin et al.~\cite{DDMY2003}. However, their proofs is in a working paper by the same authors that we have not been able to find. For this reason, we present here an autonomous proof. 
\begin{proof}
We first consider the case $\alpha>d+1$. We have $dX_t=\alpha I_d dt + \sqrt{X_t}dW_t + dW_t\tp \sqrt{X_t}$ and thus
$$d(e^{-t}X_{e^{t}-1})=[\alpha I_d -e^{-t}X_{e^{t}-1}]dt+\sqrt{e^{-t}X_{e^{t}-1}}d\tilde{W}_t+d\tilde{W}_t\tp  \sqrt{e^{-t}X_{e^{t}-1}},$$
with $d\tilde{W}_t= e^{-t/2}d(W_{e^{t}-1})$. 
We observe that $\tilde{W}$ is a matrix Brownian motion, which gives $Y \sim WIS_d(x,\alpha,-I_d/2,I_d)$, where $Y_t=e^{-t}X_{e^{t}-1}$ for $t\geb 0$. Using equation~\eqref{eq_Z} to the process $Y$, we get
\begin{equation}\label{log_detY} 
\frac{1}{t} \log\left( \frac{\det[Y_t] }{\det[Y_0]}\right) = (\alpha-1-d)\frac{1}{t}\int_0^t \Tr[Y^{-1}_s] ds -d +\frac{2}{t} \int_0^t\Tr[\sqrt{Y^{-1}_s}d\tilde{W}_s].
\end{equation}
Since~$Y$ is ergodic and $\langle \int_0^t\Tr[\sqrt{Y^{-1}_s}d\tilde{W}_s] \rangle=\int_0^t \Tr[Y^{-1}_s] ds$,
 we get that the left hand side converges in probability to zero and the right hand side converges a.s. to $(\alpha-1-d) \E[\Tr[Y^{-1}_\infty]]  -d$, where $Y_\infty\sim WIS_d(0,\alpha,0,\sqrt{2}I_d;1/2)$ is the stationary law of $Y$. Therefore, $\frac{1}{t} \log\left( \frac{\det[Y_t] }{\det[Y_0]}\right)$ converges a.s. to zero. Since $\frac{1}{t} \log\left( \frac{\det[Y_t] }{\det[Y_0]}\right)=\frac{1}{t} \log\left( \frac{\det[e^{-t}X_{e^{t}-1}] }{\det[x]}\right)=\frac{1}{t} \log\left( \frac{\det[X_{e^{t}-1}] }{\det[x]}\right)-d$, we get that $\frac{Z_T}{\log(T)}=\frac{1}{\log(T)} \log\left( \frac{\det[X_{T}] }{\det[x]} \right)$ converges a.s. to $d$ when $T\rightarrow +\infty$. 

Now, we use~\eqref{eq_Z} taken at time $T=e^t-1$ and Dubins-Schwarz theorem: there is a Brownian motion $\beta$ such that for all $t\geb 0$,
$$\frac{\alpha-(1+d)}{Q_{e^t-1}t}+\frac{2\beta_{Q^{-1}_{e^t-1}}}{t}=\frac{1}{t}\log\left( \frac{\det[X_{e^{t}-1}]}{\det[x]} \right).$$
This gives that $\frac{\alpha-(1+d)}{Q_{e^t-1}t}\underset{t\rightarrow +\infty}{\rightarrow}  d$ a.s., and therefore $\frac{Q_T^{-1}}{d \log(T)} \underset{T\rightarrow +\infty}{\rightarrow} \frac{1}{\alpha -(d+1)}$, a.s. 

It remains to prove~\eqref{ui1}. From~\eqref{eq_Z}, we have $N_T=\frac{Z_T}{2}-\frac{\alpha-1-d}{2}Q_T^{-1}\le\frac{Z_T}{2}$ and thus  $\E\left[ \exp\left(  \frac{\mu }{ \sqrt{\log(T)}} N_T \right) \right]\leb \E\left[\left( \frac{\det[X_{T}] }{\det[x]} \right)^{\frac{\mu }{ 2 \sqrt{\log(T)}}} \right]< \infty$, since the moments of~$X$ are bounded. Again we set $t=\log(T+1)$, and for $\Lambda \in [0,1]$, we have from~\eqref{log_detY}
\begin{align*}
N_T&= \int_0^T\Tr[\sqrt{X^{-1}_s}dW_s] = \int_0^t\Tr[\sqrt{Y^{-1}_s}d\tilde{W}_s]\\
&=\Lambda \int_0^t\Tr[\sqrt{Y^{-1}_s}d\tilde{W}_s]  +(1-\Lambda)\left(\frac{1}{2}\log\left( \frac{\det[Y_{t}] }{\det[x]} \right) +\frac{d}{2} t -\frac{\alpha-1-d}{2} \int_0^t \Tr[Y_s^{-1}]ds \right).
\end{align*}
By Cauchy-Schwarz inequality, we get
\begin{align*}
& \E\left[ \exp\left(  \frac{\mu }{ \sqrt{\log(T+1)}} N_T \right) \right] \\
\le& 
e^{\frac{\mu d (1-\Lambda)}{2} \sqrt{t}}
\E^{\frac{1}{2}}\left[ \left( \frac{\det[Y_{t}] }{\det[x]}  \right)^{(1-\Lambda)\frac{\mu}{\sqrt{t}}} \right]
 \\
&\times \E^{\frac{1}{2}}\left[ \exp\left( \frac{ 2 \mu \Lambda}{\sqrt{t}} \int_0^t\Tr[\sqrt{Y^{-1}_s}d\tilde{W}_s] - \mu(1-\Lambda)\frac{\alpha-1-d}{\sqrt{t}} \int_0^t \Tr[Y_s^{-1}]ds \right) \right].
\end{align*}
We now take $\Lambda=\Lambda_t=\frac{1}{2 \epsilon_t}\left(-1+\sqrt{1+4\epsilon_t}\right)$ with $\epsilon_t=\frac{2\mu}{(\alpha-1-d)\sqrt{t}}$ in order to obtain\linebreak $\frac12 \left(\frac{ 2 \mu \Lambda_t}{\sqrt{t}}  \right)^2=\mu(1-\Lambda_t)\frac{\alpha-1-d}{\sqrt{t}}.$ We note that for $t$ large enough, $\Lambda_t\in [0,1]$. Besides, we have $\Lambda_t\underset{t\rightarrow +\infty}{=}1-\epsilon_t + o(1/t)$, so that $\sqrt{t}(1-\Lambda_t)$ converges to  $\frac{2 \mu}{\alpha-1-d}$. From Theorem~4.1 in~\cite{MayerhoferHammamet}, the second expectation is then equal to~$1$, while the first one is bounded since $Y$ is ergodic. This yields to~\eqref{ui1}.

We now consider the case $\alpha=d+1$. We set again $t=\log(1+T)$ and have $T=e^t-1$. Thus, 
$$Z_T =\log\left( \frac{\det[X_T]}{\det[x]} \right)=\log\left( \frac{\det[e^tY_t]}{\det[x]} \right)=\log\left( \frac{\det[Y_t]}{\det[x]} \right)+dt.$$
Again,  $Y_t$ converges in law towards $WIS_d(0,\alpha, 0, \sqrt{2}I_d;1/2)$. Therefore, the ergodic theorem gives that $\frac{1}{t}\log\left( \frac{\det[Y_t]}{\det[x]} \right)$ converges in probability to $0$, which yields to the convergence in probability of $\frac{Z_T}{\log(T)}$ to $d$. 
We now turn to the convergence of $\left(\frac{2}{d \log(T)} \right)^2 Q_T^{-1}$. We know from Theorem 4.1 in Mayerhofer~\cite{MayerhoferHammamet} that for $T>0$ and $\lambda\geb 0$,
$$\E\left[ \exp\left( \frac{2 \lambda}{d \log(1+T)}N_T- \frac{(2\lambda)^2}{2 d^2\log(1+T)^2} Q_T^{-1} \right) \right]=1.$$
From~\eqref{eq_Z}, we have $N_T=Z_T/2$ and we write
\begin{align*}
1=&\E\left[ \exp\left( \lambda - \frac{(2\lambda)^2}{2 d^2\log(1+T)^2} Q_T^{-1} \right) \right] \\&+ \E\left[ \exp\left(- \frac{(2\lambda)^2}{2 d^2\log(1+T)^2} Q_T^{-1}\right) \left( \exp\left( \frac{2 \lambda}{d \log(1+T)}N_T \right)- \exp(\lambda) \right) \right]
\end{align*}
We now observe that $\exp\left(- \frac{(2\lambda)^2}{2 d^2\log(1+T)^2} Q_T^{-1}\right) \leb 1$ and that 
$$ \E\left[   \exp\left( \frac{2 \lambda}{d \log(1+T)}N_T \right)  \right] = \E\left[  \left(  \frac{\det[X_T]}{\det[x]}\right)^{ \frac{ \lambda}{d \log(1+T)}}  \right]= e^{ \lambda} \E\left[  \left(  \frac{\det[Y_t]}{\det[x]}\right)^{ \frac{ \lambda}{d t}}  \right]. $$
Since $Y$ has bounded moments and is stationary, $\sup_{t\geb 1}\E\left[  \left(  \frac{\det[Y_t]}{\det[x]}\right)^{ \frac{ \lambda}{d t}}  \right]< \infty $. This gives the uniform integrability~\eqref{ui2} and that $$\E\left[ \exp\left(- \frac{(2\lambda)^2}{2 d^2\log(1+T)^2} Q_T^{-1}\right) \left( \exp\left( \frac{2 \lambda}{d \log(1+T)}N_T \right)- \exp(\lambda) \right) \right] \underset{T\rightarrow +\infty}\rightarrow 0.$$

Therefore, $\lim_{T\rightarrow +\infty} \E\left[ \exp\left( \lambda - \frac{(2\lambda)^2}{2 d^2\log(1+T)^2} Q_T^{-1} \right) \right]=1$, which gives the desired convergence in law.
\end{proof}

\noindent {\bf Acknowledgements.} The authors would like to thank Arnaud Gloter  (University of Evry) and Marina Kleptsyna (University of Le Mans) for helpful discussions, and the two anonymous referees for their fruitful comments. 

\bibliography{ref_MLE_Wishart}
\bibliographystyle{abbrv} 

\end{document}